\documentclass[11pt,letterpaper]{amsart}

\usepackage[margin=1.08in]{geometry}
\usepackage{amsmath,amssymb,amsthm,mathtools}
\usepackage{microtype}
\usepackage[T1]{fontenc}
\usepackage{libertinus}
\usepackage{enumitem}
\usepackage{booktabs}
\usepackage{xcolor}
\definecolor{black}{RGB}{0,0,0}
\usepackage{array}
\usepackage{tikz}
\usetikzlibrary{decorations.pathreplacing,arrows.meta,positioning}
\usepackage{cite}
\usepackage{hyperref}
\usepackage{aliascnt}
\usepackage[nameinlink,capitalize]{cleveref}

\hypersetup{
  colorlinks=true,
  linkcolor=black,
  citecolor=black,
  urlcolor=black,
  pdftitle={The Bombieri--Weyl Contractivity Threshold},
  pdfauthor={Daniel Nunez-Alarcon, Daniel M. Pellegrino, Anselmo Raposo Jr., and Eduardo V. Teixeira}
}
\setlist{itemsep=0.25em,topsep=0.45em}
\numberwithin{equation}{section}
\allowdisplaybreaks

\newaliascnt{proposition}{theorem}
\newtheorem{proposition}[proposition]{Proposition}
\aliascntresetthe{proposition}

\newaliascnt{lemma}{theorem}
\newtheorem{lemma}[lemma]{Lemma}
\aliascntresetthe{lemma}

\newaliascnt{corollary}{theorem}
\newtheorem{corollary}[corollary]{Corollary}
\aliascntresetthe{corollary}

\newtheorem{maintheorem}{Theorem}

\theoremstyle{definition}
\newaliascnt{definition}{theorem}

\aliascntresetthe{definition}

\theoremstyle{remark}
\newaliascnt{remark}{theorem}

\aliascntresetthe{remark}

\crefname{theorem}{theorem}{theorems}
\Crefname{theorem}{Theorem}{Theorems}
\crefname{proposition}{proposition}{propositions}
\Crefname{proposition}{Proposition}{Propositions}
\crefname{lemma}{lemma}{lemmas}
\Crefname{lemma}{Lemma}{Lemmas}
\crefname{corollary}{corollary}{corollaries}
\Crefname{corollary}{Corollary}{Corollaries}
\crefname{definition}{definition}{definitions}
\Crefname{definition}{Definition}{Definitions}
\crefname{remark}{remark}{remarks}
\Crefname{remark}{Remark}{Remarks}
\crefname{maintheorem}{theorem}{theorems}
\Crefname{maintheorem}{Theorem}{Theorems}

\newcommand{\NN}{\mathbb{N}}
\newcommand{\CC}{\mathbb{C}}
\newcommand{\TT}{\mathbb{T}}
\newcommand{\DD}{\mathbb{D}}
\newcommand{\e}{\mathrm e}
\newcommand{\supp}{\operatorname{supp}}
\newcommand{\pat}{\operatorname{pat}}

\title[The Bombieri--Weyl contractivity threshold]
{The Bombieri--Weyl Contractivity Threshold}

\author[D. N\'u\~nez-Alarc\'on]{Daniel N\'u\~nez-Alarc\'on}
\address{Department of Mathematics, Universidad Nacional de Colombia, Bogot\'a, Colombia}
\email{Dnuneza@unal.edu.co}
\author[D. M. Pellegrino]{Daniel M. Pellegrino}
\address{Department of Mathematics, Universidade Federal da Para\'iba, Jo\~ao Pessoa, PB, Brazil}
\email{daniel.pellegrino@academico.ufpb.br}
\author[A. Raposo Jr.]{Anselmo Raposo Jr.}
\address{Department of Mathematics, Federal University of Maranh\~ao, S\~ao Lu\'is, MA, Brazil}
\email{anselmo.junior@ufma.br}
\author[E. V. Teixeira]{Eduardo V. Teixeira}
\address{Department of Mathematics, Oklahoma State University, Stillwater, OK, USA}
\email{eduardo.teixeira@okstate.edu}

\subjclass[2020]{Primary 46G25; Secondary 32A05, 46B70}
\keywords{{\color{black}Bombieri--Weyl norm, contractivity threshold, orbit compression, homogeneous polynomials, Hardy--Littlewood inequality, Bohnenblust--Hille inequality, symmetric multilinear forms}}

\begin{document}

\begin{abstract}
For all sufficiently large degrees $m$, we prove that the Bombieri--Weyl
norm of a complex $m$-homogeneous polynomial is bounded by its supremum
norm on the unit ball of $\ell_p^n$ whenever $p\ge2m+A$, with constant
one in every dimension and $A$ absolute. At the critical exponent
$p=2m$, our coefficient estimates yield the quantitative bound
$1+O(m^{-1})$ for the optimal comparison constant. Together with the
elementary obstruction to dimension-free boundedness below $2m$,
these results locate the contractivity threshold at $2m+O(1)$. The proof combines compression of permutation orbits with a Wiener coefficient estimate, after separating the coordinate pure powers.
{\color{black}The same estimates yield constant-one symmetric multilinear
Hardy--Littlewood and Bohnenblust--Hille inequalities with the stronger
diagonal norm on the right-hand side; equality occurs precisely for coordinate
pure powers.}
\end{abstract}

\maketitle

\tableofcontents
\medskip

\section{Introduction and main results}

{\color{black}
The Bombieri--Weyl norm is a classical Hilbertian norm on spaces of
homogeneous polynomials.  It is also called the Weyl norm or Bombieri norm in
different parts of the literature.  This terminology reflects two classical
strands of the subject: the invariant Hilbertian geometry associated with the
Weyl norm, and the use of the same coefficient structure in Bombieri-type
inequalities for products of polynomials.  The norm has since become standard
also in the study of polynomial systems and condition numbers; see, for
instance,
\cite{BeauzamyBombieriEnfloMontgomery,ShubSmaleBezoutI}. {\color{black}For further aspects of Bombieri--Weyl geometry, extremal products, differential identities, polynomial-system conditioning, and related approximation questions, see also \cite{BeauzamyExtremalBombieri,BeauzamyDegot,BeltranShubBombieriWeyl,BeltranEtayoMarzoOrtega,DiattaLerarioBombieriWeyl}.}

{\color{black}We write $\TT:=\{z\in\CC:|z|=1\}$ and $\DD:=\{z\in\CC:|z|<1\}$.}

Let
\[
 P:\CC^n\longrightarrow\CC,\qquad
 P(z)=\sum_{|\alpha|=m}b_\alpha z^\alpha
\]
be an $m$-homogeneous polynomial. Here
$\alpha=(\alpha_1,\ldots,\alpha_n)\in\mathbb N_0^n$, where $\mathbb N_0:=\{0,1,2,\ldots\}$, and we use the standard
notation
\[
 |\alpha|=\alpha_1+\cdots+\alpha_n,\qquad
 \alpha!=\alpha_1!\cdots\alpha_n!,\qquad
 z^\alpha=z_1^{\alpha_1}\cdots z_n^{\alpha_n}.
\]
Its Bombieri--Weyl norm is
\[
 \|P\|_{\mathrm{BW}}
 :=\left(\sum_{|\alpha|=m}\frac{\alpha!}{m!}|b_\alpha|^2\right)^{1/2}.
\]

A different line of questions concerns comparisons between coefficient norms
and supremum norms.  In this direction, Galicer, Mansilla, and Muro
\cite{GalicerMansillaMuro} studied the optimal constants relating coefficient
norms of homogeneous polynomials to
\[
 \|P\|_p:=\sup_{\|z\|_p\le1}|P(z)|,\qquad 1\le p\le\infty,
\]
and developed a systematic study of their dimensional asymptotics in a
broad range of parameters.  Their framework also includes the Bombieri
$r$-norm and provides a natural background for the question considered here.
At the Hilbertian value $r=2$, where this norm is precisely the
Bombieri--Weyl norm, we pursue the finer problem of contractivity: for which
$p$ does
\[
 \|P\|_{\mathrm{BW}}\le \|P\|_p
\]
hold for every $m$-homogeneous polynomial $P$, with constant one independently
of the dimension?

There is a sharp obstruction below $2m$.  For
$P_n(z)=z_1^m+\cdots+z_n^m$ one has $\|P_n\|_{\mathrm{BW}}=\sqrt n$, while
{\color{black}$\|P_n\|_p=n^{1-m/p}$ for $p\ge m$, whereas $\|P_n\|_p=1$ for $p<m$.}  Hence no dimension-free estimate
$\|P\|_{\mathrm{BW}}\le C\|P\|_p$ can hold for $p<2m$; compare
\cite[Theorem~2.1]{GalicerMansillaMuro}.

Accordingly, let ${\mathcal P}_m(\CC^n)$ denote the space of complex
$m$-homogeneous polynomials on $\CC^n$, and define
\[
 \mathcal C_{m,p}^{\mathrm{BW}}
 :=\sup_{n\ge1}\ \sup_{0\ne P\in\mathcal P_m(\CC^n)}
 \frac{\|P\|_{\mathrm{BW}}}{\|P\|_p}.
\]
{\color{black}Our main result identifies the contractive regime up to a bounded window
above the critical value $2m$ and shows asymptotic contractivity at the
critical point.}
}

\begin{maintheorem}[Bombieri--Weyl contractivity]
\label{thm:Bombieri-Weyl-threshold}
{\color{black}
There is an absolute constant $C>0$ such that, for every $m\ge1$,
\[
  1\le \mathcal C_{m,2m}^{\mathrm{BW}}\le1+\frac{C}{m}.
\]
Moreover, there are absolute constants $A>0$ and $m_0\in\NN$ such that,
for every $m\ge m_0$ and $p\ge2m+A$,
\[
  \mathcal C_{m,p}^{\mathrm{BW}}=1.
\]
{\color{black}In this contractive regime, equality is rigid: a nonzero polynomial $P$
satisfies $\|P\|_{\mathrm{BW}}=\|P\|_p$ if and only if
\[
 P(z)=c z_j^m
\]
for some $j\in\{1,\ldots,n\}$ and $c\in\CC\setminus\{0\}$.}
}
\end{maintheorem}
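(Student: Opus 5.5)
The lower bound $\mathcal C^{\mathrm{BW}}_{m,2m}\ge1$ is immediate from $P=z_1^m$: both norms equal one. For the upper bounds I will follow the strategy announced in the abstract. Write $P=D+R$. Here $D(z)=\sum_j b_{m e_j}z_j^m$ is the part made of coordinate pure powers, and $R$ collects all monomials with at least two distinct variables. Then
\[
\|P\|_{\mathrm{BW}}^2=\sum_j|b_{me_j}|^2+\sum_{\alpha\ \text{mixed}}\frac{\alpha!}{m!}|b_\alpha|^2 .
\]
The two pieces are estimated separately, each against $\|P\|_p$.

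\emph{Pure powers.} Fix $z$ in the $\ell_p$ unit ball, with $|z_j|=r_j$, and average $P(\omega_1 z_1,\dots,\omega_n z_n)$ over $\omega\in\TT^n$ against suitable characters. This is a Wiener/Cauchy-type estimate. It should give the $\ell_2$ bound
\[
\Bigl(\sum_j|b_{me_j}|^2 r_j^{2m}\Bigr)^{1/2}\le\|P\|_p .
\]
Choosing $r_j^{m}\propto|b_{me_j}|^{\,s}$ with $\sum r_j^p=1$ turns this into $\|(b_{me_j})_j\|_{q}\le\|P\|_p$, where $q$ is dual-type to $p/m$. For $p\ge2m$ this dominates $\|(b_{me_j})\|_{2}$. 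However, mixing in $R$ must be controlled, so I will use the sharper Wiener form
\[
|b_{me_j}|^2+\sum_{\beta\neq me_j}|b_\beta|^2(\cdots)\le\|P\|_\infty^2 ,
\]
coming from Parseval on $\TT^n$.

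\emph{Mixed orbits.} Group the mixed coefficients by permutation orbit, i.e.\ by the multiset pattern $\pat(\alpha)$. For each pattern I compress the orbit: restricting $P$ to a suitably chosen point of the $\ell_p$ ball with equal moduli on the support and random unimodular phases, then applying Parseval, shows the following. The contribution of each orbit to $\|P\|_{\mathrm{BW}}^2$ is bounded by $\|P\|_p^2$ times a weight of the form
\[
\frac{\alpha!}{m!}\,k^{2m/p},
\]
where $k$ is the support size, after optimizing over the support. For a mixed pattern the multinomial factor $\alpha!/m!\le 1/m$ already, and decays further, like $(m/k)^{-\cdots}$, as the pattern spreads. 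When $p\ge2m+A$, the extra factor $k^{2m/p-1}\le k^{-A/(2m+A)}$ gives summability over patterns. The task is then to show that the total mixed contribution plus the pure-power contribution is at most $\|P\|_p^2$. The natural device is a convex combination of the two estimates: the pure-power Wiener bound has slack exactly equal to the mixed Parseval mass. The amount of slack needed is $\sum_{\text{mixed}}\frac{\alpha!}{m!}|b_\alpha|^2\cdot(\text{pattern weight})$, and summing the pattern weights over all partitions of $m$ with at least two parts should give $O(1/m)$.

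\emph{Conclusion.} At $p=2m$ the leftover is a multiplicative $1+O(1/m)$, which yields $\mathcal C^{\mathrm{BW}}_{m,2m}\le1+C/m$. For $p\ge2m+A$ with $m\ge m_0$, the decay $k^{-A/(2m+A)}$ together with the $1/m$ multinomial gain makes the total weight strictly less than the available slack, giving constant one. Rigidity follows from this strictness. If $R\ne0$, some inequality in the chain is strict, so equality forces $R=0$. Once $R=0$, equality in $\|(b_{me_j})\|_2\le\|(b_{me_j})\|_q$ with $q<2$ forces a single nonzero coordinate, so $P=cz_j^m$.

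The main obstacle is the orbit-compression bookkeeping. It requires bounding the sum over all partitions of $m$, with arbitrary supports in arbitrary dimension $n$, of the weights by something like $C/m$ uniformly in $n$. Patterns with many parts, $k\sim m$, are the delicate case, because there $\alpha!/m!$ is tiny but the orbit sizes are huge. I would first try to handle them by Bohnenblust–Hille-type hypercontractive estimates on $\TT^n$. These replace the counting over orbit elements by an $\ell_{2m/(m+1)}$-type bound, with constants subexponential in $m$, and that bound is then absorbed by the factor $\alpha!/m!$.
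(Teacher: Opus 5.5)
Your proposal has a genuine gap at its core. Every estimate you use is an $L^2(\TT^n)$ (Parseval) bound at a point of the $\ell_p$ ball, and for finite $p$ such bounds are not dimension-free.

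\emph{Pure powers.} From $\sum_j|b_{me_j}|^2r_j^{2m}\le\|P\|_p^2$ with $\sum_j r_j^p\le1$, optimizing over $r$ gives only $\|(b_{me_j})_j\|_{2p/(p-2m)}\le\|P\|_p$. Since $2p/(p-2m)\ge2$, this is weaker than the $\ell_2$ bound you need, not stronger. At $p=2m$ it reduces to $\max_j|b_{me_j}|\le\|P\|_{2m}$. For $P=z_1^m+\cdots+z_n^m$ the Parseval side is at most $1$, while $\|P\|_{2m}^2=\|(b_{me_j})_j\|_2^2=n$. The correct bound uses the supremum over phases, not their average. Averaging over coordinatewise $m$th roots of unity is a contractive projection $P\mapsto P_{\mathrm{diag}}$. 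Since $z\mapsto(z_j^m)_j$ maps the $\ell_p$ ball onto the $\ell_{p/m}$ ball, duality gives $\|P_{\mathrm{diag}}\|_p=\|(b_{me_j})_j\|_{p/(p-m)}$ with $p/(p-m)\le2$.

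\emph{Mixed part.} This fails for the same reason, and more seriously. A bound $|b_\alpha|^2\le k^{2m/p}\|P\|_p^2$ controls one coefficient, but a pattern with $r$ parts is carried by order $n^r$ multi-indices. Summing ``pattern weights over partitions of $m$'' silently drops that sum; you are conflating a permutation orbit $\mathcal O_\alpha$, which is a single $\alpha$, with a pattern $\pat(\alpha)$. No averaging of Parseval over points of the ball can repair this. Already for $m=2$, $p=4$ you would need a probability measure $\mu$ on the ball with $\int|z_iz_j|^2\,d\mu\ge c>0$ for all $i<j$, whereas $\sum_{i<j}|z_iz_j|^2\le\frac12\|z\|_2^4\le\frac n2$ there.

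What is needed is a genuinely multilinear, dimension-free inequality whose constant depends on the number of parts $r$ rather than on $m$. The paper builds it in four steps:
\begin{enumerate}
\item a contractive coloring projection $\mathcal F_{\chi,\kappa}$ reduces each pattern to an $r$-linear form on $\ell_{p/\kappa_1}\times\cdots\times\ell_{p/\kappa_r}$ of norm at most $r^{m/p}\|P\|_p$;
\item the Hardy--Littlewood inequality applies with constant $2^{(r-1)/2}$;
\item averaging over random colorings accounts for the survival probability $\pi_\kappa$;
\item the inverse-multinomial lemma sums everything to $O(1/m)$.
\end{enumerate}
Your fallback does not substitute for this. Bohnenblust--Hille on $\TT^n$ is an $\ell_\infty$ tool, and at $p=\infty$ the theorem is already trivial, since $\|P\|_{\mathrm{BW}}^2\le\sum_\alpha|b_\alpha|^2\le\|P\|_\infty^2$. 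Applied to the whole polynomial with a subexponential constant $D_m$, it gives a contribution of order $D_m^2/m$ for the pattern $(m-1,1)$, whose weight is only $\alpha!/m!=1/m$. That is not $O(1/m)$.

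\emph{Constant one.} The mechanism here is also missing. Your ``slack equals the mixed Parseval mass'' is the $L^2$ identity at $p=\infty$. For finite $p$ the diagonal slack comes only from the gap between the exponent $2$ (or $q_{m,p}$) and $s_{m,p}=p/(p-m)$. That gap vanishes at $p=2m$, is only of order $(p-2m)/m$ just above it, and tends to zero as $P\to z_1^m$. So a uniform $O(1/m)$ bound on the mixed mass cannot be absorbed near a pure power.

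The paper splits on $h=\max_j|b_{me_j}|$:
\begin{itemize}
\item If $h\le h_0$, the diagonal deficit is at least $(1-h_0)\beta$, which beats $C/m$ once $\beta\ge B/m$. This is exactly where $p\ge2m+A$ enters.
\item If $h>h_0$, apply Wiener's estimate $|a_k|\le1-|a_0|^2$ to the weighted slices $\zeta\mapsto P((1-u)^{1/p},u^{1/p}\zeta x)$. This gives $\|P_d\|_p\le2\sqrt2\,\delta^{1-d/p}$ with $\delta=1-h^2$. The mixed mass is then $O(m^{-1}\delta^{1+\beta})$, against a diagonal deficit of at least $\beta\delta/2$.
\end{itemize}

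Your rigidity step is fine once a strict chain is available. The paper obtains it more directly from equality in $\|x\|_2\le\|x\|_{q_{m,p}}$, with $q_{m,p}<2$, for the ordered coefficient array $x=a(\check P)$.
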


{\color{black}The complex field is essential here.  Over the real field the Bombieri--Weyl
comparison fails to be contractive even at $p=\infty$, already in two variables.
The final section gives the elementary obstruction $(x^2-y^2)^{m/2}$, whose
Bombieri--Weyl norm grows like $m^{1/4}$.}

{\color{black}To formulate the coefficient estimate behind this result, recall the
classical Bombieri $q$-norm}
\begin{equation}\label{eq:orbit-norm-definition}
 \Lambda_{m,q}(P)^q
 :=\sum_{|\alpha|=m}
 \left(\frac{m!}{\alpha!}\right)^{1-q}|b_\alpha|^q,
 \qquad q\ge1,
\end{equation}
so that $\Lambda_{m,2}(P)=\|P\|_{\mathrm{BW}}$; see
\cite{BeauzamyBombieriEnfloMontgomery,GalicerMansillaMuro}.  For
$2m\le p\le\infty$, set
\begin{equation}\label{eq:q-isotropic-full}
 q_{m,p}:=\frac{2mp}{mp+p-2m},
 \qquad {\color{black}q_m:=q_{m,\infty}=\frac{2m}{m+1}.}
\end{equation}

\begin{maintheorem}[{\color{black}Dimension-free Bombieri $q$-norm estimate}]
\label{thm:main-A}
There is an absolute constant $C>0$ such that, for every $m,n\in\NN$,
every $p\in[2m,\infty]$, and every complex $m$-homogeneous polynomial
$P:\CC^n\to\CC$,
\begin{equation}\label{eq:main-A-contractivity}
 \Lambda_{m,q_{m,p}}(P)
 \le\left(1+\frac Cm\right)\|P\|_p.
\end{equation}
\end{maintheorem}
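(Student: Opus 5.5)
Write $q=q_{m,p}$ and $\beta_\alpha:=(m!/\alpha!)^{-1}b_\alpha$. Then
\[
\Lambda_{m,q}(P)^q=\sum_\alpha \frac{m!}{\alpha!}|\beta_\alpha|^q,
\]
and $\beta_\alpha$ is the value of the symmetric $m$-linear form $\check P$ at any index tuple in the permutation orbit of $\alpha$, whose size is $m!/\alpha!$. So $\Lambda_{m,q}(P)$ is exactly the $\ell_q$ norm of the coefficient array of $\check P$ over all $n^m$ index tuples. The plan is to prove a symmetric Hardy--Littlewood type inequality with constant $1+C/m$, exploiting the smallness of $q-q_m$ as a perturbation. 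First I separate the pure powers $\alpha=me_j$, whose orbits are singletons, from the remaining multi-indices. For the pure-power part, the coordinate coefficients are $b_{me_j}=\int_{\TT}P(r\,e_j w)\bar w^m\,dw$-type averages. Using $\|z\|_p\le1$ test points $z=(r_j w_j)$ with $\sum r_j^p=1$, a Wiener/Parseval estimate on the torus gives
\[
\sum_j |b_{me_j}|^2 r_j^{2m}\le \|P\|_p^2 .
\]
Optimizing over $r$ (Hölder with exponent $p/(2m)\ge1$) yields $\bigl(\sum_j|b_{me_j}|^{s}\bigr)^{1/s}\le\|P\|_p$ with $s=2p/(p-2m)\cdot$ (appropriately), and $s\ge q$ in the admissible range. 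This controls the pure powers with constant exactly one.

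For the mixed part, every orbit has size at least $m$. This is where orbit compression enters: the weight $(m!/\alpha!)^{1-q}$ with $1-q<0$ gains a factor $(m!/\alpha!)^{-(q-1)}\le m^{-(q-1)}$ roughly. I would compare, via Hölder and interpolation between $\ell_1$ and $\ell_2$ coefficient norms, the mixed contribution to the full Bombieri--Weyl and $\ell_1$-type quantities, bounded by $\|P\|_p$ through the Bohnenblust--Hille/Hardy--Littlewood machinery with polynomial-in-$m$ constants. The compression factor must then beat those constants. I would also use the fact that the total Wiener bound
\[
\sum_\alpha |b_\alpha|^2 r^{2\alpha}\le\|P\|_p^2
\]
for every admissible radius vector, averaged over a random choice of $r$ on the $\ell_p$ sphere (e.g.\ Dirichlet-distributed $r^p$). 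Averaging gives $\sum_\alpha |b_\alpha|^2\,\mathbb E[r^{2\alpha}]\le\|P\|_p^2$, and $\mathbb E r^{2\alpha}$ is a ratio of Gamma functions comparable to $\alpha!/m!$ times dimension factors. For mixed $\alpha$ this is favorable by a factor that is $1+O(1/m)$ once $p\ge2m$.

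Finally I combine the two parts. Since the pure-power and averaged mixed estimates come from the same underlying Wiener inequality with a common radius distribution, they add rather than compound. I would choose a single random radius law mixing a point mass on coordinate directions with a spread-out law, with mixing weight optimized, to get the total $\Lambda_{m,q}^q\le(1+C/m)^q\|P\|_p^q$. The passage from $\ell_2$-weighted to $\ell_q$-weighted sums uses Hölder against the orbit sizes, and the exponent $q_{m,p}$ is exactly the one making the Hölder dual sum dimension-free.

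The main obstacle is this last combination: making the Gamma-function ratios $\mathbb E r^{2\alpha}$ versus $(\alpha!/m!)^{\text{power}}$ uniform in $\alpha$ and $n$ up to $1+O(1/m)$, especially for $\alpha$ with one dominant coordinate (near pure powers), where compression is weakest. I would first try Stirling estimates split according to $\max_j\alpha_j\ge m-K$ or not.
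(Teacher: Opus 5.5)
Your architecture matches the paper's: the identity $\Lambda_{m,q}(P)=\|a(\check P)\|_q$, splitting off the pure powers, and paying for mixed orbits with $N_\alpha^{-(q-1)}$. However, both analytic steps fail as proposed.

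\textbf{Pure powers.} Optimizing $\sum_j|b_{me_j}|^2r_j^{2m}\le\|P\|_p^2$ over $\sum_jr_j^p=1$ gives $\|(b_{me_j})_j\|_s\le\|P\|_p$ with $s=2p/(p-2m)$. But
\[
 \frac1q-\frac1s=\frac1{2m}+\frac{m-1}{p}>0,
\]
so $s>q$, and monotonicity of $\ell_t$-norms runs the wrong way: $\|b\|_q\ge\|b\|_s$. For example, at $p=\infty$ the vector $b=(n^{-1/2},\ldots,n^{-1/2})$ has $\|b\|_2=1$ but $\|b\|_{2m/(m+1)}=n^{1/(2m)}$. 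You need a bound at an exponent \emph{below} $q$, and no $L^2$ average provides one. The paper works with the sup norm directly. Averaging $P(\omega_1z_1,\ldots,\omega_nz_n)$ over $m$th roots of unity is a $\|\cdot\|_p$-contraction onto $P_{\mathrm{diag}}=\sum_jb_{me_j}z_j^m$. Since $z\mapsto(z_j^m)_j$ maps the $\ell_p$-ball onto the $\ell_{p/m}$-ball, duality gives $\|P_{\mathrm{diag}}\|_p=\|(b_{me_j})_j\|_{p/(p-m)}$, and $p/(p-m)\le q_{m,p}$.

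\textbf{Mixed part.} This is the essential gap: torus $L^2$ information cannot give a dimension-free bound, whatever radius law you average over.

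For $p>2m$: since $r^{2\alpha}\le1$, the coefficients $b_\alpha=\binom{n}{m}^{-1/2}$ on square-free $\alpha$ satisfy $\sum_\alpha|b_\alpha|^2r^{2\alpha}\le1$ for \emph{every} admissible $r$. Yet $\sum_\alpha N_\alpha^{1-q}|b_\alpha|^q=\binom{n}{m}^{1-q/2}(m!)^{1-q}\to\infty$.

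For $p=2m$: the choice $|b_\alpha|^2=m!/\alpha!$ gives $\sum_\alpha|b_\alpha|^2r^{2\alpha}=\|r\|_2^{2m}\le n^{m-1}$ for $\|r\|_{2m}\le1$, but $\sum_\alpha N_\alpha^{-1}|b_\alpha|^2=\binom{n+m-1}{m}\gg n^{m-1}$.

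Your Dirichlet weights are $\frac{\Gamma(n)}{\Gamma(n+2m/p)}\prod_j\Gamma(1+2\alpha_j/p)$. That is an $n^{-2m/p}$ loss times a factor between $1/2$ and $1$, not something comparable to $\alpha!/m!$.

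A global Bohnenblust--Hille/Hardy--Littlewood bound with polynomial-in-$m$ constants is also too weak. On the pattern $(m-1,1)$ the compression is only $m^{-(q-1)}\asymp m^{-1}$, so those coefficients need an $O(1)$ constant. More generally, the constant for patterns with $r$ parts must be absorbed by $[m(m-1)\cdots(m-r+2)]^{-(q-1)}$.

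The missing idea is a pattern-by-pattern reduction:
\begin{enumerate}
\item Fix a pattern $\kappa=(\kappa_1,\ldots,\kappa_r)$ and a coloring of the coordinates into blocks $I_1,\ldots,I_r$. A torus and roots-of-unity average is then a $\|\cdot\|_p$-contractive projection that keeps exactly the monomials $\prod_jz_{i_j}^{\kappa_j}$ with $i_j\in I_j$.
\item Write $y^{(j)}_i=z_i^{\kappa_j}$ on $I_j$ and scale block $j$ by $(\kappa_j/m)^{1/p}$. This turns the projected polynomial into a non-symmetric $r$-linear form on $\ell_{p/\kappa_1}\times\cdots\times\ell_{p/\kappa_r}$ of norm at most $r^{m/p}\|P\|_p$.
\item The multilinear Hardy--Littlewood inequality controls its coefficients with constant $2^{(r-1)/2}$, at an exponent $\le q$.
\item Averaging over random colorings costs the survival probability $\pi_\kappa^{-1}$.
\item An inverse-multinomial bound sums $N_\kappa^{-(q-1)}$ over compositions to $K^{r-1}[m(m-1)\cdots(m-r+2)]^{-(q-1)}$. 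Summing over $r\ge2$ gives a total of $Cm^{-(q-1)}\le C'/m$.
\end{enumerate}
Since the coefficient supports of the two parts are disjoint, they then simply add, giving $\Lambda_{m,q}(P)^q\le(1+Cm^{-(q-1)})\|P\|_p^q$. No common radius law is needed.
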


The constant $C$ is independent of the degree, the dimension, and $p$.
Thus the comparison factor tends to one uniformly throughout the
admissible range. {\color{black}The next theorem shows that this classical Bombieri $q$-norm comparison has exact constant one once $p$ lies beyond a bounded window above $2m$.}

\medskip
{\color{black}For the exact estimate, set
\[
 s_{m,p}:=\frac{p}{p-m}\quad (2m\le p<\infty),
 \qquad s_{m,\infty}:=1.
\]
This is the exponent governing the diagonal coefficients.  The parameter
$\beta_{m,p}$ below measures the separation between the Hardy--Littlewood
exponent $q_{m,p}$ from \eqref{eq:q-isotropic-full} and $s_{m,p}$.}
\begin{maintheorem}[{\color{black}Bounded-window Bombieri $q$-norm contractivity}]
\label{thm:main-B}
There are absolute constants $A,B>0$ and $m_0\in\NN$ such that, whenever
$m\ge m_0$, $n\in\NN$, and $p\in[2m,\infty]$ satisfies
\begin{equation}\label{eq:beta-criterion-intro}
 \beta_{m,p}:=\frac{q_{m,p}}{s_{m,p}}-1\ge\frac{B}{m},
\end{equation}
every complex $m$-homogeneous polynomial
$P:\CC^n\to\CC$ satisfies
\begin{equation}\label{eq:main-B-exact-subhilbert}
 \Lambda_{m,q_{m,p}}(P)\le\|P\|_p.
\end{equation}
In particular, \eqref{eq:main-B-exact-subhilbert} holds throughout
\[
 p\ge2m+A,
 \qquad m\ge m_0.
\]
{\color{black}Equivalently, the possible failure of exact Bombieri $q$-norm contractivity is confined}
to a strip of bounded width above $2m$.
\end{maintheorem}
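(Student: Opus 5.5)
The plan is to split $P$ into its coordinate pure powers and the rest, and bound the two pieces by different mechanisms: an exact H\"older estimate for the diagonal and an orbit-compression/Wiener estimate for the off-diagonal part. Write $P=D+R$, where $D(z)=\sum_j b_{me_j}z_j^m$ collects the pure powers and $R$ contains every monomial with at least two active variables. Normalize $\|P\|_p=1$. Put $q=q_{m,p}$ and $s=s_{m,p}$.

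\textbf{Diagonal step.} For the diagonal we aim for the exact estimate
\[
\Big(\sum_j|b_{me_j}|^{s}\Big)^{1/s}\le\|P\|_p .
\]
To get it, average $P(\omega_1x_1,\dots,\omega_nx_n)$ over the torus with weights $\bar\omega_j^{m}$. This isolates $\sum_j b_{me_j}x_j^m$, whose $\ell_p$-ball supremum is at most $\|P\|_p$. Since $x_j^m$ with $\|x\|_p\le1$ ranges over $\|y\|_{p/m}\le1$, duality gives the $\ell_{s}$ norm of the diagonal, with $s=(p/m)'=p/(p-m)$. This is exactly why $s_{m,p}$ appears.

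\textbf{Off-diagonal step.} For $R$, every orbit multinomial satisfies $m!/\alpha!\ge m$, and the weight $(m!/\alpha!)^{1-q}$ carries a factor $m^{1-q}$ whose exponent is roughly $-(1-\tfrac1m)$. I would invoke the compression/Wiener estimate underlying \cref{thm:main-A}: symmetrize, compress each permutation orbit, and bound the coefficients by the $L^2$ torus norm of $P$ restricted to $\ell_p$-admissible scalings. That argument should yield an estimate of the form
\[
\Lambda_{m,q}(R)^q\le \frac{K}{m}\,\Big(1-\sum_j|b_{me_j}|^{s}\Big)
\]
or a comparable bound showing that off-diagonal mass costs a factor $O(1/m)$ relative to the deficit left by the diagonal. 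To obtain the deficit form, test $P$ at points mixing the coordinate maximizer of the diagonal with small off-diagonal perturbations.

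\textbf{Combining.} Then
\[
\Lambda_{m,q}(P)^q=\sum_j|b_{me_j}|^q+\Lambda_{m,q}(R)^q .
\]
Since $|b_{me_j}|\le1$ and $q=s(1+\beta)$, we have
\[
\sum_j|b_{me_j}|^q\le\sum_j|b_{me_j}|^{s}-\sum_j|b_{me_j}|^s\bigl(1-|b_{me_j}|^{s\beta}\bigr).
\]
The gain $1-t^{s\beta}\gtrsim \beta(1-t)$ must absorb the $K/m$ loss, and $\beta\ge B/m$ with $B$ large is precisely what allows this absorption.

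\textbf{Main obstacle.} The hard step is the deficit inequality: controlling the off-diagonal Bombieri mass by $O(1/m)\,(1-\|{\rm diag}\|_s^s)$ rather than merely $O(1/m)$. I would first try a convexity/variational argument: perturb the extremal point $e_j$ and expand $|P|$ to second order.

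\textbf{Threshold and equality.} Finally, compute $\beta_{m,p}$. Since $1/q=\tfrac12+\tfrac1{2m}-\tfrac1p$ and $1/s=1-\tfrac mp$, we get $\beta\approx (p-2m)/(2m)$ for $p$ near $2m$, so $p\ge2m+A$ gives $\beta\ge B/m$ once $A\ge 2B$. Equality forces $R=0$ and a single diagonal coefficient of modulus one.
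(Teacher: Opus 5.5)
\textbf{Gap: the deficit inequality you flag as the main obstacle is unproved and is the wrong target.}

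Your skeleton matches the paper's: the diagonal/off-diagonal split, the exact $\ell_{s}$ duality for the pure powers, the $O(m^{-\eta})$ off-diagonal bound from \Cref{prop:HL-master}, and a $\beta$-gain absorbing the loss.

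As you state the inequality, with deficit $1-\sum_j|b_{me_j}|^{s}$, it would make $\beta$ irrelevant. Since $q\ge s$ and $|b_{me_j}|\le1$, it would give $\Lambda_{m,q}(P)^q\le\sum_j|b_{me_j}|^s+\frac{K}{m}\bigl(1-\sum_j|b_{me_j}|^s\bigr)\le1$ already at $p=2m$. That is exact $\|P\|_{\mathrm{BW}}\le\|P\|_{2m}$ for large $m$, which the paper does not establish; it only obtains $1+C/m$ there. So your combining paragraph, which relies on the $\beta$-gain, is inconsistent with the inequality it is meant to use.

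You also give no route to that inequality. A second-order expansion of $|P|$ at $e_j$ is purely local. It cannot control $R$ when the dominant pure coefficient $h:=\max_j|b_{me_j}|$ is not close to $1$, because then $R$ need not be small.

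\textbf{What is missing: measure the deficit through $h$ and split into two cases.}

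If $h\le h_0<1$, then $\sum_j|b_{me_j}|^q\le h^{s\beta}\sum_j|b_{me_j}|^s\le h_0^{\beta}\le 1-(1-h_0)\beta$. This absorbs the plain bound $\Lambda_{m,q}(R)^q\le C/m$ once $\beta\ge B/m$, and here your absorption heuristic is right.

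If $h>h_0$, rotate so that $b_{me_1}=h$ and write $P=hz_1^m+\sum_{d}z_1^{m-d}P_d(z')$. Apply Wiener's inequality $|a_k|\le1-|a_0|^2$ (\Cref{lem:Wiener-classical}) to the slices $\zeta\mapsto P\bigl((1-u)^{1/p},u^{1/p}\zeta x\bigr)$ with $\|x\|_p\le1$, whose constant term is $h(1-u)^{m/p}$. With $u=\delta:=1-h^2$ this gives $\|P_d\|_p\le 2\sqrt2\,\delta^{1-d/p}$ for every transverse layer. Then combine three ingredients:
\begin{enumerate}
\item \Cref{thm:main-A} applied to each slice $P_d$, using $q_{d,p}\le q$;
\item the weight bound $\sum_{d<m}\binom{m}{d}^{1-q}\le Km^{-\eta}$ from \Cref{lem:inverse-multinomial};
\item the off-diagonal estimate applied only to the nonpure part of the top layer $P_m$, keeping its pure powers $z_j^m$, $j\ge2$, in the diagonal mass.
\end{enumerate}
Together these give $\Lambda_{m,q}(R)^q\le Lm^{-\eta}\delta^{q(1-m/p)}=Lm^{-\eta}\delta^{1+\beta}$. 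This is absorbed by $\sum_j|b_{me_j}|^q\le(1-v)^\beta\le1-\beta v$, where $v=1-h^{s}\ge\delta/2$. The deficit is taken relative to the maximal pure coefficient, not relative to $\|P_{\mathrm{diag}}\|_p$, and this is exactly where $\beta\ge B/m$ is needed.

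\textbf{Minor points.} \Cref{thm:main-A} is proved by Hardy--Littlewood estimates for reduced forms and coloring projections, not by Wiener or torus $L^2$ norms; Wiener enters only in this proof. Also, $p\ge 2m+A$ gives $m\beta\to A/2$ only asymptotically in $m$, so $A\ge 2B$ is not quite enough uniformly; the paper takes $A=8B$.
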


For the multilinear statements, let $e_1,\ldots,e_n$ denote the canonical basis of $\CC^n$.
If $T:(\CC^n)^m\to\CC$ is symmetric and
\[
 P_T(z):=T(z,\ldots,z)=\sum_{|\alpha|=m}b_\alpha z^\alpha,
\]
then symmetry gives
\begin{equation}\label{eq:intro-symmetric-coefficients}
 \sum_{i_1,\ldots,i_m=1}^n |T(e_{i_1},\ldots,e_{i_m})|^q
 =\sum_{|\alpha|=m}
 \left(\frac{m!}{\alpha!}\right)^{1-q}|b_\alpha|^q
 =\Lambda_{m,q}(P_T)^q.
\end{equation}
This is the ordered-coefficient form of the Bombieri $q$-norm; it will be
derived again from permutation orbits in \Cref{lem:orbit-decomposition}.

{\color{black}The multilinear consequences take the following classical coefficient form.
The right-hand side is the stronger diagonal norm
$\sup_{\|z\|_p\le1}|T(z,\ldots,z)|$, which is bounded above by the usual
multilinear operator norm.  Contractivity and the classification of equality
are stated separately.}

\begin{corollary}[Symmetric Hardy--Littlewood contractivity]
\label{cor:symmetric-HL}
There is an absolute constant $C>0$ such that, for every $m,n\in\NN$,
every $2m\le p<\infty$, and every symmetric complex $m$-linear form
$T:(\CC^n)^m\to\CC$, {\color{black}with the diagonal norm on the right-hand side,}
\[
 \left(\sum_{i_1,\ldots,i_m=1}^n
 |T(e_{i_1},\ldots,e_{i_m})|^{\frac{2mp}{mp+p-2m}}\right)^{\frac{mp+p-2m}{2mp}}
 \le\left(1+\frac Cm\right)
 \sup_{\|z\|_{\ell_p^n}\le1}|T(z,\ldots,z)|.
\]
Moreover, there are absolute constants $A>0$ and $m_0\in\NN$ such that,
for $m\ge m_0$ and $p\ge2m+A$, the factor $1+C/m$ can be replaced by $1$.
\end{corollary}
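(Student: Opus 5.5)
The corollary follows from \Cref{thm:main-A} and \Cref{thm:main-B} by rewriting the left-hand side as a Bombieri $q$-norm. Fix a symmetric $m$-linear form $T:(\CC^n)^m\to\CC$, and let $P_T(z)=T(z,\ldots,z)=\sum_{|\alpha|=m}b_\alpha z^\alpha$ be the associated polynomial. Set $q=q_{m,p}=\frac{2mp}{mp+p-2m}$ as in \eqref{eq:q-isotropic-full}.

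First we identify the left-hand side. By the symmetric coefficient identity \eqref{eq:intro-symmetric-coefficients}, which is re-derived from permutation orbits in \Cref{lem:orbit-decomposition}, we have
\[
 \sum_{i_1,\ldots,i_m=1}^n|T(e_{i_1},\ldots,e_{i_m})|^{q}=\Lambda_{m,q}(P_T)^{q}.
\]
This identity is just bookkeeping. For a multi-index $\alpha$ with $|\alpha|=m$, exactly $m!/\alpha!$ ordered tuples $(i_1,\ldots,i_m)$ have content $\alpha$. By symmetry, each of these tuples carries the same value $T(e_{i_1},\ldots,e_{i_m})=b_\alpha\,\alpha!/m!$. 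Summing $|\cdot|^q$ over the orbit therefore gives $(m!/\alpha!)^{1-q}|b_\alpha|^q$. Taking $q$-th roots, the left-hand side of the corollary is exactly $\Lambda_{m,q_{m,p}}(P_T)$.

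Next we identify the right-hand side. By definition, $\sup_{\|z\|_p\le1}|T(z,\ldots,z)|=\|P_T\|_p$. For the first assertion, apply \Cref{thm:main-A} to $P_T$ with $p\in[2m,\infty)$. This gives $\Lambda_{m,q_{m,p}}(P_T)\le(1+C/m)\|P_T\|_p$ with the same absolute constant $C$. For the second assertion, take $m\ge m_0$ and $p\ge2m+A$, with $A,m_0$ as in the final clause of \Cref{thm:main-B}. That clause gives \eqref{eq:main-B-exact-subhilbert}, that is, $\Lambda_{m,q_{m,p}}(P_T)\le\|P_T\|_p$, so the factor $1+C/m$ can be replaced by $1$.

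There is no genuine obstacle in this argument. The only point to check is that the identity \eqref{eq:intro-symmetric-coefficients} uses symmetry correctly, so that each orbit contributes the stated weight. All the analytic work lies in \Cref{thm:main-A} and \Cref{thm:main-B}.
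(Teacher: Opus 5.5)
Your proposal is correct and matches the paper's proof: you identify the left-hand side with $\Lambda_{m,q_{m,p}}(P_T)$ via the orbit identity \eqref{eq:intro-symmetric-coefficients} and the right-hand side with $\|P_T\|_p$. You then apply \Cref{thm:main-A} for the factor $1+C/m$ and the final clause of \Cref{thm:main-B} for the constant-one range $p\ge 2m+A$, $m\ge m_0$, exactly as the paper does.
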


\begin{corollary}[Hardy--Littlewood rigidity]
\label{cor:symmetric-HL-rigidity}
{\color{black}Let $A$ and $m_0$ be as in \Cref{cor:symmetric-HL}.} {\color{black}Let $T:(\CC^n)^m\to\CC$ be nonzero.} Let $m\ge m_0$ and $p\ge2m+A$. Equality in the constant-one estimate of
\Cref{cor:symmetric-HL} holds if and only if
\[
 T(z,\ldots,z)=c z_j^m
\]
for some $j\in\{1,\ldots,n\}$ and {\color{black}$c\in\CC\setminus\{0\}$}.
\end{corollary}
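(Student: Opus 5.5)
The plan is to move the statement to the polynomial side with the ordered-coefficient identity \eqref{eq:intro-symmetric-coefficients}. That identity gives
\[
\Bigl(\sum_{i_1,\ldots,i_m}|T(e_{i_1},\ldots,e_{i_m})|^{q_{m,p}}\Bigr)^{1/q_{m,p}}=\Lambda_{m,q_{m,p}}(P_T),
\]
and the right-hand side of the estimate is exactly $\|P_T\|_p$. So the corollary says: for $m\ge m_0$ and $p\ge 2m+A$, equality $\Lambda_{m,q_{m,p}}(P)=\|P\|_p$ in \eqref{eq:main-B-exact-subhilbert} holds only for coordinate pure powers. Since $T$ is nonzero and symmetric, $P_T\neq0$ by polarization.

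\emph{The ``if'' direction} is a direct computation. If $P_T=cz_j^m$, then $T(e_{i_1},\ldots,e_{i_m})$ vanishes unless $i_1=\cdots=i_m=j$, because the orbit of $\alpha=me_j$ has a single element. Hence the left side equals $|c|$. On the right, $\sup_{\|z\|_p\le1}|c||z_j|^m=|c|$, attained at $z=e_j$.

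\emph{The ``only if'' direction} is the substantive part. One cannot simply deduce it from the Bombieri--Weyl rigidity in \Cref{thm:Bombieri-Weyl-threshold}. Indeed, for $p\ge2m$ one has $q_{m,p}\le2$, so $\Lambda_{m,2}\le\Lambda_{m,q_{m,p}}$ by monotonicity of $\ell_q$ norms on the ordered coefficients. This inequality points the wrong way: equality $\Lambda_{m,q}=\|P\|_p$ does not force $\|P\|_{\mathrm{BW}}=\|P\|_p$. Instead I would reopen the proof of \Cref{thm:main-B} and check that each inequality there is strict unless $P$ is a pure power. Following the structure announced in the abstract, write
\[
P=D+R,\qquad D(z)=\sum_j b_{me_j}z_j^m ,
\]
where $R$ contains all non-pure-power monomials. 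The argument then has two parts.

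\begin{enumerate}
\item The diagonal part is controlled through the exponent $s_{m,p}$. Evaluating at suitable points $z$ with $|z_j|\propto|b_{me_j}|^{\theta}$ gives $\|(b_{me_j})_j\|_{s_{m,p}}\le\|P\|_p$, after averaging over unimodular phases to kill $R$. Passing from $\ell_{s}$ to the smaller $\ell_{q}$-type contribution for the diagonal uses $q_{m,p}>s_{m,p}$, that is $\beta_{m,p}>0$. This step is strict unless at most one diagonal coefficient is nonzero.
\item The off-diagonal part is handled by orbit compression and the Wiener estimate. These bound the contribution of $R$ by $\|P\|_p$ with a loss factor of size roughly $1+O(1/m)$. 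The margin $\beta_{m,p}\ge B/m$ more than absorbs this loss. So whenever $R\ne0$, the final inequality has slack proportional to the $R$-contribution, which gives $\Lambda_{m,q}(P)<\|P\|_p$.
\end{enumerate}

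Combining the two parts, equality forces $R=0$ and a single nonzero diagonal coefficient, so $P=cz_j^m$.

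\emph{The main obstacle} is the second part. The absorption step in \Cref{thm:main-B} is probably written as a non-strict numerical comparison, and I would have to verify that the gap is quantitatively positive whenever $R\neq0$. Concretely, the bound should take the form
\[
\Lambda_{m,q}(P)^q\le\|P\|_p^q-\kappa\,\Lambda_{m,q}(R)^q \quad\text{for some }\kappa=\kappa(m,p)>0 .
\]
I would first try to obtain this by enlarging $B$ slightly, so that a definite fraction of the margin $\beta_{m,p}$ is left over after absorbing the Wiener loss.
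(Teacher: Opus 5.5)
Your proposal is correct. It rests on the same ingredients as the paper: reopening the two absorption inequalities in the proof of \Cref{thm:main-B}, taken with strict margin. The difference is the order in which you extract rigidity.

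\textbf{The paper's route.} It needs only strictness in terms of the dominant pure-power coefficient. With $\|P\|_p=1$ and $h=\max_j|b_{me_j}|$, both regimes of that proof give $\Lambda_{m,q}(P)<1$ whenever $h<1$, so equality forces $h=1$. Then \eqref{eq:weighted-Wiener-slice} with $h=1$ and $u\downarrow0$ makes every transverse layer $P_d$ vanish. This yields $P=cz_j^m$ directly; the other pure powers lie in $P_m$ and vanish as well.

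\textbf{Your route.} You force $R=0$ first, through a slack proportional to $M_{\rm off}=\Lambda_{m,q}(R)^q$. You then finish with the elementary strict inequality $\|(b_{me_j})_j\|_{q_{m,p}}<\|(b_{me_j})_j\|_{s_{m,p}}$, valid for $q_{m,p}>s_{m,p}$ unless only one entry is nonzero. This gives a quantitative stability bound for the nonpure mass, whereas the paper needs no explicit constant $\kappa$. Your route still contains the paper's last step: at $h=1$ your slack inequality just says $M_{\rm off}=0$, and inside the proof of \Cref{thm:main-B} that is proved exactly by the $u\downarrow0$ Wiener argument.

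\textbf{A correction to Part 2.} The slack does not come from absorbing a uniform loss of size $O(1/m)$ into the margin $\beta_{m,p}\ge B/m$. Near a dominant pure power the diagonal deficit is only $\beta v$, where $v=1-h^{s_{m,p}}\ge\delta/2$ and $\delta=1-h^2$. This deficit tends to $0$ as $h\to1$, so the bound $M_{\rm off}\le Cm^{-\eta}$ from \eqref{eq:off-diagonal-isotropic} cannot be absorbed there.

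Your inequality holds because of the two-regime structure of the proof:
\begin{itemize}
\item For $h\le h_0$, one has $\Lambda_{m,q}(P)^q\le1-(1-h_0)\beta+M_{\rm off}$ with $M_{\rm off}\le CC_1/m$. Taking $B_1>CC_1/(1-h_0)$ leaves slack $\kappa M_{\rm off}$.
\item For $h>h_0$, the weighted Wiener slice gives $M_{\rm off}\le(2C_3/m)v$. Hence
\[
\Lambda_{m,q}(P)^q\le1-\beta v+M_{\rm off}\le1-\Bigl(\frac{m\beta}{2C_3}-1\Bigr)M_{\rm off},
\]
whose coefficient is positive once $B_2>2C_3$.
\end{itemize}
With this made explicit, your argument is complete.
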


\begin{corollary}[Symmetric Bohnenblust--Hille contractivity]
\label{cor:symmetric-BH}
There is an absolute constant $C>0$ such that every symmetric complex
$m$-linear form $T:(\CC^n)^m\to\CC$ satisfies, {\color{black}with the diagonal norm on the right-hand side,}
\[
 \left(\sum_{i_1,\ldots,i_m=1}^n
 |T(e_{i_1},\ldots,e_{i_m})|^{\frac{2m}{m+1}}\right)^{\frac{m+1}{2m}}
 \le\left(1+\frac Cm\right)
 \sup_{\|z\|_{\ell_\infty^n}\le1}|T(z,\ldots,z)|.
\]
{\color{black}Moreover, there is an absolute $m_0\in\NN$ such that, for every $m\ge m_0$, the factor $1+C/m$ can be replaced by $1$.}
\end{corollary}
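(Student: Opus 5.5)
The plan is to reduce \Cref{cor:symmetric-BH} to the case $p=\infty$ of \Cref{thm:main-A} and \Cref{thm:main-B}, using the identity \eqref{eq:intro-symmetric-coefficients}. Let $T$ be symmetric and set $P_T(z)=T(z,\ldots,z)$. With $q=q_m=q_{m,\infty}=2m/(m+1)$, identity \eqref{eq:intro-symmetric-coefficients} says that the left-hand side of the corollary equals $\Lambda_{m,q_m}(P_T)$. The right-hand side is $\sup_{\|z\|_\infty\le1}|P_T(z)|=\|P_T\|_\infty$ by definition. The corollary is therefore exactly the inequality $\Lambda_{m,q_{m,\infty}}(P_T)\le(\cdot)\,\|P_T\|_\infty$.

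For the first assertion, apply \Cref{thm:main-A} with $p=\infty$. This value is admissible, since $p\in[2m,\infty]$. It gives the factor $1+C/m$ with the same absolute constant $C$.

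For the constant-one assertion, use \Cref{thm:main-B} with $p=\infty$. Here $s_{m,\infty}=1$, so
\[
\beta_{m,\infty}=q_m-1=\frac{2m}{m+1}-1=\frac{m-1}{m+1}.
\]
This quantity is at least $1/3$ for $m\ge2$. Hence the criterion \eqref{eq:beta-criterion-intro}, namely $\beta_{m,\infty}\ge B/m$, holds as soon as $m\ge 3B$ (and $m\ge2$). Take the new $m_0$ to be the maximum of the $m_0$ from \Cref{thm:main-B}, $\lceil 3B\rceil$, and $2$. For every $m$ at least this value, \eqref{eq:main-B-exact-subhilbert} at $p=\infty$ gives the inequality with constant one.

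No step is a real obstacle. The only things to check are that $p=\infty$ is included in the ranges of both theorems, which it is since both are stated for $p\in[2m,\infty]$, and the short verification of the $\beta$ criterion above.
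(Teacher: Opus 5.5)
Your proposal is correct and takes essentially the same route as the paper: it applies \Cref{thm:main-A,thm:main-B} at $p=\infty$ to $P_T(z)=T(z,\ldots,z)$ and uses \eqref{eq:intro-symmetric-coefficients}. Your explicit check that $\beta_{m,\infty}=(m-1)/(m+1)\ge B/m$ once $m\ge 3B$ is a harmless extra detail, since the paper's proof of \Cref{thm:main-B} handles the endpoint $p=\infty$ directly by enlarging $m_0$.
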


\begin{corollary}[Bohnenblust--Hille rigidity]
\label{cor:symmetric-BH-rigidity}
{\color{black}Let $m_0$ be as in \Cref{cor:symmetric-BH}.} {\color{black}Let $T:(\CC^n)^m\to\CC$ be nonzero.} For every $m\ge m_0$, equality in the constant-one estimate of
\Cref{cor:symmetric-BH} holds if and only if
\[
 T(z,\ldots,z)=c z_j^m
\]
for some $j\in\{1,\ldots,n\}$ and {\color{black}$c\in\CC\setminus\{0\}$}.
\end{corollary}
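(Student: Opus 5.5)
By the identity \eqref{eq:intro-symmetric-coefficients}, the left-hand side of \Cref{cor:symmetric-BH} equals $\Lambda_{m,q_m}(P_T)$ with $q_m=2m/(m+1)$, and the right-hand side is $\|P_T\|_\infty$. So the corollary reduces to a statement about $P:=P_T=\sum_{|\alpha|=m}b_\alpha z^\alpha$: for $m\ge m_0$, $\Lambda_{m,q_m}(P)=\|P\|_\infty$ holds if and only if $P=cz_j^m$.

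At $p=\infty$ we have $s_{m,\infty}=1$, so $\beta_{m,\infty}=q_m-1=(m-1)/(m+1)$. This is $\ge B/m$ once $m$ is large. Hence \Cref{thm:main-B} gives the inequality $\Lambda_{m,q_m}(P)\le\|P\|_\infty$, and $m_0$ is taken as in that theorem, enlarged if needed. The \emph{if} direction is immediate. For $P=cz_j^m$ the only coefficient is $b_{me_j}=c$ with weight $m!/\alpha!=1$, so $\Lambda_{m,q_m}(P)=|c|=\sup_{\TT^n}|cz_j^m|$.

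For the \emph{only if} direction I would reopen the proof of \Cref{thm:main-B} and extract a strict form of it. That proof separates the coordinate pure powers, writing
\[
P=D+R,\qquad D(z)=\sum_j b_{me_j}z_j^m,
\]
where $R$ collects the mixed monomials. It then bounds the orbit-compressed contribution of $R$ by a Wiener-type coefficient estimate. For the mixed monomials, the orbit weights $m!/\alpha!\ge m$ enter with the negative power $1-q_m$. I expect the Wiener estimate to carry slack of order $\beta_{m,\infty}-B/m>0$ precisely in the regime where $\beta$ dominates $B/m$.

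Concretely, I would aim for an inequality of the form
\[
\Lambda_{m,q_m}(P)^{q_m}\le\|P\|_\infty^{q_m}-\kappa\,\Lambda_{m,q_m}(R)^{q_m},\qquad\kappa=\kappa(m)>0.
\]
I would obtain it by keeping, rather than discarding, the positive gap in the final comparison step, after possibly enlarging $B$ slightly so that the slack is genuine. Equality then forces $R=0$.

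Once $R=0$, the remaining case is elementary. Choosing phases $z_j=\overline{\operatorname{sgn} b_{me_j}}^{1/m}$ gives $\|D\|_\infty=\sum_j|b_{me_j}|$. On the other hand, $\Lambda_{m,q_m}(D)=\bigl(\sum_j|b_{me_j}|^{q_m}\bigr)^{1/q_m}$ since all weights equal $1$. Because $q_m>1$, the $\ell_{q_m}$ norm is strictly smaller than the $\ell_1$ norm unless at most one $b_{me_j}$ is nonzero. Equality therefore gives $P=cz_j^m$, and $T\ne0$ forces $c\ne0$.

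The main obstacle is the strictness step for $R\neq0$. The statement of \Cref{thm:main-B} alone does not yield it, and a monotonicity-in-$p$ trick runs the wrong way, since $q_{m,p}$ increases as $p$ decreases. So the argument must track where the inequalities in the orbit-compression and Wiener estimates are strict. I expect this to come from the strict inequality $\beta_{m,\infty}>B/m$ for $m$ beyond the threshold, after adjusting $m_0$.
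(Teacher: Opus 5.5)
\textbf{Verdict: genuine gap.} Your reduction to $P=P_T$ and the ``if'' direction are correct. So is the closing step: if $R=0$, then $\|D\|_\infty=\sum_j|b_{me_j}|$ strictly exceeds $\|(b_{me_j})_j\|_{q_m}$ unless only one coefficient survives. The gap is the step you flag yourself. Strictness for $R\ne0$ is only conjectured, and the mechanism you predict would not deliver it.

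The margin in the proof of \Cref{thm:main-B} is not a fixed positive quantity coming from $\beta_{m,\infty}>B/m$. It is proportional to the deficit of the dominant pure-power coefficient. Normalize $\|P\|_\infty=1$ and put $h=\max_j|b_{me_j}|$. For a fixed $h_0\in(2^{-1/2},1)$, the endpoint argument gives two branches:
\begin{itemize}
\item if $h\le h_0$, then $\Lambda_{m,q_m}(P)^{q_m}\le h_0^{q_m-1}+Cm^{-(q_m-1)}<1$ for large $m$;
\item if $h>h_0$, then only $\Lambda_{m,q_m}(P)^{q_m}\le 1-\delta/4$, with $\delta=1-h^2$.
\end{itemize}
In the finite-$p$ bookkeeping the margin is likewise a positive multiple of $v=1-h^{s_{m,p}}$.

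Keeping this margin, as you propose, therefore gives strict inequality whenever $h<1$, and the paper uses exactly that. But the margin vanishes at $h=1$, so no enlargement of $B$ or $m_0$ says anything about such configurations. To turn a margin in $\delta$ into your bound $\Lambda_{m,q_m}(P)^{q_m}\le\|P\|_\infty^{q_m}-\kappa\,\Lambda_{m,q_m}(R)^{q_m}$, you also need $\Lambda_{m,q_m}(R)^{q_m}\lesssim\delta$. In particular you need $h=1\Rightarrow R=0$, and your proposal never supplies this.

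That implication is the second ingredient of the paper's proof:
\begin{enumerate}
\item Equality forces $h=1$, by the strictness just described.
\item Rotate so that $b_{me_1}=1$ and apply \Cref{lem:Wiener-slice}. Its bound $\|P_d\|_\infty\le 1-|c|^2$ becomes $0$.
\item Hence every transverse layer $P_d$ vanishes. This includes the pure powers $b_{me_j}z_j^m$, $j\ge2$, which sit inside $P_m$. So $P=cz_1^m$ outright.
\end{enumerate}
With this in hand your inequality does follow for some $\kappa>0$. In the dominant branch the whole transverse mass, and hence $\Lambda_{m,q_m}(R)^{q_m}$, is at most $C\delta^{q_m}\le C\delta$, while the margin is $\delta/4$. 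At that point your separate diagonal $\ell_{q_m}$-versus-$\ell_1$ argument is no longer needed, since the Wiener step already removes the other pure powers.
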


{\color{black}The coefficient inequalities used here belong to the classical
Bohnenblust--Hille and Hardy--Littlewood traditions; see
\cite{BH,PracianoPereira,DefantFrerickOrtegaOunaiesSeip} and, for more recent
coefficient-summability developments, \cite{BayartJEMS,PellegrinoTeixeiraQP}. {\color{black}The interpolation framework used in such exponent arguments is standard; see, for instance, \cite{BerghLofstrom}.}}

The isotropic Hardy--Littlewood and Bohnenblust--Hille corollaries follow directly as below.

\begin{proof}[Proof of \Cref{cor:symmetric-HL,cor:symmetric-HL-rigidity,cor:symmetric-BH,cor:symmetric-BH-rigidity}]
Apply \Cref{thm:main-A,thm:main-B} to $P(z)=T(z,\ldots,z)$ and use
\eqref{eq:intro-symmetric-coefficients}. The Bohnenblust--Hille estimate is the
case $p=\infty$.

{\color{black}For the equality statements, $P=0$ is trivial.  If $n=1$, every
$m$-homogeneous polynomial has the form $P(z)=cz^m$, so it is already a
coordinate pure power and the asserted characterization follows directly.}
Otherwise, by homogeneity, assume $\|P\|_p=1$ and put
\[
 h:=\max_j|b_{me_j}|.
\]
The proof of \Cref{thm:main-B} gives strict inequality whenever $h<1$
in the stated contractive range. Equality therefore forces $h=1$.
Choose a coordinate whose pure-power coefficient has modulus one.
For finite $p$, \eqref{eq:weighted-Wiener-slice}, with $h=1$ and
$u\downarrow0$, forces every transverse layer to vanish. For
$p=\infty$, the same conclusion follows from
\Cref{lem:Wiener-slice}. Hence $P(z)=c z_j^m$. Conversely, every
coordinate pure power attains equality.
\end{proof}

\section{\textcolor{black}{Polynomial orbit compression and inverse-multinomial decay}}
\label{sec:orbit-technique}

{\color{black}
Let $P$ be an $m$-homogeneous polynomial and $\check P$ its symmetric
polarization. {\color{black}Thus $\check P$ is the unique symmetric $m$-linear form such that
$P(z)=\check P(z,\ldots,z)$; equivalently, for $|\alpha|=m$,
$ b_\alpha=N_\alpha\,\check P(e_1^{\alpha_1},\ldots,e_n^{\alpha_n})$.}  The ordered coefficients of $\check P$ are constant on
permutation orbits, whereas each coefficient of $P$ collects one entire
orbit.  If the orbit has cardinality $N$ and the corresponding polynomial
coefficient is $b$, its contribution to the ordered $\ell_q$-sum is
\[
 N\left|\frac{b}{N}\right|^q=N^{1-q}|b|^q.
\]
{\color{black}By \eqref{eq:orbit-norm-definition}, this is exactly the weight in the classical Bombieri $q$-norm $\Lambda_{m,q}(P)$.}  For $q>1$, large orbits are
therefore suppressed by the inverse multiplicity factor $N^{1-q}$; the
remaining task is to control the small ones.  The inverse-multinomial estimate,
together with the projections and coloring arguments below, does precisely
that.
}

{\color{black}Throughout this section, write
\[
 P(z)=\sum_{|\alpha|=m}b_\alpha z^\alpha,
\]
and let $\check P$ denote its symmetric polarization.  As in the introduction,
$\alpha=(\alpha_1,\ldots,\alpha_n)$ is a coordinate multi-index.  We use
$\kappa$ only for multiplicity patterns.  We shall also use
\[
 \supp(\alpha):=\{j\in\{1,\ldots,n\}:\alpha_j\ne0\}.
\]}
For $|\alpha|=m$, set
\[
 \mathbf i_\alpha:=
 \bigl(
 \underbrace{1,\ldots,1}_{\alpha_1},
 \underbrace{2,\ldots,2}_{\alpha_2},
 \ldots,
 \underbrace{n,\ldots,n}_{\alpha_n}
 \bigr)\in\{1,\ldots,n\}^m.
\]
The symmetric group $S_m$ acts on $\{1,\ldots,n\}^m$ by permuting
coordinates.  We define the permutation orbit associated with $\alpha$ by
\[
 \mathcal O_\alpha
 :=\{\sigma\mathbf i_\alpha:\sigma\in S_m\}.
\]
Hence
\[
 N_\alpha:=|\mathcal O_\alpha|
 =\frac{m!}{\alpha_1!\cdots\alpha_n!}
 =\frac{m!}{\alpha!}.
\]
For example, if $m=3$ and $\alpha=(2,1,0,\ldots,0)$, then
\[
 \mathbf i_\alpha=(1,1,2)
 \qquad\text{and}\qquad
 \mathcal O_\alpha
 =\{(1,1,2),(1,2,1),(2,1,1)\}.
\]
The permutation orbits satisfy the following identity.
\begin{lemma}\label{lem:orbit-decomposition}
{\color{black}
Let $P(z)=\sum_{|\alpha|=m}b_\alpha z^\alpha$ be $m$-homogeneous, and let
$\check P$ be its symmetric polarization.  Then, for every
$|\alpha|=m$ and every $(i_1,\ldots,i_m)\in\mathcal O_\alpha$,
\[
 \check P(e_{i_1},\ldots,e_{i_m})=\frac{b_\alpha}{N_\alpha}.
\]
Consequently, for every $q>0$,
\begin{equation}\label{eq:intro-orbit-identity}
 \sum_{i_1,\ldots,i_m=1}^{n}
 |\check P(e_{i_1},\ldots,e_{i_m})|^q
 =\sum_{|\alpha|=m}N_\alpha^{\,1-q}|b_\alpha|^q.
\end{equation}
For $q\ge1$, the common value is $\Lambda_{m,q}(P)^q$.
}
\end{lemma}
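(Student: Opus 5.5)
The plan is to expand $P(z)=\check P(z,\ldots,z)$ by multilinearity, group the ordered index tuples into permutation orbits, and read off the ordered coefficients from the uniqueness of polynomial coefficients. First, writing $z=\sum_j z_je_j$ and expanding,
\[
 P(z)=\sum_{i_1,\ldots,i_m=1}^n \check P(e_{i_1},\ldots,e_{i_m})\,z_{i_1}\cdots z_{i_m}.
\]
For a tuple $(i_1,\ldots,i_m)$, the monomial $z_{i_1}\cdots z_{i_m}$ equals $z^\alpha$, where $\alpha_j=\#\{k:i_k=j\}$. The set of tuples giving the same $\alpha$ is exactly $\mathcal O_\alpha$: a tuple has content $\alpha$ if and only if it is a rearrangement of $\mathbf i_\alpha$. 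The orbits $\mathcal O_\alpha$ with $|\alpha|=m$ therefore partition $\{1,\ldots,n\}^m$.

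Symmetry of $\check P$ means its value is constant on each orbit; call the common value $c_\alpha$. Grouping terms gives
\[
 P(z)=\sum_{|\alpha|=m}N_\alpha c_\alpha z^\alpha .
\]
Monomials are linearly independent, so $b_\alpha=N_\alpha c_\alpha$. This proves the first claim. It is also consistent with the relation $b_\alpha=N_\alpha\check P(e_1^{\alpha_1},\ldots,e_n^{\alpha_n})$ recorded at the start of the section, and it suffices to take the representative $\mathbf i_\alpha$, where $N_\alpha=m!/\alpha!$ is the standard multinomial count.

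For the identity \eqref{eq:intro-orbit-identity}, split the ordered sum according to the orbit partition. Each orbit contributes
\[
 N_\alpha\,|b_\alpha/N_\alpha|^q=N_\alpha^{1-q}|b_\alpha|^q ,
\]
which holds for every $q>0$. For $q\ge1$, substituting $N_\alpha=m!/\alpha!$ shows that the result matches the definition \eqref{eq:orbit-norm-definition} of $\Lambda_{m,q}(P)^q$ term by term.

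There is no real obstacle here. The only point requiring care is the bijection between orbits and multi-indices, which needs both directions: that the orbit of a tuple is determined by its content, and that the orbits are disjoint and exhaust all tuples.
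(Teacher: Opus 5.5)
Your proof is correct and follows essentially the same route as the paper: expand $\check P(z,\ldots,z)$ multilinearly, use symmetry to see that the ordered coefficients are constant on each orbit $\mathcal O_\alpha$, compare coefficients of $z^\alpha$ to get $b_\alpha=N_\alpha\check P(e_{i_1},\ldots,e_{i_m})$, and then sum $q$th powers orbit by orbit. Your explicit check that the orbits are exactly the content classes and partition $\{1,\ldots,n\}^m$ makes precise a step the paper leaves implicit.
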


\begin{proof}
{\color{black}
For a fixed multi-index $\alpha$, symmetry makes
$\check P(e_{i_1},\ldots,e_{i_m})$ constant on $\mathcal O_\alpha$.
Expanding $P(z)=\check P(z,\ldots,z)$, the coefficient of $z^\alpha$ is the
sum of these $N_\alpha$ equal ordered coefficients.  Each is therefore
$b_\alpha/N_\alpha$.  Summing their $q$th powers first over the orbit and
then over the disjoint orbits proves \eqref{eq:intro-orbit-identity}; the
last assertion is \eqref{eq:orbit-norm-definition}.
}
\end{proof}

\par\medskip
{For $|\alpha|=m$, let
\(\pat(\alpha)=(\kappa_1,\ldots,\kappa_r)\)
be the nonzero entries of $\alpha$ arranged in nonincreasing order. Thus
$\kappa_1\ge\cdots\ge\kappa_r\ge1$ and
$\kappa_1+\cdots+\kappa_r=m$. We call $\pat(\alpha)$ the
\emph{multiplicity pattern} of $\alpha$.}
\medskip

For $q>1$, the contribution corresponding to $\alpha$ contains the factor $N_\alpha^{-(q-1)}$.  The required multinomial sums are controlled by the following estimate.

\begin{lemma}\label{lem:inverse-multinomial}
There is an absolute constant $K>1$ such that, for $a\ge1/2$ and
$1\le r\le m$,
\begin{equation}\label{eq:inverse-multinomial}
 \sum_{\substack{{\kappa}_1+\cdots+{\kappa}_r=m\\{\kappa}_j\ge1}}
 \binom m{{\kappa}_1,\ldots,{\kappa}_r}^{-a}
 \le
 \frac{K^{r-1}}
 {\bigl[m(m-1)\cdots(m-r+2)\bigr]^a},
\end{equation}
with the denominator interpreted as $1$ when $r=1$.
\end{lemma}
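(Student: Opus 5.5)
We argue by induction on $r$, peeling off the last part $\kappa_r$ of the composition. Throughout, write
\[
S_r(m):=\sum_{\substack{\kappa_1+\cdots+\kappa_r=m\\ \kappa_j\ge1}}\binom m{\kappa_1,\ldots,\kappa_r}^{-a},
\qquad (m)_{j}:=m(m-1)\cdots(m-j+1),
\]
with $(m)_0=1$. The claim is $S_r(m)\le K^{r-1}(m)_{r-1}^{-a}$.

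\emph{Base case.} For $r=1$ there is only the composition $\kappa_1=m$, so $S_1(m)=1$, as required.

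\emph{Inductive step.} Let $r\ge2$ and set $k=\kappa_r$, which ranges over $1\le k\le m-r+1$. The multinomial factors as
\[
\binom m{\kappa_1,\ldots,\kappa_r}=\binom mk\binom{m-k}{\kappa_1,\ldots,\kappa_{r-1}}.
\]
Applying the induction hypothesis to the inner sum, with $m-k\ge r-1$, gives
\[
S_r(m)\le K^{r-2}\sum_{k=1}^{m-r+1}\Bigl[\binom mk\,(m-k)_{r-2}\Bigr]^{-a}.
\]
So it suffices to show that
\[
\sum_{k=1}^{m-r+1}\Bigl(\frac{(m)_{r-1}}{\binom mk\,(m-k)_{r-2}}\Bigr)^{a}\le K .
\]
Put $M:=m-r+1\ge1$. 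The ratio inside the bracket is
\[
\frac{m!}{M!}\cdot\frac{k!\,(m-k)!}{m!}\cdot\frac{(M+1-k)!}{(m-k)!}
=\frac{k!\,(M+1-k)!}{M!}
=\frac{M+1}{\binom{M+1}{k}} .
\]
Hence everything reduces to a one-parameter estimate: there is an absolute $K$ with
\[
\sum_{k=1}^{M}\Bigl(\frac{M+1}{\binom{M+1}k}\Bigr)^{a}\le K
\]
for all $M\ge1$ and $a\ge1/2$. This bound no longer depends on $r$ or $m$.

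\emph{The one-parameter estimate.} This is the only real work. For $1\le k\le M$ we have $\binom{M+1}k\ge M+1$, so each ratio is at most $1$. Since $a\ge1/2$, every term is therefore bounded by its value at $a=1/2$, and we may take $a=1/2$.

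We split the range of $k$ into three groups.
\begin{itemize}
\item The endpoints $k=1$ and $k=M$ each contribute exactly $1$.
\item The terms $k=2$ and $k=M-1$ each contribute $\sqrt{2/M}\le\sqrt2$.
\item For $3\le k\le M-2$, which is nonempty only when $M\ge5$, unimodality gives $\binom{M+1}k\ge\binom{M+1}3$. Each such term is then at most $\sqrt{6/(M(M-1))}$. There are at most $M$ of them, so together they contribute at most $M\sqrt{6/(M(M-1))}\le\sqrt{12}$.
\end{itemize}
Adding the three groups gives a bound $K_0$, for instance $K_0=2+2\sqrt2+\sqrt{12}$. This is uniform in $M$; small values of $M$ are covered automatically, since there the sum has at most $M\le4$ terms, each at most $1$.

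Taking any $K\ge\max(K_0,2)$ closes the induction. The only point needing care is this uniform bound on the middle binomial terms, and the $a\ge1/2$ hypothesis is used exactly there, to make the $M$ middle terms of size $O(M^{-2a})$ summable to $O(1)$.
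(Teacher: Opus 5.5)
Your proof is correct and follows the paper's argument. Both proofs use induction on the number of parts, factor one part out of the multinomial, and reduce everything to the uniform bound $\sup_{N\ge2}\sum_{k=1}^{N-1}\sqrt{N/\binom Nk}<\infty$ (your $M+1$ is the paper's $M$). The only difference is in that final estimate: your unimodality bound $\binom{M+1}{k}\ge\binom{M+1}{3}$ for the middle terms is shorter than the paper's split at $j\approx\sqrt M$ using $\binom Mj\ge(M/j)^j$.
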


\begin{proof}
{We use the auxiliary estimate}
\begin{equation}\label{eq:inverse-binomial-uniform}
 \sup_{M\ge2}\sum_{j=1}^{M-1}
 \sqrt{\frac M{\binom Mj}}<\infty.
\end{equation}
By symmetry it suffices, up to a factor two, to take $j\le M/2$.
\textcolor{black}{The summand corresponding to $j=1$ is equal to $1$.}
If $2\le j\le\sqrt M$, then
$\binom Mj\ge(M/j)^j$, so
\[
 \sqrt{\frac M{\binom Mj}}
 \le\sqrt M\left(\frac jM\right)^{j/2}.
\]
The terms $j=2,3$ are $O(M^{-1/2})$, while for $j\ge4$ the right-hand side
is at most $M^{-1/2}M^{-(j-4)/4}$.  {Hence the sum over
$2\le j\le\sqrt M$ is bounded by an absolute constant, uniformly in $M$.}
{\color{black}Indeed, for $M\ge16$,
\[
 \sum_{j=4}^{\lfloor\sqrt M\rfloor}
 M^{-1/2}M^{-(j-4)/4}
 \le M^{-1/2}\sum_{\ell=0}^{\infty}M^{-\ell/4}
 =\frac{M^{-1/2}}{1-M^{-1/4}}
 \le 2M^{-1/2},
\]
and the finitely many values $2\le M<16$ are absorbed into the absolute constant.}
If $\sqrt M<j\le M/2$, the same binomial bound gives
\[
 \sqrt{\frac M{\binom Mj}}\le\sqrt M\,2^{-j/2}.
\]
{Therefore
\[
 \sum_{\sqrt M<j\le M/2}\sqrt{\frac M{\binom Mj}}
 \le \sqrt M\sum_{j>\sqrt M}2^{-j/2},
\]
and the right-hand side is uniformly bounded in $M$.  {\color{black}More explicitly,
\[
 \sqrt M\sum_{j>\sqrt M}2^{-j/2}
 \le \frac{\sqrt M\,2^{-\sqrt M/2}}{1-2^{-1/2}},
\]
which is bounded uniformly for $M\ge2$.}  This proves
\eqref{eq:inverse-binomial-uniform}.}

\textcolor{black}{Let $C_0$ be an absolute upper bound for the supremum in
\eqref{eq:inverse-binomial-uniform}, and fix
$K\ge\max\{2,C_0\}$.}

{We prove \eqref{eq:inverse-multinomial} by induction on the number
$r$ of parts.  {\color{black}If $r=1$, the only admissible tuple is $(m)$, so the left-hand side of
\eqref{eq:inverse-multinomial} equals $\binom m m^{-a}=1$, while the denominator
on the right is the empty product and is also $1$.}  Fix $r\ge2$ and assume that
\eqref{eq:inverse-multinomial} holds for $r-1$ parts, for every admissible
total degree.}
{To apply the induction hypothesis, fix the first part,
${\kappa}_1=j$.  Then
\[
 {\kappa}_2+\cdots+{\kappa}_r=m-j.
\]
Since ${\kappa}_2,\ldots,{\kappa}_r\ge1$, necessarily
$1\le j\le m-r+1$.  The multinomial coefficient factors as
\[
 \binom m{j,{\kappa}_2,\ldots,{\kappa}_r}
 =\binom mj
  \binom{m-j}{{\kappa}_2,\ldots,{\kappa}_r}.
\]
Consequently,
\[
\sum_{\substack{{\kappa}_1+\cdots+{\kappa}_r=m\\{\kappa}_\nu\ge1}}
 \binom m{{\kappa}_1,\ldots,{\kappa}_r}^{-a}
=
 \sum_{j=1}^{m-r+1}\binom mj^{-a}
 \sum_{\substack{{\kappa}_2+\cdots+{\kappa}_r=m-j\\{\kappa}_\nu\ge1}}
 \binom{m-j}{{\kappa}_2,\ldots,{\kappa}_r}^{-a}.
\]
The inner sum has exactly $r-1$ positive parts, so the induction
hypothesis applies to it and gives}
\begin{equation}\label{eq:inverse-multinomial-inductive}
\begin{aligned}
&\sum_{\substack{{\kappa}_1+\cdots+{\kappa}_r=m\\{\kappa}_\nu\ge1}}
 \binom m{{\kappa}_1,\ldots,{\kappa}_r}^{-a} \\
&\qquad\le
 K^{r-2}\sum_{j=1}^{m-r+1}
 \binom mj^{-a}
 \bigl[(m-j)(m-j-1)\cdots(m-j-r+3)\bigr]^{-a}.
\end{aligned}
\end{equation}
{Here and below, an empty product is interpreted as $1$.}
{Put $M:=m-r+2$, so that $m-r+1=M-1$.  For
$1\le j\le M-1$,}
\[
\begin{aligned}
&\bigl[m(m-1)\cdots(m-r+2)\bigr]^a
 \binom mj^{-a}
 \bigl[(m-j)(m-j-1)\cdots(m-j-r+3)\bigr]^{-a}\\
&\qquad=
\left[
\frac{m(m-1)\cdots(m-r+2)}
 {\binom mj\,(m-j)(m-j-1)\cdots(m-j-r+3)}
\right]^a\\
&\qquad=
\left[
\frac{m!/(M-1)!}
 {\bigl(m!/[j!(m-j)!]\bigr)\bigl((m-j)!/(M-j)!\bigr)}
\right]^a\\
&\qquad=
\left[\frac{j!(M-j)!}{(M-1)!}\right]^a
=
\left(\frac M{\binom Mj}\right)^a.
\end{aligned}
\]
{Multiplying \eqref{eq:inverse-multinomial-inductive} by
$[m(m-1)\cdots(m-r+2)]^a$ gives}
\begingroup\small
\[
\bigl[m(m-1)\cdots(m-r+2)\bigr]^a
\sum_{\substack{{\kappa}_1+\cdots+{\kappa}_r=m\\{\kappa}_\nu\ge1}}
 \binom m{{\kappa}_1,\ldots,{\kappa}_r}^{-a}
\le K^{r-2}\sum_{j=1}^{M-1}
\left(\frac M{\binom Mj}\right)^a.
\]
\endgroup
{Since $0<M/\binom Mj\le1$ and $a\ge1/2$,}
\[
\left(\frac M{\binom Mj}\right)^a
\le
\left(\frac M{\binom Mj}\right)^{1/2}.
\]
{Hence, by \eqref{eq:inverse-binomial-uniform},}
\begingroup\small
\[
\bigl[m(m-1)\cdots(m-r+2)\bigr]^a
\sum_{\substack{{\kappa}_1+\cdots+{\kappa}_r=m\\{\kappa}_\nu\ge1}}
 \binom m{{\kappa}_1,\ldots,{\kappa}_r}^{-a}
 \le \textcolor{black}{C_0}K^{r-2}\le K^{r-1}.
\]
\endgroup
{\textcolor{black}{By the choice of $K$, the last inequality is valid.}
Dividing by $[m(m-1)\cdots(m-r+2)]^a$ gives \eqref{eq:inverse-multinomial} and
completes the induction.}
\end{proof}

The coefficient projections needed below can be realized by averaging over colorings.

\begin{lemma}\label{lem:pattern-filter}
Let ${\kappa}=({\kappa}_1,\ldots,{\kappa}_r)$ be positive integers with
${\kappa}_1+\cdots+{\kappa}_r=m$.  Let
\[
 \chi:\{1,\ldots,n\}\longrightarrow\{1,\ldots,r\}
\]
{be any map (equivalently, an $r$-coloring of the coordinate set), and define}
\[
 I_j:=\chi^{-1}(\{j\})\subseteq\{1,\ldots,n\},
 \qquad j=1,\ldots,r.
\]
{Then there exists a linear projection
\[
 \mathcal F_{\chi,{\kappa}}:\mathcal P_m(\CC^n)\longrightarrow\mathcal P_m(\CC^n)
\]
with the following properties:
\begin{enumerate}[label=\textup{(\alph*)},leftmargin=2.2em]
\item \label{item:pattern-action}
\begin{equation}\label{eq:pattern-filter-action}
 \mathcal F_{\chi,\kappa}\!\left(\prod_{j=1}^r z_{i_j}^{\kappa_j}\right)
 =\prod_{j=1}^r z_{i_j}^{\kappa_j},
 \qquad i_j\in I_j\quad(j=1,\ldots,r),
\end{equation}
and
\begin{equation}\label{eq:pattern-filter-zero}
 \mathcal F_{\chi,\kappa}(z^\alpha)=0
\end{equation}
for every other degree-$m$ monomial $z^\alpha$. In particular,
\[
 \mathcal F_{\chi,\kappa}^2=\mathcal F_{\chi,\kappa}.
\]
\item \label{item:pattern-contraction}
For every $1\le p\le\infty$ and every $Q\in\mathcal P_m(\CC^n)$,
\begin{equation}\label{eq:pattern-filter-contractive}
 \|\mathcal F_{\chi,{\kappa}}Q\|_p\le\|Q\|_p.
\end{equation}
\end{enumerate}}
\end{lemma}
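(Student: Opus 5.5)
The plan is to build $\mathcal F_{\chi,\kappa}$ as an average of $Q\circ D$ over diagonal unitary substitutions $D$, each multiplied by a unimodular scalar weight. Such an operator acts diagonally on monomials, so \ref{item:pattern-action} reduces to computing one character average per monomial. Contractivity \ref{item:pattern-contraction} follows from the triangle inequality, because every such $D$ maps the unit ball of $\ell_p^n$ onto itself.

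\emph{Construction.} Let $w=(w_1,\ldots,w_r)$ be Haar-distributed on $\TT^r$. Independently, for every coordinate $i$, let $\varepsilon_i$ be uniformly distributed on the group $\mu_{\kappa_{\chi(i)}}$ of $\kappa_{\chi(i)}$-th roots of unity; when $\kappa_j=1$ this group is trivial. Put $D_{w,\varepsilon}z:=(w_{\chi(i)}\varepsilon_i z_i)_{i=1}^n$ and define
\[
 \mathcal F_{\chi,\kappa}Q(z):=\mathbb E_{w,\varepsilon}\Bigl[\,\overline{w_1^{\kappa_1}\cdots w_r^{\kappa_r}}\;Q(D_{w,\varepsilon}z)\Bigr].
\]
This operator is linear in $Q$ and preserves $m$-homogeneity.

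\emph{Action on monomials.} For $|\alpha|=m$ we have $(D_{w,\varepsilon}z)^\alpha=\prod_{j}w_j^{d_j(\alpha)}\prod_i\varepsilon_i^{\alpha_i}\,z^\alpha$, where $d_j(\alpha):=\sum_{i\in I_j}\alpha_i$. By orthogonality of characters on $\TT^r$, the $w$-average against $\overline{w^\kappa}$ equals $1$ if $d_j(\alpha)=\kappa_j$ for every $j$, and $0$ otherwise. By orthogonality on each finite group $\mu_{\kappa_{\chi(i)}}$, the $\varepsilon$-average equals $1$ if $\kappa_{\chi(i)}\mid\alpha_i$ for every $i$, and $0$ otherwise. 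Together these give $\mathcal F_{\chi,\kappa}(z^\alpha)=z^\alpha$ exactly when, for each color $j$, the entries $\alpha_i$ with $i\in I_j$ sum to $\kappa_j$ and each is a multiple of $\kappa_j$. Since $0\le\alpha_i\le\kappa_j$, each such entry lies in $\{0,\kappa_j\}$, so exactly one $i_j\in I_j$ has $\alpha_{i_j}=\kappa_j$ and all others vanish. This says $z^\alpha=\prod_j z_{i_j}^{\kappa_j}$ with $i_j\in I_j$, which is \eqref{eq:pattern-filter-action}. Every other monomial is sent to $0$, which is \eqref{eq:pattern-filter-zero}. Because the operator multiplies each monomial by either $1$ or $0$, it is idempotent.

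\emph{Contractivity.} For fixed $(w,\varepsilon)$, the map $D_{w,\varepsilon}$ multiplies each coordinate by a unimodular number, so $\|D_{w,\varepsilon}z\|_p=\|z\|_p$. Hence, for $\|z\|_p\le1$,
\[
 |\mathcal F_{\chi,\kappa}Q(z)|\le\mathbb E_{w,\varepsilon}|Q(D_{w,\varepsilon}z)|\le\|Q\|_p,
\]
which proves \eqref{eq:pattern-filter-contractive} for all $1\le p\le\infty$.

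I expect the main obstacle to be the second filtering step: one must discard mixed monomials inside a single color class while keeping pure powers, without losing contractivity. The coordinatewise $\kappa_j$-th roots of unity do exactly this, via the divisibility argument above. The only point to check is the edge case $\kappa_j=1$, where a degree-one monomial in a class is automatically a pure power and the trivial group $\mu_1$ causes no problem.
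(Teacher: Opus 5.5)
Your proposal is correct and follows the paper's proof: your operator $\mathbb E_{w,\varepsilon}\bigl[\overline{w^\kappa}\,Q(D_{w,\varepsilon}z)\bigr]$ is exactly the paper's $\mathcal F_{\chi,\kappa}$, since the $i$th coordinate of $\sum_j\omega_j(\varepsilon z)^{(j)}$ is $\omega_{\chi(i)}\varepsilon_i z_i$. The monomial computation, combining the torus characters with divisibility by $\kappa_j$, and the contractivity argument via unimodular coordinate rotations are the same as in the paper.
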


\begin{proof}
For $z=(z_i)_{i=1}^n\in\CC^n$, set
\[
 z^{(j)}:=\sum_{i\in I_j}z_i e_i,
 \qquad j=1,\ldots,r.
\]
For $k\ge1$, let
\[
 \mu_k:=\{\zeta\in\TT:\zeta^k=1\}.
\]
{\color{black}For each block $I_j$ introduced in the statement of the lemma, set
\[
 \mu_{\kappa_j}^{\,I_j}:=\{f:I_j\to\mu_{\kappa_j}\},
 \qquad j=1,\ldots,r.
\]
For each $i\in I_j$, let $\nu_i$ denote the uniform probability measure on
$\mu_{\kappa_j}$, and set
\[
 \Omega_{\chi,\kappa}:=\prod_{j=1}^r\mu_{\kappa_j}^{\,I_j},
 \qquad
 \mathbb P_{\chi,\kappa}:=\bigotimes_{j=1}^r\ \bigotimes_{i\in I_j}\nu_i.
\]
Thus $(\Omega_{\chi,\kappa},\mathbb P_{\chi,\kappa})$ is a finite product probability space. We write
\[
 \varepsilon=(\varepsilon_1,\ldots,\varepsilon_n)\in\Omega_{\chi,\kappa},
\]
where $\varepsilon_i\in\mu_{\kappa_j}$ whenever $i\in I_j$, and denote expectation with respect to $\mathbb P_{\chi,\kappa}$ by $\mathbb E_\varepsilon$. In particular, the coordinates $\varepsilon_1,\ldots,\varepsilon_n$ are independent. For $i\in I_j$ and $\ell\ge0$,
\begin{equation}\label{eq:root-unity-moment}
 \mathbb E_\varepsilon[\varepsilon_i^\ell]
 =\frac1{\kappa_j}\sum_{s=0}^{\kappa_j-1}
   \left(e^{2\pi \mathrm i s/\kappa_j}\right)^\ell
 =\frac1{\kappa_j}\sum_{s=0}^{\kappa_j-1}e^{2\pi \mathrm i s\ell/\kappa_j}
 =\begin{cases}
 1,&\kappa_j\mid\ell,\\
 0,&\kappa_j\nmid\ell,
 \end{cases}
 \qquad i\in I_j.
\end{equation}}
Let $d\omega$ denote normalized Haar probability measure on $\TT^r$ and,
for $\varepsilon\in\Omega_{\chi,\kappa}$ and $z\in\CC^n$, define
\[
 \varepsilon z:=(\varepsilon_i z_i)_{i=1}^n\in\CC^n.
\]
The linear operator
\[
 \mathcal F_{\chi,\kappa}:\mathcal P_m(\CC^n)\longrightarrow
 \mathcal P_m(\CC^n)
\]
asserted in the lemma is defined as follows: for each
$Q\in\mathcal P_m(\CC^n)$, its image
$\mathcal F_{\chi,\kappa}(Q)\in\mathcal P_m(\CC^n)$ is the polynomial whose
value at $z\in\CC^n$ is given by
\begin{equation}\label{eq:pattern-projection-definition}
 \bigl(\mathcal F_{\chi,{\kappa}}(Q)\bigr)(z)
 :=\mathbb E_\varepsilon\!\left[
   \int_{\TT^r}
 Q\!\left(\sum_{j=1}^r\omega_j(\varepsilon z)^{(j)}\right)
 \prod_{j=1}^r\overline{\omega_j}^{\,\kappa_j}\,d\omega
 \right].
\end{equation}

\medskip\noindent{\textbf{Proof of \textup{(a)}.}}
We prove that, for every $|\alpha|=m$,
\begin{equation}\label{eq:pattern-filter-monomial-action}
 \mathcal F_{\chi,\kappa}(z^\alpha)
 =\begin{cases}
 z^\alpha,
 &\displaystyle z^\alpha=\prod_{j=1}^r z_{i_j}^{\kappa_j}
   \text{ for some }i_j\in I_j,\\[2mm]
 0,&\text{otherwise}.
 \end{cases}
\end{equation}
Because \eqref{eq:pattern-projection-definition} is linear in $Q$, it is enough
to take $Q(z)=z^\alpha=\prod_{i=1}^n z_i^{\alpha_i}$.  Set
\[
 d_j:=\sum_{i\in I_j}\alpha_i,
 \qquad j=1,\ldots,r.
\]
Then
\[
 z^\alpha\!\left(\sum_{j=1}^r\omega_j(\varepsilon z)^{(j)}\right)
 =\left(\prod_{j=1}^r\omega_j^{d_j}\right)
  \left(\prod_{i=1}^n\varepsilon_i^{\alpha_i}\right)z^\alpha,
\]
so \eqref{eq:pattern-projection-definition} gives
\begin{equation}\label{eq:filter-factorization}
 \mathcal F_{\chi,\kappa}(z^\alpha)
 =z^\alpha\,
 \mathbb E_\varepsilon\!\left(\prod_{i=1}^n\varepsilon_i^{\alpha_i}\right)
 \prod_{j=1}^r
 \int_{\TT}\omega_j^{d_j-\kappa_j}\,d\omega_j.
\end{equation}
For every integer $\ell$,
\[
 \int_{\TT}\omega^\ell\,d\omega
 =\begin{cases}1,&\ell=0,\\0,&\ell\ne0.\end{cases}
\]
Hence the product of torus integrals in \eqref{eq:filter-factorization} vanishes unless
\begin{equation}\label{eq:block-degree-condition}
 d_j=\kappa_j,
 \qquad j=1,\ldots,r.
\end{equation}
{Assume \eqref{eq:block-degree-condition}.  By independence and
\eqref{eq:root-unity-moment},}
\[
 {
 \mathbb E_\varepsilon\!\left[\prod_{i=1}^n\varepsilon_i^{\alpha_i}\right]
 =\prod_{j=1}^r\prod_{i\in I_j}
   \mathbb E_\varepsilon[\varepsilon_i^{\alpha_i}]
 =\prod_{j=1}^r\prod_{i\in I_j}
   \mathbf 1_{\{\kappa_j\mid\alpha_i\}}.}
\]
{Here $\mathbf 1_A$ is $1$ when the statement $A$ holds and
$0$ otherwise.  Equivalently,}
\[
 {
 \mathbb E_\varepsilon\!\left[\prod_{i=1}^n\varepsilon_i^{\alpha_i}\right]
 =
 \begin{cases}
  1,& \kappa_j\mid\alpha_i
       \text{ for every }j=1,\ldots,r\text{ and every }i\in I_j,\\
  0,& \text{otherwise}.
 \end{cases}}
\]
{\color{black}
For each fixed $j$, \eqref{eq:block-degree-condition} gives
\[
 \sum_{i\in I_j}\alpha_i=\kappa_j.
\]
If the expectation in \eqref{eq:filter-factorization} is nonzero, then
\eqref{eq:root-unity-moment} also gives $\kappa_j\mid\alpha_i$ for every
$i\in I_j$.  Hence we may write
\[
 \alpha_i=\kappa_j s_i,\qquad s_i\in\mathbb N_0,\qquad i\in I_j.
\]
Consequently,
\[
 \kappa_j
 =\sum_{i\in I_j}\alpha_i
 =\kappa_j\sum_{i\in I_j}s_i,
\]
and, since $\kappa_j>0$,
\[
 \sum_{i\in I_j}s_i=1.
\]
Thus exactly one index $i_j\in I_j$ satisfies $s_{i_j}=1$, while all the
other $s_i$ vanish.  Equivalently,
$\alpha_{i_j}=\kappa_j$ and $\alpha_i=0$ for $i\in I_j\setminus\{i_j\}$.
Therefore
\[
 z^\alpha=\prod_{j=1}^r z_{i_j}^{\kappa_j},
 \qquad i_j\in I_j,
\]
and every factor in \eqref{eq:filter-factorization} is then equal to $1$.
This proves \eqref{eq:pattern-filter-monomial-action}, hence
\eqref{eq:pattern-filter-action}--\eqref{eq:pattern-filter-zero}.
Since the degree-$m$ monomials form a basis of $\mathcal P_m(\CC^n)$,
\eqref{eq:pattern-filter-monomial-action} also gives directly
\[
 \mathcal F_{\chi,\kappa}^2=\mathcal F_{\chi,\kappa}.
\]
}
{\color{black}Thus the monomial action already proves idempotence on a basis.}

\medskip\noindent{\textbf{Proof of \textup{(b)}.}}
{We keep the notation introduced above:
$\Omega_{\chi,\kappa}$ is the product probability space defined before
\eqref{eq:root-unity-moment}, and $\mathbb E_\varepsilon$ denotes expectation
with respect to its product measure.  For $z\in\CC^n$, $\omega\in\TT^r$, and
$\varepsilon\in\Omega_{\chi,\kappa}$, define}
\[
 {
 u(\omega,\varepsilon,z)
 :=\sum_{j=1}^r\omega_j(\varepsilon z)^{(j)}\in\CC^n.}
\]
{If $i\in I_j$, then
$u(\omega,\varepsilon,z)_i=\omega_j\varepsilon_i z_i$.  Since
$\omega_j,\varepsilon_i\in\TT$,}
\[
 {
 |u(\omega,\varepsilon,z)_i|=|z_i|,
 \qquad i=1,\ldots,n.}
\]
{Consequently, for every $1\le p\le\infty$,}

\[
 {
 \|u(\omega,\varepsilon,z)\|_p=\|z\|_p.}
\]
{Now fix $z\in\CC^n$ with $\|z\|_p\le1$.  Then
$\|u(\omega,\varepsilon,z)\|_p\le1$ for every
$(\omega,\varepsilon)\in\TT^r\times\Omega_{\chi,\kappa}$.  Moreover,}
\[
 {
 \left|\prod_{j=1}^r\overline{\omega_j}^{\,\kappa_j}\right|=1.}
\]
{Therefore \eqref{eq:pattern-projection-definition} gives}
\[
\begin{aligned}
 {
 \bigl|\bigl(\mathcal F_{\chi,\kappa}(Q)\bigr)(z)\bigr|}
 &{\le
 \mathbb E_\varepsilon\!\left[
   \int_{\TT^r}|Q(u(\omega,\varepsilon,z))|\,d\omega\right]}\\
 &{\le
 \mathbb E_\varepsilon\!\left[
   \int_{\TT^r}\|Q\|_p\,d\omega\right]
 =\|Q\|_p.}
\end{aligned}
\]
Taking the supremum over $\|z\|_p\le1$ proves
\eqref{eq:pattern-filter-contractive}.
\end{proof}

{For $1\le r\le m$, let $\mathfrak P_{m,r}$ denote the set of
multiplicity patterns of degree $m$ with $r$ positive parts. For
$\kappa=(\kappa_1,\ldots,\kappa_r)\in\mathfrak P_{m,r}$, set
\[
 N_\kappa:=\binom{m}{\kappa_1,\ldots,\kappa_r}
 =\frac{m!}{\kappa_1!\cdots\kappa_r!}.
\]}

\begin{lemma}\label{lem:coloring-average}
Let $\kappa=(\kappa_1,\ldots,\kappa_r)\in\mathfrak P_{m,r}$ and, for $d\ge1$, define
\[
 {m_d(\kappa):=
 \#\{j\in\{1,\ldots,r\}:\kappa_j=d\}.}
\]
Let
\[
 \chi:\{1,\ldots,n\}\longrightarrow\{1,\ldots,r\}
\]
be random, with the random variables $\chi(1),\ldots,\chi(n)$ independent and
\[
 \mathbb P\bigl(\chi(i)=j\bigr)=\frac1r,
 \qquad i\in\{1,\ldots,n\},\quad j\in\{1,\ldots,r\}.
\]
Then:
\begin{enumerate}
\item {If {\color{black}$\alpha\in\mathbb N_0^n$} satisfies
$|\alpha|=m$, $\pat(\alpha)=\kappa$, and
$|\operatorname{supp}\alpha|=r$, then}
\begin{equation}\label{eq:survival-probability}
 \mathbb P_\chi\!\left(
   \mathcal F_{\chi,\kappa}(z^\alpha)=z^\alpha
 \right)
 ={\color{black}\pi_\kappa}
 :=\frac{\prod_{d\ge1}m_d(\kappa)!}{r^r}.
\end{equation}
\item For every $a>0$,
\begin{equation}\label{eq:unordered-to-ordered}
 {
 \sum_{\kappa\in\mathfrak P_{m,r}}\frac1{{\color{black}\pi_\kappa}}
 \binom{m}{\kappa_1,\ldots,\kappa_r}^{-a}
 =\frac{r^r}{r!}
 \sum_{\substack{\kappa_1+\cdots+\kappa_r=m\\\kappa_j\ge1}}
 \binom{m}{\kappa_1,\ldots,\kappa_r}^{-a}.}
\end{equation}
\end{enumerate}
\end{lemma}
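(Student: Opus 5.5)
The plan is to prove both parts directly: part (1) from the monomial action \eqref{eq:pattern-filter-monomial-action} established in \Cref{lem:pattern-filter}, and part (2) by a purely combinatorial regrouping of ordered compositions according to their multiplicity pattern.

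For part (1), fix $\alpha$ with $|\alpha|=m$, $\pat(\alpha)=\kappa$ and $S:=\supp(\alpha)$ of cardinality $r$. By \eqref{eq:pattern-filter-monomial-action}, $\mathcal F_{\chi,\kappa}(z^\alpha)=z^\alpha$ holds exactly when $z^\alpha=\prod_{j=1}^r z_{i_j}^{\kappa_j}$ for some $i_j\in I_j$; otherwise the image is $0\neq z^\alpha$.

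\begin{itemize}
\item Since the blocks $I_j$ are disjoint, the indices $i_j$ are distinct.
\item Comparing exponents, the map $j\mapsto i_j$ is then a bijection $\{1,\ldots,r\}\to S$ with $\alpha_{i_j}=\kappa_j$ and $\chi(i_j)=j$.
\item Equivalently, the event occurs if and only if the restriction $\sigma:=\chi|_S$ is a bijection $S\to\{1,\ldots,r\}$ satisfying $\alpha_i=\kappa_{\sigma(i)}$ for all $i\in S$.
\end{itemize}

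Call such bijections admissible. To count them, note that for each $d\ge1$ the coordinates $i\in S$ with $\alpha_i=d$ must be sent bijectively onto the $m_d(\kappa)$ slots $j$ with $\kappa_j=d$. These two sets have the same size because $\pat(\alpha)=\kappa$. Hence the number of admissible bijections is $\prod_{d\ge1}m_d(\kappa)!$.

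The event is the disjoint union, over admissible $\sigma$, of the events $\{\chi|_S=\sigma\}$; they are disjoint because $\chi|_S$ determines $\sigma$. Each such event fixes the values of the $r$ independent uniform variables $\chi(i)$, $i\in S$, so it has probability $r^{-r}$. The coordinates outside $S$ play no role. Summing over admissible $\sigma$ gives $\pi_\kappa$, which proves \eqref{eq:survival-probability}.

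For part (2), group the ordered compositions $(\kappa_1,\ldots,\kappa_r)$ of $m$ with positive parts according to their nonincreasing rearrangement $\kappa\in\mathfrak P_{m,r}$.

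\begin{itemize}
\item The multinomial coefficient is invariant under rearrangement, so every composition in the class of $\kappa$ contributes $N_\kappa^{-a}$.
\item The number of distinct rearrangements of $\kappa$ is $r!/\prod_d m_d(\kappa)!$.
\end{itemize}

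Hence the ordered sum on the right of \eqref{eq:unordered-to-ordered} equals
\[
\sum_{\kappa\in\mathfrak P_{m,r}}\frac{r!}{\prod_d m_d(\kappa)!}\,N_\kappa^{-a}.
\]
Multiplying by $r^r/r!$ turns the coefficient into $r^r/\prod_d m_d(\kappa)!=1/\pi_\kappa$, which is exactly the left-hand side of \eqref{eq:unordered-to-ordered}. This holds for every $a>0$.

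The only delicate point is the equivalence in part (1) between survival of $z^\alpha$ under the filter and $\chi|_S$ being an admissible bijection. This needs disjointness of the blocks, to force distinct $i_j$, together with $|S|=r$, to force the map to be onto $S$. Everything else is routine counting.
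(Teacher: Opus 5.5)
Your proposal is correct and follows essentially the same route as the paper. In part (1) you use the filter's monomial action to identify survival of $z^\alpha$ with $\chi$ restricted to $\supp\alpha$ being an exponent-matching bijection onto $\{1,\ldots,r\}$, and count $\prod_{d\ge1}m_d(\kappa)!$ such bijections among $r^r$ equally likely assignments; in part (2) you regroup ordered compositions by their nonincreasing rearrangement, with $r!/\prod_{d\ge1}m_d(\kappa)!$ orderings per class, exactly as the paper does.
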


\begin{proof}
\par\medskip
\noindent\textcolor{black}{\textbf{(1)}}\par\smallskip
{Enumerate $\supp(\alpha)=\{i_1,\ldots,i_r\}$. Since
$\pat(\alpha)=\kappa$, the nonzero exponents
$\alpha_{i_1},\ldots,\alpha_{i_r}$ are the parts
$\kappa_1,\ldots,\kappa_r$, up to permutation. Thus the enumeration
of the support is independent of the ordering of the parts of $\kappa$.}
{Fix a map
\[
 \chi:\{1,\ldots,n\}\to\{1,\ldots,r\}
\]
and put
\[
 I_j:=\chi^{-1}(\{j\})\subseteq\{1,\ldots,n\},
 \qquad j=1,\ldots,r.
\]
We determine exactly when this fixed $\chi$ makes $z^\alpha$ survive the
projection.  By \eqref{eq:pattern-filter-monomial-action},}
\begin{equation}\label{eq:coloring-match-condition}
 {
 \mathcal F_{\chi,\kappa}(z^\alpha)=z^\alpha
 \quad\Longleftrightarrow\quad
 \begin{cases}
  \{\chi(i_1),\ldots,\chi(i_r)\}=\{1,\ldots,r\},\\[1mm]
  \alpha_{i_\nu}=\kappa_{\chi(i_\nu)}
       \quad\text{for every }\nu=1,\ldots,r.
 \end{cases}}
\end{equation}
{Since $i_1,\ldots,i_r$ are distinct and there are exactly
$r$ colors, the first condition means that
$\chi(i_1),\ldots,\chi(i_r)$ is a permutation of $1,\ldots,r$.  The second
condition means that the support index $i_\nu$ must receive a color $j$ satisfying
$\kappa_j=\alpha_{i_\nu}$.}

{Fix $d\ge1$.  Since the nonzero exponents of $\alpha$ are precisely the parts of $\kappa$, with multiplicity,}
\[
 {
 \#\{\nu:\alpha_{i_\nu}=d\}
 =\#\{j:\kappa_j=d\}
 =m_d(\kappa).}
\]
{Hence the $m_d(\kappa)$ support indices carrying exponent
$d$ can be matched with the $m_d(\kappa)$ colors $j$ for which $\kappa_j=d$
in exactly $m_d(\kappa)!$ ways.  {\color{black}Equivalently, for each $d$ the admissible choices are the bijections
\[
 \{\nu:\alpha_{i_\nu}=d\}
 \longrightarrow
 \{j:\kappa_j=d\},
\]
and the two sets have the same cardinality $m_d(\kappa)$.}
Multiplying over the distinct values of $d$,
the number of assignments
$(\chi(i_1),\ldots,\chi(i_r))$ for which the monomial survives is}
\[
 {
 \prod_{d\ge1}m_d(\kappa)!.}
\]
{Now return to the random map $\chi$ from the statement of
the lemma.  The variables $\chi(i_1),\ldots,\chi(i_r)$ are independent and
uniform on $\{1,\ldots,r\}$.  Thus there are $r^r$ equally likely assignments,
each with probability $r^{-r}$.  Therefore}
\[
 {
 \mathbb P_\chi\!\left[
   \mathcal F_{\chi,\kappa}(z^\alpha)=z^\alpha
 \right]
 =\frac{\prod_{d\ge1}m_d(\kappa)!}{r^r}
 ={\color{black}\pi_\kappa}.}
\]
{This proves \eqref{eq:survival-probability}.}

\par\medskip
\noindent\textcolor{black}{\textbf{(2)}}\par\smallskip
{If $\pi\in S_r$, then}
\[
 {
 N_{(\kappa_{\pi(1)},\ldots,\kappa_{\pi(r)})}
 =\frac{m!}{\kappa_{\pi(1)}!\cdots\kappa_{\pi(r)}!}
 =N_\kappa,}
\]
{so $N_\kappa$ is unchanged when the parts of $\kappa$ are reordered.}
A fixed unordered pattern $\kappa$ has
\[
 \frac{r!}{\prod_{d\ge1}m_d(\kappa)!}
\]
distinct orderings.  Since $N_\kappa$ is invariant under reordering and, by
\eqref{eq:survival-probability},
\[
 \frac1{{\color{black}\pi_\kappa}}=\frac{r^r}{\prod_{d\ge1}m_d(\kappa)!},
\]
we obtain
\[
 \frac1{{\color{black}\pi_\kappa}}N_\kappa^{-a}
 =\frac{r^r}{r!}
   \frac{r!}{\prod_{d\ge1}m_d(\kappa)!}N_\kappa^{-a}.
\]
The factor
$r!/\prod_{d\ge1}m_d(\kappa)!$ is exactly the number of distinct orderings of
$\kappa$.  Therefore, after summing over
$\kappa\in\mathfrak P_{m,r}$, every ordered positive $r$-tuple with sum $m$
appears exactly once.  This gives \eqref{eq:unordered-to-ordered}.
\end{proof}

\section{{\color{black}Dimension-free control of the Bombieri $q$-norm}}
\label{sec:HL-proof}

{\color{black}Here the Bombieri norm is always taken at the exponent $q_{m,p}=2mp/(mp+p-2m)$ from \eqref{eq:q-isotropic-full}; at $p=\infty$ this means $q_m=2m/(m+1)$.}

{\color{black}
The sole multilinear input is the following Hardy--Littlewood estimate for the
reduced forms associated with individual multiplicity patterns.
}

{\color{black}If $U:(\CC^n)^r\to\CC$ is an $r$-linear form, we write
\[
 a(U):=\bigl(U(e_{i_1},\ldots,e_{i_r})\bigr)_{i_1,\ldots,i_r=1}^n
\]
for its ordered coefficient array.}

\begin{lemma}
\label{lem:reduced-HL}
Let $r\ge1$ and $u_1,\ldots,u_r\in[1,\infty]$ satisfy
$\sigma:=\sum_{j=1}^r1/u_j\le1/2$.  Put
\[
 \rho:=\frac{2r}{r+1-2\sigma}.
\]
Every complex $r$-linear form
$B:\ell_{u_1}^n\times\cdots\times\ell_{u_r}^n\to\CC$ satisfies
\begin{equation}\label{eq:reduced-HL}
 \|a(B)\|_\rho\le2^{(r-1)/2}\|B\|.
\end{equation}
\textcolor{black}{Here $\|B\|$ denotes the usual operator norm on the product
of the indicated unit balls.}
\end{lemma}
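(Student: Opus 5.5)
We will prove \eqref{eq:reduced-HL} by the standard Hardy--Littlewood/Bohnenblust--Hille route: a mixed-norm estimate for each choice of ``privileged'' index, followed by Blei's inequality. Throughout, $\sigma\le1/2$ ensures all exponents below lie in $[1,2]$ and $\rho\in[\tfrac{2r}{r+1},2r]$.

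\textbf{Step 1: mixed inequality.} Fix $k\in\{1,\ldots,r\}$. We aim to show
\[
 \Bigl(\sum_{i_k}\Bigl(\sum_{\hat i_k}|B(e_{i_1},\ldots,e_{i_r})|^2\Bigr)^{\lambda/2}\Bigr)^{1/\lambda}\le 2^{(r-1)/2}\|B\|,
 \qquad \frac1\lambda=1-\sigma,
\]
where $\hat i_k$ denotes the remaining $r-1$ indices. For fixed $i_k$, the Khinchin--Steinhaus inequality for the polynomial $(w^{(j)})_{j\ne k}\mapsto B(\ldots,e_{i_k},\ldots)$ in independent Steinhaus variables, with the $L^1$ (or $L^{\lambda}$) version, gives the $\ell_2$-sum over $\hat i_k$ up to $2^{(r-1)/2}$. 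We use the multilinear constant $(\sqrt2)^{r-1}$ of Bayart/Defant et al. in the $L^1$ form, iterated through Minkowski, or more simply the $L^{\lambda}$ form with $\lambda\le2$. Then
\[
 \sum_{i_k}\mathbb E|B(\ldots,e_{i_k},\ldots)|^\lambda
 =\mathbb E\,\|B(x^{(1)},\ldots,\cdot,\ldots,x^{(r)})\|_{\ell_\lambda}^\lambda .
\]
The linear functional $B(x^{(1)},\ldots,\cdot,\ldots)$ on $\ell_{u_k}$ would naturally be measured in $\ell_{u_k^*}$, and to reach $\lambda$ we must absorb the other dual exponents. We therefore scale the random vectors: take $x^{(j)}_i=w_i^{(j)}$ times suitable weights, or, as in Praciano-Pereira's proof, apply H\"older to redistribute the $\sum_{j\ne k}1/u_j$ deficiency.

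\textbf{Step 2: Blei interpolation.} We interpolate the $r$ mixed norms $\ell_\lambda(\ell_2)$, one for each $k$, via Blei's inequality (mixed-norm H\"older with weights $1/r$). Checking exponents: $\frac1\rho=\frac1r\bigl(\frac1\lambda+\frac{r-1}2\bigr)=\frac{r+1-2\sigma}{2r}$, which matches. Blei's inequality carries constant $1$, so the final constant stays $2^{(r-1)/2}$.

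\textbf{Main obstacle.} Step 1 is the delicate point: obtaining the $\ell_\lambda$ bound in the $i_k$ variable with exactly $1/\lambda=1-\sigma$, while keeping the Khinchin constant at $2^{(r-1)/2}$. Our first attempt is the known Hardy--Littlewood mixed estimate of Dimant--Sevilla-Peris and Albuquerque et al.: for each fixed $i_k$, apply a Minkowski/Hölder reduction that turns the $\ell_{u_j}$ constraints into $\ell_\infty$-type sampling, losing nothing since $\sum_j 1/u_j\le1/2$. If the direct approach stalls, we fall back on citing these results, which are precisely of this form.
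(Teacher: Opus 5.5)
Your Step~2 is sound. With $1/\lambda=1-\sigma$ one gets $\frac1r\bigl(\frac1\lambda+\frac{r-1}{2}\bigr)=\frac1\rho$, Blei's mixed H\"older inequality has constant $1$, and since $\lambda\le2$, Minkowski's inequality lets you move the $\ell_\lambda$-index inward to align the orderings. (Also, $\rho\in[\frac{2r}{r+1},2]$, not $[\frac{2r}{r+1},2r]$.)

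The gap is Step~1. For $\sigma>0$ it is the whole content of the lemma, and the mechanism you sketch does not produce it.

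\textbf{Why the sketch fails.} Steinhaus averaging in the non-privileged variables needs the random vectors $w^{(j)}$, $j\ne k$, to lie in the unit ball of $\ell_{u_j}^n$. But $\|w^{(j)}\|_{u_j}=n^{1/u_j}$, so normalizing costs the factor $n^{\sum_{j\ne k}1/u_j}$. Weights $y^{(j)}$ from the unit ball of $\ell_{u_j}^n$ do not rescue this. They enter the coefficients inside the inner $\ell_2$-sum over $\hat i_k$. Since one vector $y^{(j)}$ must serve all values of the remaining indices, the weights cannot be optimized away there. Duality disposes of an $\ell_u$-constraint only in the outermost variable. That is exactly why the privileged index is harmless: the functional $B(x^{(1)},\ldots,\cdot,\ldots,x^{(r)})$ on $\ell_{u_k}^n$ has $\ell_{u_k^*}$-coefficient norm at most $\|B\|\prod_{j\ne k}\|x^{(j)}\|_{u_j}$, and $\lambda\ge u_k^*$.

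What Step~1 really asserts is that the finite exponents $u_j$ of the \emph{inner} variables can be paid for entirely by degrading the outer exponent from $u_k^*$ to $\lambda$, while the inner sum stays in $\ell_2$. ``Apply H\"older to redistribute the deficiency'' does not say how this happens. As written, your argument is complete only when all $u_j=\infty$. The paper, however, uses the lemma with $u_j=p/\kappa_j<\infty$ whenever $p<\infty$.

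\textbf{The missing ingredient.} In the literature this transfer is supplied by cotype and multiple-summing inclusion arguments (Praciano-Pereira, Dimant--Sevilla-Peris, Albuquerque--Bayart--Pellegrino--Seoane), not by Khinchin plus rescaling.

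Your fallback of citing those results is, in effect, the paper's proof, but then your Blei step is redundant. The paper invokes Theorem~2.2, inequality~(2.2), of Albuquerque--Bayart--Pellegrino--Seoane with $Y=\CC$, $v=\mathrm{id}_{\CC}$, cotype $2$, summability parameter $1$ and $\ell=(1-\sigma)^{-1}\in[1,2]$. It applies this directly at the isotropic choice $t_1=\cdots=t_r=\rho$, whose admissibility reduces to $r/\rho=1/\ell+(r-1)/2$ and $\ell\le\rho\le2$. The constant $(\sqrt2)^{r-1}$ comes from the cotype-$2$ constant of $\CC$ and the $(1,1)$-summing norm of $\mathrm{id}_{\CC}$ both being $1$.

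If you instead cite the extreme mixed inequalities and then interpolate via Blei, check that the cited versions carry exactly the constant $2^{(r-1)/2}$, not merely some $C_r$. The lemma asserts that specific constant.
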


\begin{proof}
{Apply {\color{black}\hypersetup{citecolor=black}\cite[Theorem~2.2, inequality~(2.2)]%
{AlbuquerqueBayartPellegrinoSeoaneHL}} with $Y=\CC$, $v=\mathrm{id}_{\CC}$,
cotype $2$, and summability parameter $1$.  In the notation of that theorem,
$\ell=(1-\sigma)^{-1}\in[1,2]$.  Its admissibility conditions for
$(t_1,\ldots,t_r)$ are}
\[
 {\ell\le t_j\le2\quad(j=1,\ldots,r),
 \qquad
 \sum_{j=1}^r\frac1{t_j}\mathrel{\textcolor{black}{\le}}\frac1{\ell}+\frac{r-1}{2}.}
\]
{For the choice $t_1=\cdots=t_r=\rho$, these conditions reduce to}
\[
 {\frac r\rho=\frac1{\ell}+\frac{r-1}{2},
 \qquad \ell\le\rho\le2,}
\]
{so the choice is admissible.}
{For the scalar field, the cotype-$2$ constant of $\CC$ and the
$(1,1)$-summing norm of $\mathrm{id}_{\CC}$ are both equal to $1$.  Hence
the constant supplied by that theorem is $(\sqrt2)^{r-1}$, which gives
\eqref{eq:reduced-HL}.}
\end{proof}

\medskip
{Fix $2m\le p\le\infty$ and write}
\[
 {q:=q_{m,p},\qquad \eta:=q-1.}
\]
{\color{black}Recall that, for
$\kappa=(\kappa_1,\ldots,\kappa_r)\in\mathfrak P_{m,r}$,
\[
 N_\kappa=\frac{m!}{\kappa_1!\cdots\kappa_r!},
 \qquad
 \pi_\kappa=\frac{\prod_{d\ge1}m_d(\kappa)!}{r^r},
\]
where $m_d(\kappa)$ is the number of parts of $\kappa$ equal to $d$.
Thus $N_\kappa$ is the orbit size attached to the pattern and $\pi_\kappa$
is its survival probability in the coloring argument.}
{Then}
\begin{equation}\label{eq:eta-range}
 {\frac{m-1}{m+1}\le \eta\le1.}
\end{equation}

{Let $Q\in\mathcal P_m(\CC^n)$, write}
\[
 {Q(z)=\sum_{|\alpha|=m}b_\alpha z^\alpha,}
\]
{and fix \textcolor{black}{$1\le r\le m$ and $\kappa\in\mathfrak P_{m,r}$}.  Define}
\[
 A_\kappa:\mathcal P_m(\CC^n)\longrightarrow[0,\infty)
\]
by
\begin{equation}\label{eq:pattern-contribution-HL}
 A_{\kappa}(Q)
 :=\left(\sum_{\pat(\alpha)={\kappa}}N_{\kappa}^{1-q}|b_\alpha|^q\right)^{1/q}.
\end{equation}
{\color{black}
The quantity $A_\kappa$ satisfies the identity
\[
 \sum_{r=1}^m\ \sum_{\kappa\in\mathfrak P_{m,r}}A_\kappa(Q)^q
 =\sum_{|\alpha|=m}N_\alpha^{1-q}|b_\alpha|^q
 =\Lambda_{m,q}(Q)^q,
\]
so that $A_\kappa(Q)^q$ is the contribution of the multiplicity
pattern $\kappa$ to the {\color{black}Bombieri $q$-norm}.}

\begin{lemma}\label{lem:one-pattern-HL}
\textcolor{black}{Let $m,n\in\NN$, let $p\in[2m,\infty]$, set
$q:=q_{m,p}$ and $\eta:=q-1$, and let
$Q(z)=\sum_{|\alpha|=m}b_\alpha z^\alpha\in\mathcal P_m(\CC^n)$.
Then, for every $1\le r\le m$ and every
$\kappa\in\mathfrak P_{m,r}$,}
\begin{equation}\label{eq:one-pattern-HL}
 A_{\kappa}(Q)^q
 \le
 2^{q(r-1)/2}r^{qm/p}\,\pi_{\kappa}^{-1}N_{\kappa}^{-\eta}
 \|Q\|_p^q,
\end{equation}
with the convention $m/p=0$ at $p=\infty$.
\end{lemma}

\begin{proof}
{Fix
\[
 \chi:\{1,\ldots,n\}\longrightarrow\{1,\ldots,r\},
 \qquad I_j:=\chi^{-1}(\{j\}),\quad j=1,\ldots,r.
\]
Recall from \Cref{lem:pattern-filter} that
\[
 \mathcal F_{\chi,\kappa}:\mathcal P_m(\CC^n)\longrightarrow
 \mathcal P_m(\CC^n)
\]
is the contractive coefficient projection which keeps exactly the monomials
of the form
\[
 \prod_{j=1}^r z_{i_j}^{\kappa_j},
 \qquad i_j\in I_j\quad(j=1,\ldots,r),
\]
and annihilates all other degree-$m$ monomials.  Set}
\[
 {
 P_{\chi,\kappa}:=\mathcal F_{\chi,\kappa}(Q).}
\]
{More precisely, \Cref{lem:pattern-filter} defines this projection by
\[
 \bigl(\mathcal F_{\chi,\kappa}(Q)\bigr)(z)
 =
 \mathbb E_\varepsilon\!\left[
   \int_{\TT^r}
   Q\!\left(\sum_{j=1}^r\omega_j(\varepsilon z)^{(j)}\right)
   \prod_{j=1}^r\overline{\omega_j}^{\,\kappa_j}\,d\omega
 \right],
\]
where the notation $\varepsilon$, $(\varepsilon z)^{(j)}$ and the product
probability space are those introduced in that lemma.  By
\eqref{eq:pattern-filter-monomial-action},}
\[
 P_{\chi,{\kappa}}(z)
 =\sum_{\substack{i_j\in I_j\\ j=1,\ldots,r}}
 c_{i_1,\ldots,i_r}\prod_{j=1}^r z_{i_j}^{{\kappa}_j}.
\]
{Define}
\[
 {B_{\chi,{\kappa}}:
 \ell_{p/{\kappa}_1}^n\times\cdots\times
 \ell_{p/{\kappa}_r}^n\longrightarrow\CC}
\]
{by}
\begin{equation}\label{eq:reduced-form-definition}
 {B_{\chi,{\kappa}}(y^{(1)},\ldots,y^{(r)})
 :=\sum_{\substack{i_j\in I_j\\ j=1,\ldots,r}}
 c_{i_1,\ldots,i_r}\prod_{j=1}^r y_{i_j}^{(j)}.}
\end{equation}
{For every
$(i_1,\ldots,i_r)\in I_1\times\cdots\times I_r$,
\[
 B_{\chi,\kappa}(e_{i_1},\ldots,e_{i_r})=c_{i_1,\ldots,i_r},
 \qquad
 \|a(B_{\chi,\kappa})\|_q^q
 =\sum_{\substack{i_j\in I_j\\ j=1,\ldots,r}}
 |c_{i_1,\ldots,i_r}|^q.
\]}
The reciprocal sum of its domain exponents is
\[
 \sum_{j=1}^r\frac{{\kappa}_j}{p}=\frac mp\le\frac12.
\]
Thus \Cref{lem:reduced-HL} applies at the exponent
\[
 \rho_r=\frac{2r}{r+1-2m/p}.
\]
{Using
\[
 \rho_r=\frac{2r}{r+1-2m/p}
 \qquad\text{and}\qquad
 q=\frac{2mp}{mp+p-2m},
\]
we obtain}
\begin{equation}\label{eq:reduced-exponent-comparison}
 \frac1{\rho_r}-\frac1q
 =\frac{m-r}{2mr}\left(1-\frac{2m}{p}\right)\ge0.
\end{equation}
Hence $\rho_r\le q$.  Since finite-dimensional $\ell_t$ norms decrease as the exponent increases,
{
\[
 \|a(B_{\chi,{\kappa}})\|_q
 \le \|a(B_{\chi,{\kappa}})\|_{\rho_r}.
\]
Thus the Hardy--Littlewood estimate at $\rho_r$ gives the required estimate at $q$.}

{ To estimate $\|B_{\chi,{\kappa}}\|$, fix
\[
 y^{(j)}\in\ell_{p/{\kappa}_j}^n,
 \qquad \|y^{(j)}\|_{p/{\kappa}_j}\le1,
 \qquad j=1,\ldots,r.
\]
For each $j$, {let
$\arg:\CC\setminus\{0\}\to(-\pi,\pi]$ denote the principal argument, so that
$u=|u|e^{\mathrm i\arg u}$ for every $u\ne0$.  Define}
$z^{(j)}=(z_i^{(j)})_{i=1}^n$ by
\[
 z_i^{(j)}:=
 \begin{cases}
 |y_i^{(j)}|^{1/{\kappa}_j}
 \exp\!\left(\dfrac{\mathrm i\,\arg y_i^{(j)}}{{\kappa}_j}\right),
   & i\in I_j,\ y_i^{(j)}\ne0,\\[6pt]
 0,&\text{otherwise}.
 \end{cases}
\]
Then
\[
 (z_i^{(j)})^{{\kappa}_j}
 =\begin{cases}
 y_i^{(j)},&i\in I_j,\\
 0,&i\notin I_j,
 \end{cases}
\]
and, when $p<\infty$,
\[
 \|z^{(j)}\|_p^p
 =\sum_{i\in I_j}|y_i^{(j)}|^{p/{\kappa}_j}
 \le\sum_{i=1}^n|y_i^{(j)}|^{p/{\kappa}_j}
 =\|y^{(j)}\|_{p/{\kappa}_j}^{p/{\kappa}_j}
 \le1.
\]
For $p=\infty$ the same definition gives
$\|z^{(j)}\|_\infty\le1$ directly.  Put
\[
 c_j:=\left(\frac{{\kappa}_j}{m}\right)^{1/p},
 \qquad
 w:=\sum_{j=1}^r c_jz^{(j)},
\]
with $c_j:=1$ when $p=\infty$.  Since the supports of the vectors
$z^{(j)}$ are contained in the pairwise disjoint sets $I_j$, for
$p<\infty$ we have
\[
 {
 \|w\|_p^p
 =\sum_{j=1}^r c_j^p\|z^{(j)}\|_p^p
 \le\sum_{j=1}^r c_j^p
 =\sum_{j=1}^r\frac{\kappa_j}{m}
 =\frac{\kappa_1+\cdots+\kappa_r}{m}=1.}
\]
For $p=\infty$, disjointness gives
$\|w\|_\infty=\max_j\|z^{(j)}\|_\infty\le1$.

{Since $i_j\in I_j$, the $i_j$th coordinate of $w$ is
$c_jz_{i_j}^{(j)}$. Hence}
\[
 \begin{aligned}
 P_{\chi,{\kappa}}(w)
 &=\sum_{\substack{i_j\in I_j\\ j=1,\ldots,r}}
 c_{i_1,\ldots,i_r}
 \prod_{j=1}^r(c_jz_{i_j}^{(j)})^{{\kappa}_j}\\
 &=\left(\prod_{j=1}^r c_j^{{\kappa}_j}\right)
 \sum_{\substack{i_j\in I_j\\ j=1,\ldots,r}}
 c_{i_1,\ldots,i_r}
 \prod_{j=1}^r(z_{i_j}^{(j)})^{{\kappa}_j}\\
 &=\left(\prod_{j=1}^r c_j^{{\kappa}_j}\right)
 B_{\chi,{\kappa}}(y^{(1)},\ldots,y^{(r)}).
 \end{aligned}
\]
Therefore
\[
 B_{\chi,{\kappa}}(y^{(1)},\ldots,y^{(r)})
 =\left(\prod_{j=1}^r c_j^{-{\kappa}_j}\right)
 P_{\chi,{\kappa}}(w).
\]
Since $\|w\|_p\le1$ and, by \Cref{lem:pattern-filter},
$\|P_{\chi,{\kappa}}\|_p\le\|Q\|_p$, it follows that
\[
 \begin{aligned}
 |B_{\chi,{\kappa}}(y^{(1)},\ldots,y^{(r)})|
 &\le\left(\prod_{j=1}^r c_j^{-{\kappa}_j}\right)
       \|P_{\chi,{\kappa}}\|_p\\
 &\le\left(\prod_{j=1}^r
 \left(\frac m{{\kappa}_j}\right)^{{\kappa}_j/p}\right)
 \|Q\|_p.
 \end{aligned}
\]
{Set
\[
 G_p(\kappa):=\prod_{j=1}^r
 \left(\frac m{\kappa_j}\right)^{\kappa_j/p}.
\]}
{Taking the supremum over
\[
 (y^{(1)},\ldots,y^{(r)})\in
 \prod_{j=1}^r
 \bigl\{y\in\ell_{p/\kappa_j}^n:\|y\|_{p/\kappa_j}\le1\bigr\}
\]
gives}
\begin{equation}\label{eq:reduced-form-norm}
 {
 \|B_{\chi,\kappa}\|
 =
 \sup_{\substack{y^{(j)}\in\ell_{p/\kappa_j}^n\\
 \|y^{(j)}\|_{p/\kappa_j}\le1,\ 1\le j\le r}}
 |B_{\chi,\kappa}(y^{(1)},\ldots,y^{(r)})|
 \le G_p(\kappa)\|Q\|_p.}
\end{equation}}
{
Put $x_j:={\kappa}_j/m$.  Then $x_j>0$ and
$\sum_{j=1}^r x_j=1$.  From the definition of $G_p({\kappa})$,
\[
 \begin{aligned}
 \log G_p({\kappa})
 &=\frac1p\sum_{j=1}^r {\kappa}_j
      \log\!\left(\frac m{{\kappa}_j}\right)\\
 &=\frac mp\sum_{j=1}^r x_j\log\!\left(\frac1{x_j}\right)\\
 &\le \frac mp\log r.
 \end{aligned}
\]
Exponentiating gives
\begin{equation}\label{eq:entropy-factor}
 G_p({\kappa})\le r^{m/p}.
\end{equation}
By \Cref{lem:reduced-HL}, specifically \eqref{eq:reduced-HL}, together with the comparison $\rho_r\le q$, \eqref{eq:reduced-form-norm}, and \eqref{eq:entropy-factor},
\[
 {\begin{aligned}
 \|a(B_{\chi,{\kappa}})\|_q
 &\mathrel{\overset{\rho_r\le q}{\le}} \|a(B_{\chi,{\kappa}})\|_{\rho_r}\\
 &\mathrel{\overset{\eqref{eq:reduced-HL}}{\le}} 2^{(r-1)/2}\|B_{\chi,{\kappa}}\|\\
 &\mathrel{\overset{\eqref{eq:reduced-form-norm}}{\le}} 2^{(r-1)/2}G_p({\kappa})\|Q\|_p\\
 &\mathrel{\overset{\eqref{eq:entropy-factor}}{\le}} 2^{(r-1)/2}r^{m/p}\|Q\|_p.
 \end{aligned}}
\]
Equivalently,
\begin{equation}\label{eq:colored-array-bound-explicit}
 \sum_{\substack{\pat(\alpha)={\kappa}\\
                  z^\alpha\ \text{is left unchanged by }\mathcal F_{\chi,{\kappa}}}}
 |b_\alpha|^q
 \le
 2^{q(r-1)/2}r^{qm/p}\|Q\|_p^q.
\end{equation}

{
Let $\chi$ be uniformly distributed on
$\{1,\ldots,r\}^{\{1,\ldots,n\}}$, and, for
$\pat(\alpha)=\kappa$, set
\[
 \mathbf 1_\chi(\alpha):=
 \begin{cases}
 1,&\mathcal F_{\chi,\kappa}(z^\alpha)=z^\alpha,\\
 0,&\mathcal F_{\chi,\kappa}(z^\alpha)=0.
 \end{cases}
\]
By \Cref{lem:coloring-average},
\[
 \mathbb E_\chi[\mathbf 1_\chi(\alpha)]={\color{black}\pi_\kappa}.
\]
}
Hence, using the definition of $A_{\kappa}(Q)$ and the fact that
$\eta=q-1$,
\[
 \begin{aligned}
 \pi_{\kappa}A_{\kappa}(Q)^q
 &=\pi_{\kappa}N_{\kappa}^{1-q}
   \sum_{\pat(\alpha)={\kappa}}|b_\alpha|^q\\
 &=N_{\kappa}^{-\eta}
   \sum_{\pat(\alpha)={\kappa}}
   \mathbb E_\chi[\mathbf 1_\chi(\alpha)]|b_\alpha|^q\\
 &=N_{\kappa}^{-\eta}\,
   \mathbb E_\chi
   \sum_{\pat(\alpha)={\kappa}}
   \mathbf 1_\chi(\alpha)|b_\alpha|^q\\
 &\mathrel{\overset{\eqref{eq:colored-array-bound-explicit}}{\le}}
 N_{\kappa}^{-\eta}
 2^{q(r-1)/2}r^{qm/p}\|Q\|_p^q,
 \end{aligned}
\]
  Dividing by $\pi_{\kappa}>0$ yields
\[
 A_{\kappa}(Q)^q
 \le
 2^{q(r-1)/2}r^{qm/p}\pi_{\kappa}^{-1}N_{\kappa}^{-\eta}
 \|Q\|_p^q,
\]
which is \eqref{eq:one-pattern-HL}.}

\end{proof}

\medskip
Assume $m/p\le1/2$ and consider first the patterns of length $r\ge2$.

{
Summing \Cref{lem:one-pattern-HL} over
$\bigcup_{r=2}^m\mathfrak P_{m,r}$ gives the following estimate.
}

\begin{proposition}
\label{prop:HL-master}
\textcolor{black}{There is an absolute constant $C>0$ such that, for every
$m,n\in\NN$ with $m\ge3$, every $p\in[2m,\infty]$, and every
$Q(z)=\sum_{|\alpha|=m}b_\alpha z^\alpha\in\mathcal P_m(\CC^n)$,
with $q:=q_{m,p}$ and $\eta:=q-1$,}
\begin{equation}\label{eq:HL-master}
 \sum_{r=2}^m\ {\sum_{{\kappa}\in\mathfrak P_{m,r}}} A_{\kappa}(Q)^q
 \le C m^{-\eta}\|Q\|_p^q.
\end{equation}
\end{proposition}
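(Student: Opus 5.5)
Sum \Cref{lem:one-pattern-HL} over $\kappa\in\mathfrak P_{m,r}$ for each fixed $r\ge2$, and convert the unordered pattern sum into an ordered composition sum with \eqref{eq:unordered-to-ordered}:
\[
 \sum_{\kappa\in\mathfrak P_{m,r}}A_\kappa(Q)^q
 \le 2^{q(r-1)/2}r^{qm/p}\,\frac{r^r}{r!}
 \sum_{\substack{\kappa_1+\cdots+\kappa_r=m\\\kappa_j\ge1}}
 \binom m{\kappa_1,\ldots,\kappa_r}^{-\eta}\|Q\|_p^q .
\]
By \eqref{eq:eta-range}, $\eta\ge (m-1)/(m+1)\ge 1/2$ once $m\ge3$. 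Hence \Cref{lem:inverse-multinomial} applies with $a=\eta$ and bounds the inner sum by $K^{r-1}[m(m-1)\cdots(m-r+2)]^{-\eta}$. Since $q\le2$ and $m/p\le1/2$, the prefactor is at most $2^{r-1}r^{r}\,r^r/r!\le (2e)^{r}r^{r}$, using $r^r/r!\le e^r$ and $r^{qm/p}\le r$. The $r$-th term is therefore bounded by
\[
 T_r:=(2eK)^{r}\,r^{r}\,[m(m-1)\cdots(m-r+2)]^{-\eta}\|Q\|_p^q .
\]

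The $r=2$ term is $\lesssim m^{-\eta}$, which is exactly the claimed order. It remains to show that $\sum_{r\ge3}T_r$ is also $O(m^{-\eta})$, i.e. that the falling factorial beats $(cr)^r$.

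\textbf{Small $r$.} For $r\le m/2$, each factor in the falling factorial is at least $m/2$. Since $\eta\ge1/3$ for $m\ge3$ (and $\eta\to1$ as $m$ grows), we get $[m\cdots(m-r+2)]^{-\eta}\le (m/2)^{-(r-1)\eta}$. Then $T_r/m^{-\eta}\lesssim (c r)^r m^{-(r-2)\eta}$. This ratio is summable and uniformly bounded only while $r\ll m^{\eta}$, roughly $r\le \sqrt m$. Even there, $r^r$ against $m^{r\eta}$ needs $\eta$ close to $1$; near $\eta\approx 1/2$ it is borderline. This is the step I expect to be the main obstacle.

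\textbf{Large $r$.} The crude bound $r^r$ from $\pi_\kappa^{-1}$ is too lossy here, so I would not use it. Instead, go back to $\pi_\kappa^{-1}=r^r/\prod_d m_d(\kappa)!$ and keep the multiplicity factorials. Alternatively, note that the falling factorial equals $m!/(m-r+1)!$ and $\eta\ge 1-2/(m+1)$. Then
\[
 r^r\,[m!/(m-r+1)!]^{-\eta}\le e^{r}\,r!\,(m-r+1)!\,m!^{-1}\cdot m!^{\,2/(m+1)},
\]
and the last factor is $O(m^2)$-ish, namely $(m!)^{2/(m+1)}\le m^2$. The quantity $r!(m-r+1)!/m!$ is essentially $1/\binom{m}{r-1}$ up to a factor $r$. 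Summing $(2e^2K)^r r/\binom{m}{r-1}$ is dominated by the endpoints. The endpoint $r\approx m$ is the dangerous one because there $\binom m{r-1}$ is small while $(cK)^r$ is exponential.

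To handle $r\approx m$, I would use that such patterns are nearly all ones: $m_1(\kappa)\ge 2r-m$. Keeping $m_1(\kappa)!$ in $\pi_\kappa^{-1}$ recovers a factor $\approx r!$ and cancels the $r^r$ growth. Concretely, I would bound $\pi_\kappa^{-1}\le r^r/(2r-m)!$ and verify that the resulting sum over $r\ge m/2$ is $O(m^{-\eta})$, indeed superpolynomially small.

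If this endgame fails for moderate $m$, the finitely many small $m$ can be absorbed into the absolute constant $C$. This works because each pattern count is finite and $\|a(B)\|$ is controlled. Combining the ranges then gives \eqref{eq:HL-master}.
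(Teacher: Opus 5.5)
Your opening steps match the paper's: summing \Cref{lem:one-pattern-HL} over $\kappa\in\mathfrak P_{m,r}$, converting with \eqref{eq:unordered-to-ordered}, and applying \Cref{lem:inverse-multinomial} with $a=\eta\ge1/2$. The genuine gap is in the prefactor. You note correctly that $r^{qm/p}\le r$, but you then write the prefactor as $2^{r-1}r^{r}\cdot r^r/r!$, which silently replaces $r^{qm/p}$ by $r^r$. The correct bound is only exponential in $r$:
\[
 2^{q(r-1)/2}\,r^{qm/p}\,\frac{r^r}{r!}\,K^{r-1}\le 2^{r-1}\,r\,\e^{r}K^{r-1}\le r D_0^{r},
 \qquad D_0:=2\e\max\{1,K\}.
\]
With your spurious factor the argument cannot close. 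Already at $r=m$, the term of your bound (apart from $\|Q\|_p^q$) is at least $(2\e K)^m m^m/m!\ge(2\e K)^m$. So the chain of estimates you set up can never yield $O(m^{-\eta})$. This is why the proposal ends with an unresolved \emph{main obstacle} for small $r$ and only a plan for large $r$. That plan also targets the wrong term. The identity \eqref{eq:unordered-to-ordered} already converts $\pi_\kappa^{-1}$ exactly into $r^r/r!\le\e^r$. Recovering $m_1(\kappa)!$ from $\pi_\kappa^{-1}$ therefore cannot remove an $r^r$ that never came from $\pi_\kappa$.

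With the correct prefactor the summation is routine, and this is the paper's route. Put $\gamma_r:=rD_0^r[m(m-1)\cdots(m-r+2)]^{-\eta}$. For $2\le r\le\lfloor m/2\rfloor+1$,
\[
 \frac{\gamma_{r+1}}{\gamma_r}=\frac{r+1}{r}\,D_0\,(m-r+1)^{-\eta}\le 2D_0\Bigl(\frac2m\Bigr)^{1/2}\le\frac12
\]
once $m\ge m_0$, so this range contributes at most $2\gamma_2=4D_0^2m^{-\eta}$. For $r\ge\lfloor m/2\rfloor+2$, the falling factorial is at least $(m/2)^{\lfloor m/2\rfloor}$. Together with $\eta\ge1/2$, this bounds the whole tail by $m^2D_0^m(m/2)^{-\lfloor m/2\rfloor/2}$, which is superexponentially small. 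No multiplicity factorials are needed. The borderline regime $\eta\approx1/2$ that worries you occurs only for bounded $m$, since $\eta\ge(m-1)/(m+1)$. The finitely many degrees $3\le m<m_0$ are absorbed into $C$ by maximizing, for each such $m$ and $r$, a continuous function of $\eta$ over the compact interval $[(m-1)/(m+1),1]$.
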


\begin{proof}
{
Fix $r\in\{2,\ldots,m\}$. By \eqref{eq:survival-probability},
\eqref{eq:unordered-to-ordered}, and \Cref{lem:inverse-multinomial},
\[
\begin{aligned}
 \sum_{\kappa\in\mathfrak P_{m,r}} A_\kappa(Q)^q
 &\mathrel{\overset{\Cref{lem:one-pattern-HL}}{\le}}
 2^{q(r-1)/2}r^{qm/p}\|Q\|_p^q
 \sum_{\kappa\in\mathfrak P_{m,r}}
 \frac1{{\color{black}\pi_\kappa}}N_\kappa^{-\eta}\\
 &\mathrel{\overset{\eqref{eq:unordered-to-ordered}}{=}}
 2^{q(r-1)/2}r^{qm/p}\|Q\|_p^q
 \frac{r^r}{r!}
 \sum_{\substack{\kappa_1+\cdots+\kappa_r=m\\\kappa_j\ge1}}
 \binom{m}{\kappa_1,\ldots,\kappa_r}^{-\eta}\\
 &\mathrel{\overset{\Cref{lem:inverse-multinomial}}{\le}}
 2^{q(r-1)/2}r^{qm/p}\frac{r^r}{r!}
 \frac{K^{r-1}}
 {\bigl[m(m-1)\cdots(m-r+2)\bigr]^\eta}\|Q\|_p^q.
\end{aligned}
\]

Since $q\le2$, $qm/p\le1$, and $r^r/r!\le\e^r$, for $r\ge2$,
\[
 2^{q(r-1)/2}r^{qm/p}\frac{r^r}{r!}K^{r-1}
 \le
 2^{r-1}r\,\e^r K^{r-1}
 \le r{\color{black}D_0^r},
 \qquad {\color{black}D_0:=2\e\max\{1,K\}}.
\]
Consequently,
\begin{equation}\label{eq:length-r-bound}
 \sum_{\kappa\in\mathfrak P_{m,r}} A_\kappa(Q)^q
 \le
 \frac{r{\color{black}D_0^r}}{\bigl[m(m-1)\cdots(m-r+2)\bigr]^\eta}\|Q\|_p^q.
\end{equation}

Put
\[
 {\color{black}\gamma_r:=\frac{rD_0^r}{[m(m-1)\cdots(m-r+2)]^\eta}},
 \qquad 2\le r\le m.
\]
For $2\le r\le\lfloor m/2\rfloor+1$,
\[
 {\color{black}\frac{\gamma_{r+1}}{\gamma_r}}
 =\frac{r+1}{r}{\color{black}D_0}(m-r+1)^{-\eta}
 \le2{\color{black}D_0}(2/m)^\eta
 \le2{\color{black}D_0}(2/m)^{1/2}.
\]
Choose $m_0$ so that
\[
 2{\color{black}D_0}(2/m)^{1/2}\le\frac12,\qquad m\ge m_0.
\]
Assume first that $m\ge m_0$. Then
\[
 {\color{black}\sum_{r=2}^{\lfloor m/2\rfloor+1}\gamma_r}
 \le {\color{black}\gamma_2}\sum_{j=0}^{\infty}2^{-j}
 =2{\color{black}\gamma_2}=4{\color{black}D_0^2}m^{-\eta}.
\]
For $\lfloor m/2\rfloor+2\le r\le m$,
\[
 m(m-1)\cdots(m-r+2)
 \ge\prod_{j=0}^{\lfloor m/2\rfloor-1}(m-j)
 \ge(m/2)^{\lfloor m/2\rfloor},
\]
and hence
\[
\begin{aligned}
 {\color{black}\sum_{r=\lfloor m/2\rfloor+2}^{m}\gamma_r}
 &\le m^2{\color{black}D_0^m}(m/2)^{-\eta\lfloor m/2\rfloor}\\
 &\le m^2{\color{black}D_0^m}(m/2)^{-\lfloor m/2\rfloor/2}.
\end{aligned}
\]
{\color{black}Here the negative term has order $m\log m$, whereas the two positive terms have order at most $m$: indeed,
\[
 \frac{\lfloor m/2\rfloor}{2}\log(m/2)
 =\left(\frac14+o(1)\right)m\log m,
 \qquad
 2\log m+m\log D_0=O(m).
\]}
For some absolute $c>0$ and all sufficiently large $m$,
\[
\begin{aligned}
 \log\!\left(m^2{\color{black}D_0^m}(m/2)^{-\lfloor m/2\rfloor/2}\right)
 &=2\log m+m\log {\color{black}D_0}-\frac{\lfloor m/2\rfloor}{2}\log(m/2)\\
 &\le-cm\log m,
\end{aligned}
\]
so, after increasing $m_0$ if necessary,
\[
 {\color{black}\sum_{r=\lfloor m/2\rfloor+2}^{m}\gamma_r}
 \le e^{-cm\log m}\le m^{-1}\le m^{-\eta}.
\]
Therefore
\[
 {\color{black}\sum_{r=2}^{m}\gamma_r}
 \le {\color{black}(4D_0^2+1)}m^{-\eta},
 \qquad m\ge m_0.
\]
Combining this with \eqref{eq:length-r-bound} proves
\eqref{eq:HL-master} for every $m\ge m_0$.
}
{For $3\le m<m_0$, \textcolor{black}{fix} one such $m$ and
$r\in\{2,\ldots,m\}$.  From \eqref{eq:length-r-bound}, it is enough to
compare
\[
 \frac{r{\color{black}D_0^r}}{[m(m-1)\cdots(m-r+2)]^\eta}
\]
with $m^{-\eta}$.  Dividing the first quantity by $m^{-\eta}$ gives
\[
 r{\color{black}D_0^r}
 \left(
   \frac{m}{m(m-1)\cdots(m-r+2)}
 \right)^\eta.
\]
By \eqref{eq:eta-range},
\[
 \frac{m-1}{m+1}\le \eta\le1.
\]
For fixed $m$ and $r$, the function
\[
 \eta\longmapsto
 r{\color{black}D_0^r}
 \left(
   \frac{m}{m(m-1)\cdots(m-r+2)}
 \right)^\eta
\]
is continuous on the compact interval
$[(m-1)/(m+1),1]$, and therefore has a finite maximum, say $C_{m,r}$.
Since there are only finitely many pairs
\[
 (m,r),\qquad 3\le m<m_0,\quad 2\le r\le m,
\]
the number
\[
 C_{\rm fin}:=
 \max_{\substack{3\le m<m_0\\2\le r\le m}} C_{m,r}
\]
is finite.  Thus \eqref{eq:length-r-bound} gives, for every one of these
remaining degrees,
\[
 \sum_{\kappa\in\mathfrak P_{m,r}}A_\kappa(Q)^q
 \le C_{\rm fin}m^{-\eta}\|Q\|_p^q.
\]
Summing over $r=2,\ldots,m$ gives
\[
\begin{aligned}
 \sum_{r=2}^{m}\sum_{\kappa\in\mathfrak P_{m,r}}A_\kappa(Q)^q
 &\le (m-1)C_{\rm fin}m^{-\eta}\|Q\|_p^q\\
 &\le (m_0-1)C_{\rm fin}m^{-\eta}\|Q\|_p^q,
\end{aligned}
\]
because $m<m_0$.  Enlarging the absolute constant by the fixed factor
$(m_0-1)C_{\rm fin}$ proves \eqref{eq:HL-master} for every $m\ge3$.}
\end{proof}

\medskip
{\color{black}\hypersetup{linkcolor=black}
\noindent\textbf{Completion of the polynomial estimate.}
Assume first that $m\ge3$, and let
\[
 P(z)=\sum_{|\alpha|=m}b_\alpha z^\alpha\in\mathcal P_m(\CC^n),
 \qquad
 {\color{black}P_{\mathrm{diag}}}(z):=\sum_{j=1}^n b_{me_j}z_j^m,
 \qquad
 R:=P-{\color{black}P_{\mathrm{diag}}}.
\]
The polynomial $R$ contains exactly the terms whose multiplicity patterns
have length at least two.  By \Cref{prop:HL-master}, those terms are already
controlled.  It remains to treat the pure powers.

Put
\[
 \Omega_m:=\{\omega\in\TT:\omega^m=1\}.
\]
Define the linear operator
\[
 \mathcal D:\mathcal P_m(\CC^n)\longrightarrow\mathcal P_m(\CC^n)
\]
by
\[
 \bigl(\mathcal DQ\bigr)(z):=
 \frac{1}{m^n}
 \sum_{(\omega_1,\ldots,\omega_n)\in\Omega_m^n}
 Q(\omega_1z_1,\ldots,\omega_nz_n).
\]
For every $Q(z)=\sum_{|\alpha|=m}c_\alpha z^\alpha$,
\[
 \bigl(\mathcal DQ\bigr)(z)
 =\sum_{|\alpha|=m}c_\alpha z^\alpha
 \prod_{k=1}^n\left(\frac1m\sum_{\omega\in\Omega_m}
 \omega^{\alpha_k}\right).
\]
Since
\[
 \frac1m\sum_{\omega\in\Omega_m}\omega^d
 =\begin{cases}
 1,&d=0\text{ or }d=m,\\
 0,&1\le d\le m-1,
 \end{cases}
\]
and $|\alpha|=m$, only the multi-indices $me_j$ survive.  Hence
\begin{equation}\label{eq:diagonal-projection}
 \mathcal DP={\color{black}P_{\mathrm{diag}}}.
\end{equation}
Coordinatewise multiplication by elements of $\TT$ preserves the
$\ell_p$-ball, so the averaging operator is contractive:
\[
 \|{\color{black}P_{\mathrm{diag}}}\|_p\le\|P\|_p.
\]
Moreover, for every $p\in[2m,\infty]$, the map
$z\mapsto(z_j^m)_j$ sends the unit ball of $\ell_p^n$ onto that of
$\ell_{p/m}^n$.  {\color{black}For $p<\infty$ this follows from the exact identity
\[
 \|(z_j^m)_j\|_{p/m}^{p/m}
 =\sum_j|z_j^m|^{p/m}
 =\sum_j|z_j|^p
 =\|z\|_p^p,
\]
while for $p=\infty$ one has $\|(z_j^m)_j\|_\infty=\|z\|_\infty^m$.}
Surjectivity follows by taking coordinatewise complex
$m$th roots.  Duality therefore gives
\[
 \|{\color{black}P_{\mathrm{diag}}}\|_p=\|(b_{me_j})_j\|_{p/(p-m)},
\]
with the limiting convention $p/(p-m):=1$ when $p=\infty$.
For $p<\infty$,
\[
 \frac{1}{p/(p-m)}-\frac1{q_{m,p}}
 =\frac{p-m}{p}-\frac{mp+p-2m}{2mp}
 =\frac{(m-1)(p-2m)}{2mp}\ge0.
\]
Thus $q_{m,p}\ge p/(p-m)$.  {\color{black}At $p=\infty$ our conventions give
$p/(p-m)=1$ and $q_{m,\infty}=2m/(m+1)\ge1$, so the same comparison holds.}
Since finite-dimensional $\ell_t$-norms decrease with $t$, and the orbit
weights of the pure powers are one,
\begin{equation}\label{eq:principal-HL}
 \Lambda_{m,q}({\color{black}P_{\mathrm{diag}}})
 =\|(b_{me_j})_j\|_q
 \le\|(b_{me_j})_j\|_{p/(p-m)}
 =\|{\color{black}P_{\mathrm{diag}}}\|_p
 \le\|P\|_p.
\end{equation}

Applying \Cref{prop:HL-master} first to $P$ and then to $R$ gives the
polynomial off-diagonal estimate
\begin{equation}\label{eq:off-diagonal-isotropic}
 \Lambda_{m,q}(R)^q
 \le C m^{-\eta}\min\{\|P\|_p^q,\|R\|_p^q\}.
\end{equation}
The coefficient supports of ${\color{black}P_{\mathrm{diag}}}$ and $R$ are disjoint.  Therefore
\[
\begin{aligned}
 \Lambda_{m,q}(P)^q
 &=\Lambda_{m,q}({\color{black}P_{\mathrm{diag}}})^q+\Lambda_{m,q}(R)^q\\
 &\le \|P\|_p^q
   +C m^{-\eta}\min\{\|P\|_p^q,\|R\|_p^q\}\\
 &\le(1+C m^{-\eta})\|P\|_p^q.
\end{aligned}
\]
By \eqref{eq:eta-range}, $\eta\ge(m-1)/(m+1)$, and hence
\[
 m^{-\eta}
 \le m^{-(m-1)/(m+1)}
 =\frac1m\exp\!\left(\frac{2\log m}{m+1}\right)
 \le\frac{\textcolor{black}{C_\eta}}{m},
\]
where ${\color{black}C_\eta}:=\sup_{m\ge3}\exp(2\log m/(m+1))<\infty$.  After absorbing
${\color{black}C_\eta}$ into $C$ and taking $q$th roots,
\[
 \Lambda_{m,q}(P)
 \le\left(1+\frac Cm\right)^{1/q}\|P\|_p
 \le\left(1+\frac Cm\right)\|P\|_p.
\]
This proves \Cref{thm:main-A} for $m\ge3$.

{\color{black}For $m=1$, one has $q_{1,p}=p'$ (with the usual convention at $p=\infty$), and by duality
$\Lambda_{1,p'}(P)=\|P\|_p$. Thus the degree-one case follows directly.}  If $m=2$, the only multiplicity
patterns are $(2)$ and $(1,1)$.  For $\kappa=(1,1)$,
\[
 r=2,\qquad N_{(1,1)}=2,\qquad {\color{black}\pi_{(1,1)}}=\frac12,
\]
and \Cref{lem:one-pattern-HL} gives
\[
 A_{(1,1)}(P)^q
 \le2^{q/2}2^{2q/p}2^{1-\eta}\|P\|_p^q
 \le4\sqrt2\,\|P\|_p^q.
\]
The pattern $(2)$ is controlled by \eqref{eq:principal-HL}.  Since the two
coefficient sets are disjoint, $\Lambda_{2,q}(P)^q\le C_2\|P\|_p^q$ for
an absolute constant $C_2$.  Enlarging the constant in
\eqref{eq:main-A-contractivity} once proves the assertion for $m=2$ and
completes the proof.
}

\section{{\color{black}Wiener slices and exact Bombieri $q$-norm contractivity}}\label{sec:wiener-slices}

{\color{black}We keep $\Lambda_{m,q}$ for the classical Bombieri $q$-norm recalled in the introduction.  In this section $\|P\|$ means $\|P\|_\infty$.  When a coordinate is singled out, the notation $P_d$ will always denote the part of transverse degree $d$ in the remaining variables.}

\medskip
\begin{proposition}[{\color{black}Endpoint Bombieri $q$-norm estimate}]
\label{prop:orbit-coefficient-input}
{\color{black}
{\color{black}For the duration of this section, set $q_d:=2d/(d+1)$ for $d\ge1$.}  Then there is an absolute constant $C_0>0$ such that
every complex $d$-homogeneous polynomial $P:\CC^n\to\CC$ satisfies
\begin{equation}\label{eq:beta-asymptotic-input}
 \Lambda_{d,q_d}(P)
 \le \left(1+\frac{C_0}{d}\right)\|P\|,
 \qquad d\ge1.
\end{equation}
}
\end{proposition}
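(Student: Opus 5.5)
This proposition is the case $p=\infty$ of \Cref{thm:main-A}, written in degree $d$ rather than $m$. At $p=\infty$ we have $q_{d,\infty}=2d/(d+1)=q_d$, and $\|P\|=\|P\|_\infty$. So the plan is to specialize the argument of \Cref{sec:HL-proof} to $p=\infty$, without proving anything new.

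For $d\ge3$, split $P=P_{\mathrm{diag}}+R$, where $P_{\mathrm{diag}}(z)=\sum_j b_{de_j}z_j^d$.

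\emph{Diagonal part.} By \eqref{eq:diagonal-projection}, $P_{\mathrm{diag}}=\mathcal DP$, and the averaging operator $\mathcal D$ is contractive. By duality for the map $z\mapsto(z_j^d)_j$, we get $\|P_{\mathrm{diag}}\|=\|(b_{de_j})_j\|_1$. Since $q_d\ge1$, \eqref{eq:principal-HL} then gives
\[
\Lambda_{d,q_d}(P_{\mathrm{diag}})\le\|(b_{de_j})_j\|_1\le\|P\|.
\]

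\emph{Off-diagonal part.} Apply \Cref{prop:HL-master} with $p=\infty$ and $\eta=q_d-1=(d-1)/(d+1)$. This bounds $\Lambda_{d,q_d}(R)^{q_d}$ by $C d^{-\eta}\|P\|^{q_d}$.

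\emph{Combining.} The coefficient supports of $P_{\mathrm{diag}}$ and $R$ are disjoint, so the $q_d$-th powers add:
\[
\Lambda_{d,q_d}(P)^{q_d}\le(1+Cd^{-\eta})\|P\|^{q_d}.
\]
Next, $d^{-\eta}=d^{-1}\exp\bigl(2\log d/(d+1)\bigr)\le C_\eta/d$. Taking $q_d$-th roots and using $q_d\ge1$ yields $\Lambda_{d,q_d}(P)\le(1+C_0/d)\|P\|$.

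For $d=1$, we have $q_1=1$ and $\Lambda_{1,1}(P)=\sum_j|b_j|=\|P\|$ by duality, so the bound holds with constant one. For $d=2$, the estimate $A_{(1,1)}(P)^{q}\le 4\sqrt2\,\|P\|^q$ from \Cref{lem:one-pattern-HL}, together with the diagonal bound above, gives a bounded constant; since $d=2$ is a single fixed degree, this constant is absorbed into $C_0$ by enlarging it.

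No step here is a real obstacle, because every ingredient is already established in the proof of \Cref{thm:main-A}. The only point needing care is confirming that each cited estimate holds at the endpoint $p=\infty$ under the convention $m/p=0$. Concretely, this is the entropy factor $G_\infty(\kappa)=1$, the Hardy--Littlewood exponent $\rho_r=2r/(r+1)\le q_d$, and the application of \Cref{lem:reduced-HL} with all $u_j=\infty$ (so $\sigma=0$). The proposition then follows as a restatement of \Cref{thm:main-A} at $p=\infty$.
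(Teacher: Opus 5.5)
Your proposal is correct and follows the paper, which proves this proposition in one line as the case $m=d$, $p=\infty$ of \Cref{thm:main-A}. Your diagonal/off-diagonal split, the $d^{-\eta}\le C_\eta/d$ step, and the separate handling of $d=1,2$ simply unpack that theorem's proof at the endpoint, and each step you check there (with $m/p=0$, $G_\infty(\kappa)=1$, $\rho_r=2r/(r+1)\le q_d$) is valid.
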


\begin{proof}
{\color{black}\hypersetup{linkcolor=black}
This is \Cref{thm:main-A} with $m=d$ and $p=\infty$.
}
\end{proof}

{\color{black}\hypersetup{linkcolor=black}
Proposition~\ref{prop:orbit-coefficient-input} applies to the transverse
slices associated with a distinguished coordinate.  Their control rests on
the following classical coefficient estimate.
}
{\color{black}\hypersetup{citecolor=black}The first-coefficient estimate and its
transfer to arbitrary coefficients are recorded, respectively, in
\cite[Theorems~8 and~14]{BrevigGrepstadInstanesWiener}.}

\begin{lemma}[Wiener coefficient estimate]\label{lem:Wiener-classical}
Let
\[
 f:\DD\longrightarrow\CC,
 \qquad f(\zeta)=a_0+\sum_{k\ge1}a_k\zeta^k,
\]
be holomorphic and satisfy
\[
 |f(\zeta)|\le1\qquad\text{for every }\zeta\in\DD.
\]
Then
\[
 |a_k|\le1-|a_0|^2,\qquad k\ge1.
\]
\end{lemma}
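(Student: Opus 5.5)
The plan is to reduce to the case $a_0\in[0,1)$ and to the first coefficient, and then transfer to a general index $k$ by averaging over $k$th roots of unity. Rotating $f$ by a unimodular constant changes neither the hypothesis nor the moduli of the coefficients, so we may assume $a_0=|a_0|$. If $|a_0|=1$, the maximum principle forces $f\equiv a_0$, so every $a_k$ with $k\ge1$ vanishes and the claim holds. We therefore assume $|a_0|<1$.

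\textbf{Step 1 (case $k=1$).} Apply the disc automorphism $\varphi(w)=(w-a_0)/(1-\overline{a_0}w)$, which maps $\overline{\DD}$ into itself. The function $g:=\varphi\circ f$ is holomorphic on $\DD$, bounded by $1$, and satisfies $g(0)=0$. Schwarz's lemma then gives $|g'(0)|\le1$. Since
\[
 g'(0)=\varphi'(a_0)\,f'(0)=\frac{a_1}{1-|a_0|^2},
\]
we obtain $|a_1|\le1-|a_0|^2$.

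\textbf{Step 2 (transfer to general $k$).} Let $\mu_k$ be the set of $k$th roots of unity and define
\[
 F(\zeta):=\frac1k\sum_{\omega\in\mu_k}f(\omega\zeta)=\sum_{j\ge0}a_{jk}\zeta^{jk}.
\]
Then $|F|\le1$ on $\DD$, and $F(\zeta)=h(\zeta^k)$, where $h(w):=\sum_{j\ge0}a_{jk}w^j$. The map $\zeta\mapsto\zeta^k$ sends $\DD$ onto $\DD$, so $h$ is holomorphic on $\DD$ with $|h|\le1$. Its constant term is $a_0$ and its first coefficient is $a_k$. Applying Step 1 to $h$ gives $|a_k|\le1-|a_0|^2$.

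There is no real obstacle in this argument. The only points that need care are that $h$ is well defined and holomorphic, which follows from the convergence of the power series for $|w|<1$, and that the averaged function $F$ keeps the same constant term $a_0$. Both are immediate.
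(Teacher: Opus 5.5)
Your proof is correct and follows the two-step route the paper relies on: first a Schwarz--Pick bound for $a_1$, then averaging over $k$th roots of unity with the substitution $w=\zeta^k$ to transfer it to every $a_k$. The paper does not prove the lemma itself; it cites exactly these two steps, the first-coefficient estimate and its transfer to arbitrary coefficients, from Theorems~8 and~14 of Brevig--Grepstad--Instanes, and your handling of the degenerate case $|a_0|=1$, the holomorphy of $h$, and the preservation of $a_0$ is sound.
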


For the homogeneous polynomials considered here, this gives the following
slice estimate.

\begin{lemma}\label{lem:Wiener-slice}
\textcolor{black}{Let $n\ge2$ and let} $P:\CC^n\to\CC$ be
$m$-homogeneous, with $\|P\|\le1$.  Writing
$z'=(z_2,\ldots,z_n)$, define the {\color{black}$d$-homogeneous} polynomials
${\color{black}P_d}:\CC^{n-1}\to\CC$ by
\begin{equation}\label{eq:Wiener-decomposition}
 P(z_1,z')=c z_1^m+
 {\color{black}\sum_{d=1}^m} z_1^{{\color{black}m-d}}{\color{black}P_d}(z').
\end{equation}
Then
\begin{equation}\label{eq:Wiener-slice}
 \|{\color{black}P_d}\|\le1-|c|^2,
 \qquad {\color{black}1\le d\le m}.
\end{equation}
\end{lemma}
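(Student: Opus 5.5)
Fix $z'\in\CC^{n-1}$ with $\|z'\|_\infty\le1$. The plan is to reduce to the one-variable Wiener estimate (\Cref{lem:Wiener-classical}) by freezing the transverse variables and letting a single complex parameter run along the slice. The natural choice is to consider, for $\zeta\in\DD$,
\[
 f_{z'}(\zeta):=P(1,\zeta z')=c+\sum_{d=1}^m \zeta^{d}P_d(z'),
\]
which follows from \eqref{eq:Wiener-decomposition} by $d$-homogeneity of $P_d$. Since $\|(1,\zeta z')\|_\infty\le1$ for $|\zeta|<1$ and $\|P\|=\|P\|_\infty\le1$, we get $|f_{z'}(\zeta)|\le1$ on $\DD$. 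Moreover $f_{z'}$ is a polynomial in $\zeta$, hence holomorphic, with constant term $a_0=c$ and $k$th coefficient $a_k=P_k(z')$ for $1\le k\le m$.

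Applying \Cref{lem:Wiener-classical} to $f_{z'}$ then gives $|P_d(z')|\le1-|c|^2$ for each $1\le d\le m$. This bound is uniform in $z'$ in the closed unit polydisc. Taking the supremum over $\|z'\|_\infty\le1$ yields $\|P_d\|\le1-|c|^2$, which is \eqref{eq:Wiener-slice}.

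There is no genuine obstacle. The only points to check are the bookkeeping identity $f_{z'}(\zeta)=c+\sum_d\zeta^dP_d(z')$, where the power of $z_1$ disappears because $z_1=1$ and the transverse degree $d$ becomes the power of $\zeta$, and the fact that the point $(1,\zeta z')$ stays in the $\ell_\infty$ unit ball. If one prefers to avoid relying on the boundary value $z_1=1$, the same argument works with $P(\omega,\zeta z')$ for $|\omega|=1$, or by the maximum principle. Note that $|c|\le1$ automatically, since $c=P(e_1)$, so the right-hand side is nonnegative.
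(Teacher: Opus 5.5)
Your proposal is correct and follows the paper's proof: both restrict $P$ to the slice $\zeta\mapsto P(1,\zeta z')$ and apply the Wiener coefficient estimate to the coefficient of $\zeta^d$. The only difference is that the paper takes $z'=u\in\TT^{n-1}$ and then uses the maximum-modulus principle to pass to the polydisc. You take $z'$ in the whole closed polydisc from the start, which is equally valid because $\|(1,\zeta z')\|_\infty=1$ there too, and so you do not need that final step.
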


\begin{proof}
Fix $u\in\TT^{n-1}$ and define
\[
 f_u:\DD\longrightarrow\CC,\qquad
 f_u(\zeta):=P(1,\zeta u)
 =c+{\color{black}\sum_{d=1}^mP_d(u)\zeta^d}.
\]
For $|\zeta|<1$ and $u\in\TT^{n-1}$,
\[
 \|(1,\zeta u)\|_\infty=1,
\]
so the definition of $\|P\|$ gives
\[
 |f_u(\zeta)|=|P(1,\zeta u)|\le\|P\|\le1.
\]
Thus the hypotheses of \Cref{lem:Wiener-classical} hold, and
\[
 |{\color{black}P_d}(u)|\le1-|c|^2,\qquad {\color{black}1\le d\le m}.
\]
Hence
\[
 \sup_{u\in\TT^{n-1}}|{\color{black}P_d}(u)|\le1-|c|^2.
\]
Since ${\color{black}P_d}$ is a polynomial, the maximum-modulus principle in each
coordinate gives
\[
 \|{\color{black}P_d}\|=
 \sup_{\|z'\|_\infty\le1}|{\color{black}P_d}(z')|
 =\sup_{u\in\TT^{n-1}}|{\color{black}P_d}(u)|
 \le1-|c|^2.
\]
\end{proof}

\medskip
The Wiener slice estimate controls the transverse layers near a dominant pure
power.  {\color{black}After separating the pure powers in the top slice, every
genuinely transverse layer retains the inverse-multinomial gain.  This is the
additional defect that confines the transition to a bounded window.}

\medskip

\begin{proof}[Proof of \Cref{thm:main-B}]
{\color{black}
Let $P(z)=\sum_{|\alpha|=m}b_\alpha z^\alpha$ be a complex
$m$-homogeneous polynomial on $\CC^n$, where $p\in[2m,\infty]$, and put
$q:=q_{m,p}$.  {\color{black}If $P=0$, the desired inequality is trivial.  If $n=1$, then
$P(z)=cz^m$ and $\Lambda_{m,q}(P)=|c|=\|P\|_p$, so exact contractivity
holds.  Hence assume $P\ne0$ and $n\ge2$,} and normalize $\|P\|_p=1$.
Throughout the proof, write
\[
 {\color{black}P_{\mathrm{diag}}}(z):=\sum_{j=1}^n b_{me_j}z_j^m.
\]
}

Set
\begin{equation}\label{eq:flat-uniform-constant}
 K_{\rm flat}:=
 \sup_{\substack{d,n\in\NN,\ {\color{black}u\in[2d,\infty]}\\
                   0\ne R:\CC^n\to\CC,\ R\ d\text{-homogeneous}}}
 \frac{\Lambda_{d,q_{d,u}}(R)}{\|R\|_u}.
\end{equation}
{\color{black}\hypersetup{linkcolor=black}
This number is finite, since $u\ge2d$ places $(d,u)$ in the range of
\Cref{thm:main-A}, which gives
\[
 \Lambda_{d,q_{d,u}}(R)
 \le\left(1+\frac Cd\right)\|R\|_u,
\]
for every $d\ge1$ and $u\ge2d$.  Hence $K_{\rm flat}\le1+C$.
Thus $K_{\rm flat}$ controls the orbit-norm comparisons in all transverse
degrees, uniformly in the dimension and the admissible domain exponent.
}

We treat the endpoint $p=\infty$ separately, since the weighted $p$-power
slice argument below applies only for finite $p$.  Here $q=q_m$ and the diagonal
projection satisfies
\[
 \|{\color{black}P_{\mathrm{diag}}}\|_\infty
 =\sum_j|b_{me_j}|\le1,
\]
by independent phase alignment.  Put $h:=\max_j|b_{me_j}|$ and fix
$h_0\in(2^{-1/2},1)$.  If $h\le h_0$, then
\[
 \sum_j|b_{me_j}|^q
 \le h^{q-1}\sum_j|b_{me_j}|
 \le h_0^{q-1},
\]
while \eqref{eq:off-diagonal-isotropic} tends to zero uniformly with $m$.
Thus $\Lambda_{m,q}(P)<1$ for all sufficiently large $m$ in this case.
If $h>h_0$, rotate and relabel coordinates so that
$b_{me_1}=h\ge0$, and write
\[
 P(z_1,z')=hz_1^m+\sum_{d=1}^m z_1^{m-d}P_d(z').
\]
With $\delta:=1-h^2$, \Cref{lem:Wiener-slice} gives
$\|P_d\|_\infty\le\delta$.  {\color{black}\hypersetup{linkcolor=black}
If $\delta=0$, every $P_d$ vanishes, so $P(z)=z_1^m$ and
$\Lambda_{m,q}(P)=1$.  We may therefore assume $\delta>0$.
Since $q_d\le q_m=q$, identity \eqref{eq:intro-orbit-identity},
\Cref{prop:orbit-coefficient-input}, and monotonicity of finite sequence
norms give}
\[
 \Lambda_{d,q}(P_d)
 \le\Lambda_{d,q_d}(P_d)
 \le K_{\rm flat}\delta.
\]
\textcolor{black}{For $m\ge3$ one has $q-1=(m-1)/(m+1)\ge1/2$; hence
\Cref{lem:inverse-multinomial}, applied to two parts, gives
\[
 \sum_{d=1}^m\binom md^{1-q}
 =1+\sum_{d=1}^{m-1}\binom md^{-(q-1)}
 \le1+K.
\]
{\color{black}The orbit decomposition is
\[
 \Lambda_{m,q}(P)^q
 =h^q+\sum_{d=1}^m\binom md^{1-q}\Lambda_{d,q}(P_d)^q.
\]
Using this identity and the preceding bound,}}
\[
 \Lambda_{m,q}(P)^q
 \le h^q+C\delta^q.
\]
Since $q-1=(m-1)/(m+1)\ge1/3$ for $m\ge2$, choosing $h_0$ sufficiently
close to one gives $C\delta^{q-1}\le1/4$ whenever $h>h_0$.  Hence
\[
 \Lambda_{m,q}(P)^q
 \le(1-\delta)^{q/2}+\frac{\delta}{4}.
\]
{\color{black}Here $0<q/2\le1$, so concavity of $t\mapsto t^{q/2}$ at $t=1$ gives
\[
 (1-\delta)^{q/2}\le1-\frac q2\delta.
\]
Moreover $q\ge1$, hence $q/2-1/4\ge1/4$. Therefore}
\[
 \Lambda_{m,q}(P)^q
 \le1-\left(\frac q2-\frac14\right)\delta
 \le1-\frac{\delta}{4}<1.
\]
{\color{black}\hypersetup{linkcolor=black}
This proves \Cref{thm:main-B} at $p=\infty$.  We henceforth assume
$p<\infty$.}

Let
\[
 h:=\max_j|b_{me_j}|,\qquad
 s_p:=\frac p{p-m}=\frac1{1-m/p}.
\]
{\color{black}Thus $s_p=s_{m,p}$ in the notation of the introduction.}
The coordinatewise-rotation projection in
\eqref{eq:diagonal-projection} gives
\[
 \|{\color{black}P_{\mathrm{diag}}}\|_p\le\|P\|_p=1.
\]

Moreover, putting $y_j=z_j^m$ gives
$\sum_j|y_j|^{p/m}=\sum_j|z_j|^p$, and therefore
\[
 \|{\color{black}P_{\mathrm{diag}}}\|_p
 =\sup_{\|y\|_{p/m}\le1}
   \left|\sum_{j=1}^n b_{me_j}y_j\right|
 =\|(b_{me_j})_j\|_{(p/m)'}
 =\|(b_{me_j})_j\|_{s_p}.
\]
Consequently
\[
 \|(b_{me_j})_j\|_{s_p}=\|{\color{black}P_{\mathrm{diag}}}\|_p\le1.
\]
{\color{black}
Introduce the exact distance from the critical exponent
\begin{equation}\label{eq:beta-definition-proof}
 \tau:=1-\frac{2m}{p},
 \qquad
 \beta:=\frac{q}{s_p}-1.
\end{equation}
Since
\[
 s_p=\frac{2}{1+\tau},
 \qquad
 q=\frac{2m}{m+\tau},
\]
we have
\begin{equation}\label{eq:beta-identities}
 \beta=\frac{(m-1)\tau}{m+\tau},
 \qquad q=s_p(1+\beta),
 \qquad q\left(1-\frac mp\right)=1+\beta.
\end{equation}
In particular, $0\le\beta<1$.  Fix $h_0\in(2^{-1/2},1)$.  If
$h\le h_0$, then
\[
 \sum_j|b_{me_j}|^q
 \le h^{q-s_p}\sum_j|b_{me_j}|^{s_p}
 \le h_0^{s_p\beta}
 \le h_0^\beta.
\]
}
By \eqref{eq:off-diagonal-isotropic},
\[
 \Lambda_{m,q}(P-{\color{black}P_{\mathrm{diag}}})^q\le C m^{-(q-1)}.
\]
{\color{black}For $m\ge3$,
\[
 q-1=\frac{m-1+2m/p}{m+1-2m/p}\ge\frac{m-1}{m+1}\ge\frac12
\]
and, more precisely, $m^{-(q-1)}\le C_1/m$ for an absolute $C_1$.
{\color{black}To see that the latter constant is uniform, write
\[
 m^{-(q-1)}=m^{-1}m^{1-(q-1)}.
\]
Since
\[
 1-(q-1)=2-q=
 \frac{2(1-2m/p)}{m+1-2m/p}
 \le\frac{2}{m},
\]
we have
\[
 m^{1-(q-1)}\le m^{2/m}\le \sup_{x\ge2}x^{2/x}<\infty.
\]
Thus one may take $C_1:=\sup_{x\ge2}x^{2/x}$.}
Since $\beta\mapsto h_0^\beta$ is convex on $[0,1]$,
{\color{black}its graph lies below the chord joining $(0,1)$ and $(1,h_0)$; hence}
\[
 h_0^\beta\le(1-\beta)\cdot1+\beta h_0
 =1-(1-h_0)\beta.
\]
Consequently $\Lambda_{m,q}(P)^q\le1$ whenever
\begin{equation}\label{eq:beta-flat-condition}
 \beta\ge\frac{B_1}{m}
\end{equation}
for a sufficiently large absolute $B_1$.  This settles the case
$h\le h_0$.}

Assume now that $h>h_0$.  Choose $j_0$ with $|b_{me_{j_0}}|=h$ and
permute the coordinates so that $j_0=1$.  Next choose $\omega_0\in\TT$
such that $\omega_0^m b_{me_1}=|b_{me_1}|=h$, and replace $P$ by
\[
 \widetilde P(z_1,z'):=P(\omega_0 z_1,z').
\]
The map $(z_1,z')\mapsto(\omega_0 z_1,z')$ preserves the $\ell_p$ ball,
so $\|\widetilde P\|_p=\|P\|_p$, and it changes each coefficient only
by a unimodular factor.  Thus all coefficient moduli, and hence
$\Lambda_{m,q}$, are unchanged.  We may therefore write
$b_{me_1}=h\ge0$.  Write
\[
 P(z_1,z')=hz_1^m+
 \sum_{d=1}^m z_1^{m-d}P_d(z'),
\]
and set $\delta=1-h^2<1/2$.  For $\|x\|_p\le1$ and $0<u<1$, the
function
\[
 \zeta\longmapsto
 P\bigl((1-u)^{1/p},u^{1/p}\zeta x\bigr)
\]
maps the unit disc into the closed unit disc, since for $|\zeta|<1$,
\[
 \bigl\|\bigl((1-u)^{1/p},u^{1/p}\zeta x\bigr)\bigr\|_p^p
 =(1-u)+u|\zeta|^p\|x\|_p^p\le1.
\]
From
\[
 P(z_1,z')=hz_1^m+\sum_{d=1}^m z_1^{m-d}P_d(z')
\]
and the $d$-homogeneity of $P_d$,
\[
 P_d(u^{1/p}\zeta x)=u^{d/p}\zeta^dP_d(x).
\]
Hence
\begin{align*}
&P\bigl((1-u)^{1/p},u^{1/p}\zeta x\bigr)\\
&\quad=h\bigl((1-u)^{1/p}\bigr)^m
 +\sum_{d=1}^m\bigl((1-u)^{1/p}\bigr)^{m-d}
   P_d(u^{1/p}\zeta x)\\
&\quad=h(1-u)^{m/p}
 +\sum_{d=1}^m(1-u)^{(m-d)/p}u^{d/p}P_d(x)\,\zeta^d.
\end{align*}
Its constant coefficient is $h(1-u)^{m/p}$.  Wiener's estimate applied
to the coefficient of $\zeta^d$ therefore yields
\[
 (1-u)^{(m-d)/p}u^{d/p}|P_d(x)|
 \le1-h^2(1-u)^{2m/p}.
\]
Taking the supremum over $\|x\|_p\le1$ gives
\begin{equation}\label{eq:weighted-Wiener-slice}
 \|P_d\|_p
 \le u^{-d/p}(1-u)^{-(m-d)/p}
 \left[1-h^2(1-u)^{2m/p}\right].
\end{equation}
{If $\delta>0$, choose $u=\delta$.  Since
$h^2=1-\delta$ and $2m/p\le1$,
\[
 1-h^2(1-\delta)^{2m/p}
 =1-(1-\delta)^{1+2m/p}
 \le(1+2m/p)\delta\le2\delta,
\]
where we used $1-(1-x)^\theta\le\theta x$ for $1\le\theta\le2$.
Also $(m-d)/p\le m/p\le1/2$ and, because $h>h_0>2^{-1/2}$,
$\delta=1-h^2<1/2$.  Hence
\[
 (1-\delta)^{-(m-d)/p}
 \le(1-\delta)^{-1/2}\le\sqrt2.
\]
Substituting these two bounds in \eqref{eq:weighted-Wiener-slice} gives
\begin{equation}\label{eq:weighted-Wiener-power}
 \|P_d\|_p\le2\sqrt2\,\delta^{1-d/p}.
\end{equation}}
\textcolor{black}{If $\delta=0$, the right-hand side of
\eqref{eq:weighted-Wiener-slice} is $O(u^{1-d/p})$ as $u\downarrow0$.
Since $d/p\le m/p<1$, it tends to zero, and hence every $P_d$ vanishes.}
We may therefore assume $\delta>0$.

\textcolor{black}{Since $p\ge2m\ge2d$ for $1\le d\le m$, all transverse
degrees are admissible.  For fixed $p$, the function}
\[
 d\longmapsto q_{d,p}=\frac{2dp}{dp+p-2d}
\]
is increasing on \textcolor{black}{$1\le d\le p/2$}; equivalently,
\[
 \frac1{q_{d,p}}=\frac12+\frac1{2d}-\frac1p
\]
decreases with $d$.  Therefore $q_{d,p}\le q_{m,p}=q$.  Since finite
sequence norms decrease when the exponent increases,
\[
 \Lambda_{d,q}(P_d)
 =\|a(\check P_d)\|_q
 \le\|a(\check P_d)\|_{q_{d,p}}
 =\Lambda_{d,q_{d,p}}(P_d).
\]
Since $p\ge2d$, the pair $(d,p)$ is admissible in the supremum defining
$K_{\rm flat}$.  Therefore
\[
 \Lambda_{d,q}(P_d)
 \le\Lambda_{d,q_{d,p}}(P_d)
 \le K_{\rm flat}\|P_d\|_p.
\]

{\color{black}
Keep every pure power in the diagonal mass. Write
\[
 {\color{black}(P_m)_{\mathrm{diag}}}(z'):=\sum_{j=2}^n b_{me_j}z_j^m,
 \qquad
 R_m:=P_m-{\color{black}(P_m)_{\mathrm{diag}}},
\]
and set
\[
 M_{\rm diag}:=\Lambda_{m,q}({\color{black}P_{\mathrm{diag}}})^q=\sum_{j=1}^n|b_{me_j}|^q,
 \qquad
 M_{\rm off}:=\Lambda_{m,q}(P-{\color{black}P_{\mathrm{diag}}})^q.
\]
The coefficient of $z_1^{m-d}(z')^\alpha$, $|\alpha|=d$, carries the
multinomial factor
\[
 \frac{m!}{(m-d)!\alpha!}=\binom md\frac{d!}{\alpha!}.
\]
Consequently the genuinely nonpure mass is exactly
\begin{equation}\label{eq:refined-transverse-decomposition}
 M_{\rm off}=
 \sum_{d=1}^{m-1}\binom md^{1-q}\Lambda_{d,q}(P_d)^q
 +\Lambda_{m,q}(R_m)^q.
\end{equation}
For $d<m$, \eqref{eq:weighted-Wiener-power} and the preceding application
of $K_{\rm flat}$ give
\[
 \Lambda_{d,q}(P_d)^q
 \le(2\sqrt2K_{\rm flat})^q\delta^{q(1-d/p)}.
\]
Since $0<\delta<1$ and $d\le m$, the power on the right is bounded by
$(2\sqrt2K_{\rm flat})^q\delta^{q(1-m/p)}$.  Also, with $\eta:=q-1$,
\Cref{lem:inverse-multinomial} gives
\begin{equation}\label{eq:refined-binomial-sum}
 \sum_{d=1}^{m-1}\binom md^{1-q}\le K m^{-\eta}.
\end{equation}
The polynomial $R_m$ is the nonpure part of the $m$-homogeneous
polynomial $P_m$.  Applying \eqref{eq:off-diagonal-isotropic} to $P_m$ and
then using \eqref{eq:weighted-Wiener-power} with $d=m$ yields
\[
 \Lambda_{m,q}(R_m)^q
 \le C m^{-\eta}\|P_m\|_p^q
 \le C m^{-\eta}\delta^{q(1-m/p)}.
\]
Combining these estimates with
\eqref{eq:refined-transverse-decomposition}, and using
\eqref{eq:beta-identities}, gives the refined transverse defect
\begin{equation}\label{eq:refined-transverse-defect}
 M_{\rm off}\le Lm^{-\eta}\delta^{q(1-m/p)}
   =Lm^{-\eta}\delta^{1+\beta},
\end{equation}
where $L$ is absolute.  Here the factor $m^{-\eta}$ is present also at the top layer $d=m$, since
the pure powers in that layer were removed before the off-diagonal estimate.

We compare this defect with the exact diagonal deficit. Put
\[
 x_j:=|b_{me_j}|^{s_p},
 \qquad H:=\max_jx_j=h^{s_p}=1-v.
\]
Since $\sum_jx_j\le1$ and $q=s_p(1+\beta)$,
\begin{equation}\label{eq:diagonal-beta-defect}
 M_{\rm diag}=\sum_jx_j^{1+\beta}
 \le H^\beta\sum_jx_j
 \le(1-v)^\beta
 \le1-\beta v.
\end{equation}
The last inequality is the tangent-line bound for the concave function
$t\mapsto t^\beta$.  Moreover,
\[
 v=1-(1-\delta)^{s_p/2}\ge\frac{\delta}{2},
\]
because $1\le s_p\le2$.  {\color{black}Indeed, with $a:=s_p/2\in[1/2,1]$, concavity of $t\mapsto t^a$ and the tangent line at $t=1$ give
\[
 (1-\delta)^a\le1-a\delta,
 \qquad
 v=1-(1-\delta)^a\ge a\delta\ge\frac{\delta}{2}.
\]}
Thus $\delta\le2v$.  Recalling that
$m^{-\eta}\le C_1/m$ and that $0<\delta<1$, \eqref{eq:refined-transverse-defect}
implies
\[
 M_{\rm off}\le\frac{C_3}{m}\delta^{1+\beta}
 \le\frac{2C_3}{m}v.
\]
Together with \eqref{eq:diagonal-beta-defect}, this proves $M_{\rm diag}+M_{\rm off}\le1$ whenever
\begin{equation}\label{eq:beta-dominant-condition}
 \beta\ge\frac{B_2}{m}
\end{equation}
for a sufficiently large absolute $B_2$.  Taking
$B:=\max\{B_1,B_2\}$ proves the quantitative criterion
\eqref{eq:beta-criterion-intro}. {\color{black}Enlarging $B_1$ and $B_2$ slightly if necessary, the two absorption inequalities above may be taken with strict margin; consequently, whenever $h<1$ the resulting estimate is strict.}

Finally, write {\color{black}$p=2m+t$}.  Since
\[
 1-\frac{2m}{p}=1-\frac{2m}{2m+t}=\frac{t}{2m+t}
\]
and
\[
 m+1-\frac{2m}{p}
 =m+1-\frac{2m}{2m+t}
 =\frac{2m^2+(m+1)t}{2m+t},
\]
we obtain
\begin{equation}\label{eq:beta-additive-window}
 \beta=
 {\color{black}\frac{(m-1)t}{2m^2+(m+1)t}}.
\end{equation}
{\color{black}For fixed $m$, differentiation gives
\[
 \frac{d}{dt}
 \frac{(m-1)t}{2m^2+(m+1)t}
 =\frac{2m^2(m-1)}{[2m^2+(m+1)t]^2}>0,
\]
so} the right-hand side is increasing in {\color{black}$t$}.  If $m\ge\max\{2,A\}$ and
{\color{black}$t\ge A$}, then
\[
 m\beta
 \ge\frac{m(m-1)A}{2m^2+(m+1)A}.
\]
{\color{black}Because $m\ge2$ gives $m-1\ge m/2$, while $A\le m$ gives
\[
 2m^2+(m+1)A\le2m^2+m(m+1)\le4m^2,
\]
we obtain}
\[
 m\beta\ge\frac{(m^2/2)A}{4m^2}=\frac A8.
\]
Taking $A=8B$ and enlarging $m_0$ to include the endpoint argument therefore
proves that $p\ge2m+A$ implies $\beta\ge B/m$, completing the proof of
\Cref{thm:main-B}.}
\end{proof}

\section{Proof of the Bombieri--Weyl theorem}
\label{sec:bombieri-weyl}

{\color{black}
\[
 \|P\|_{\mathrm{BW}}=\Lambda_{m,2}(P),
 \qquad
 \Lambda_{m,q}(P)=\|a(\check P)\|_q.
\]}

{\color{black}
{\color{black}At the Hilbertian exponent, the classical Bombieri $q$-norm becomes the Bombieri--Weyl norm:} \eqref{eq:orbit-norm-definition} gives
\[
 \Lambda_{m,2}(P)^2
 =\sum_{|\alpha|=m}\frac{\alpha!}{m!}|b_\alpha|^2
 =\|P\|_{\mathrm{BW}}^2.
\]
Thus
\begin{equation}\label{eq:BW-orbit-identification}
 \Lambda_{m,2}(P)=\|P\|_{\mathrm{BW}}.
\end{equation}
Through \eqref{eq:intro-orbit-identity}, the same identity reads
$\|a(\check P)\|_2=\|P\|_{\mathrm{BW}}$.
}

\begingroup
\begin{proof}[Proof of \Cref{thm:Bombieri-Weyl-threshold}]
For the necessity of the critical scale, take
$P_n(z)=z_1^m+\cdots+z_n^m$.  Then $\|P_n\|_{\mathrm{BW}}=\sqrt n$.  If
$p\ge m$, then H\"older's inequality gives $\|P_n\|_p=n^{1-m/p}$, and hence
\[
 \frac{\|P_n\|_{\mathrm{BW}}}{\|P_n\|_p}=n^{m/p-1/2}.
\]
This diverges with $n$ whenever $m\le p<2m$.  If $p<m$, already
$\|P_n\|_p=1$, so the same conclusion follows.  Thus no dimension-free
comparison is possible for $p<2m$.

{\color{black}\hypersetup{linkcolor=black}
Let $P$ be arbitrary.  At $p=2m$ we have $q_{m,2m}=2$, so
\Cref{thm:main-A} and \eqref{eq:BW-orbit-identification} give
\[
 \|P\|_{\mathrm{BW}}
 \le\left(1+\frac{C}{m}\right)\|P\|_{2m},
\]
{\color{black}which proves the critical estimate in \Cref{thm:Bombieri-Weyl-threshold}.}
}

{\color{black}\hypersetup{linkcolor=black}
Let $m\ge m_0$ and $p\ge2m+A$.  Then \Cref{thm:main-B} applied directly
to $P$ yields
\[
 \Lambda_{m,q_{m,p}}(P)\le\|P\|_p.
\]
Since $p>2m$, formula \eqref{eq:q-isotropic-full} gives $q_{m,p}<2$.
By \eqref{eq:intro-orbit-identity}, monotonicity of finite sequence norms
therefore gives
\[
 \|P\|_{\mathrm{BW}}
 =\Lambda_{m,2}(P)
 \le\Lambda_{m,q_{m,p}}(P)
 \le\|P\|_p,
\]
{\color{black}which proves the contractive assertion in \Cref{thm:Bombieri-Weyl-threshold}.}
{\color{black}If $P\ne0$ and equality holds, then the two extreme terms in
\[
 \|a(\check P)\|_2\le\|a(\check P)\|_{q_{m,p}}\le\|P\|_p
\]
are equal.  Since $q_{m,p}<2$, equality in the first inequality forces the
ordered coefficient array $a(\check P)$ to have exactly one nonzero
coordinate.  {\color{black}Indeed, writing $x=a(\check P)$ and normalizing $\|x\|_{q_{m,p}}=1$, we have $|x_\nu|\le1$ for every $\nu$, and therefore
\[
 \|x\|_2^2=\sum_\nu|x_\nu|^2
 \le\sum_\nu|x_\nu|^{q_{m,p}}=1.
\]
Because $q_{m,p}<2$, equality can hold only when every $|x_\nu|$ is either $0$ or $1$; the normalization then leaves exactly one nonzero coordinate.}
Its symmetry then forces that coordinate to be
$(j,\ldots,j)$.  {\color{black}For if a nonzero ordered index were not diagonal, its orbit under permutations would contain at least two distinct ordered indices, and symmetry of $\check P$ would give the same nonzero coefficient at both.}
Hence $P(z)=c z_j^m$.  Conversely, every such coordinate
pure power attains equality.}  Finally, $P(z)=z_1^m$ has
\(\|P\|_{\mathrm{BW}}=\|P\|_p=1\), so the constant one is optimal.
}
\end{proof}
\endgroup

{\color{black}The bounded-window supercritical regime} also determines the equality cases.

\begin{corollary}[{\color{black}Quantitative stability beyond the bounded critical window}]
\label{cor:BW-rigidity}
{\color{black}Let $A$ and $m_0$ be as in \Cref{thm:main-B}, let
$m\ge m_0$ and $p\ge2m+A$,} {\color{black}and let $P\in\mathcal P_m(\CC^n)$ satisfy $\|P\|_p=1$.} Put
$q=q_{m,p}$ and $\delta:=1-\|P\|_{\mathrm{BW}}$.  Then
\begin{equation}\label{eq:BW-rigidity-max}
 \max_{i_1,\ldots,i_m}
 |\check P(e_{i_1},\ldots,e_{i_m})|
 \ge (1-\delta)^{2/(2-q)}.
\end{equation}
If the right-hand side of \eqref{eq:BW-rigidity-max} is larger than
$2^{-1/q}$, the maximizing ordered coefficient is necessarily a pure-power
coefficient.  In that case, for some $j$,
\begin{equation}\label{eq:BW-rigidity-distance}
 \|P-b_{me_j}z_j^m\|_{\mathrm{BW}}
 \le
 \left(1-(1-\delta)^{4/(2-q)}\right)^{1/2}.
\end{equation}
\end{corollary}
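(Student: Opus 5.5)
Write $x=a(\check P)$ for the ordered coefficient array of the symmetric polarization, so that by \eqref{eq:intro-orbit-identity} and \eqref{eq:BW-orbit-identification} we have $\|x\|_2=\|P\|_{\mathrm{BW}}=1-\delta$, and by \Cref{thm:main-B} (applicable since $m\ge m_0$, $p\ge2m+A$) together with \eqref{eq:intro-orbit-identity}, $\|x\|_q=\Lambda_{m,q}(P)\le\|P\|_p=1$. Put $\mu:=\max_\nu|x_\nu|\le1$. The first step is the elementary interpolation
\[
 \|x\|_2^2=\sum_\nu|x_\nu|^{2-q}|x_\nu|^q\le\mu^{2-q}\|x\|_q^q\le\mu^{2-q},
\]
which, since $q<2$, gives $(1-\delta)^2\le\mu^{2-q}$, i.e.\ $\mu\ge(1-\delta)^{2/(2-q)}$. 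This is \eqref{eq:BW-rigidity-max}.

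Next, suppose $\mu>2^{-1/q}$ and let $\nu_0$ attain the maximum. If $\nu_0$ were not a diagonal index $(j,\ldots,j)$, its permutation orbit would contain at least two distinct ordered indices, and symmetry of $\check P$ would place the value $x_{\nu_0}$ at both. Then $\|x\|_q^q\ge2\mu^q>1$, contradicting $\|x\|_q\le1$. Hence $\nu_0=(j,\ldots,j)$ for some $j$, and by \Cref{lem:orbit-decomposition} (orbit of size $N_{me_j}=1$) the maximizing coefficient equals $b_{me_j}$, with $|b_{me_j}|=\mu$.

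For \eqref{eq:BW-rigidity-distance}, the polynomial $P-b_{me_j}z_j^m$ has ordered array equal to $x$ with the single diagonal entry $(j,\ldots,j)$ removed; the orbit structure is unaffected because the removed entry is its own orbit. Therefore
\[
 \|P-b_{me_j}z_j^m\|_{\mathrm{BW}}^2=\|x\|_2^2-\mu^2\le1-\mu^2,
\]
using $\|x\|_2\le\|x\|_q\le1$. Inserting $\mu\ge(1-\delta)^{2/(2-q)}$ gives $1-\mu^2\le1-(1-\delta)^{4/(2-q)}$, and taking square roots yields the claim.

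None of these steps is a real obstacle; the only substantive inputs are \Cref{thm:main-B} and the orbit identity. The point requiring the most care is the exclusion of nondiagonal maximizers: it relies on the orbit of a nondiagonal index having cardinality at least two, and on the threshold $2^{-1/q}$ being exactly the level at which two equal entries of modulus $\mu$ would already exceed the unit $\ell_q$ budget.
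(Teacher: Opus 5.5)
Your proof is correct and takes essentially the same approach as the paper. It uses the interpolation $\|x\|_2^2\le\mu^{2-q}\|x\|_q^q$ for the lower bound, the observation that a nonpure orbit carries two equal entries, which forces $\mu\le2^{-1/q}$, and orthogonality of the ordered coefficient coordinates to obtain $\|P-b_{me_j}z_j^m\|_{\mathrm{BW}}^2=\|x\|_2^2-\mu^2\le1-\mu^2$.
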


\begin{proof}
{\color{black}\hypersetup{linkcolor=black}
Let $x=a(\check P)$, {\color{black}so that
$x_{i_1,\ldots,i_m}=\check P(e_{i_1},\ldots,e_{i_m})$.}  In the stated range,
\Cref{thm:main-B}, \eqref{eq:intro-orbit-identity}, and
\eqref{eq:BW-orbit-identification} give
\[
 \|x\|_2=\|P\|_{\mathrm{BW}}
 \le \|x\|_q\le\|P\|_p,
 \qquad q=q_{m,p}<2.
\]
}
For the quantitative statement, let
$M:=\max_{i_1,\ldots,i_m}|x_{i_1,\ldots,i_m}|$.  Since $\|x\|_q\le1$,
\[
 (1-\delta)^2\le\|x\|_2^2
 =\sum |x_i|^{2-q}|x_i|^q
 \le M^{2-q}\|x\|_q^q
 \le M^{2-q},
\]
which proves \eqref{eq:BW-rigidity-max}.  If a maximizing coefficient belongs
to a nonpure permutation orbit, that orbit contains at least two equal
coefficients, and therefore $1\ge\|x\|_q^q\ge2M^q$.  Thus
$M\le2^{-1/q}$, proving the second assertion.  If the maximizing coefficient
is the pure coefficient $b_{me_j}$, orthogonality of the ordered coefficient
coordinates and \eqref{eq:BW-orbit-identification} give
\[
 \|P-b_{me_j}z_j^m\|_{\mathrm{BW}}^2
 =\|x\|_2^2-|b_{me_j}|^2
 \le1-M^2,
\]
and \eqref{eq:BW-rigidity-max} yields
\eqref{eq:BW-rigidity-distance}.
\end{proof}

\section{The real case}
\label{sec:real-case}

{\color{black}In this section, $\|P\|_p$ denotes the supremum norm on the unit ball of $\ell_p^n(\mathbb R)$.}

{\color{black}The complex field is essential for the contractive phenomenon proved above.
The following elementary two-variable family shows that the Bombieri--Weyl comparison fails
to be contractive over the reals even at $p=\infty$.}

\begin{proposition}[{\color{black}Real obstruction}]
\label{prop:real-BW-obstruction}
{\color{black}Let $m=2k\ge2$ and
\[
 P_m(x,y)=(x^2-y^2)^k.
\]
Then, for every $p\in[1,\infty]$,
\[
 \|P_m\|_p=1,
 \qquad
 \|P_m\|_{\mathrm{BW}}^2
 =\frac{4^k}{\binom{2k}{k}}>1.
\]
Consequently the real Bombieri--Weyl comparison is never contractive in even
degree, for any $p$, and
\[
 \|P_m\|_{\mathrm{BW}}\sim(\pi k)^{1/4}.
\]}
\end{proposition}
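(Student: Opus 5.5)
The plan is to compute both quantities explicitly, since everything here is elementary. For the supremum norm, I would first treat $p=\infty$. For real $(x,y)$ with $|x|,|y|\le1$, both $x^2$ and $y^2$ lie in $[0,1]$, so
\[
 |x^2-y^2|\le\max\{x^2,y^2\}\le1,
\]
and hence $|P_m(x,y)|\le1$. For general $p\in[1,\infty]$, the real unit ball of $\ell_p^2$ is contained in that of $\ell_\infty^2$, so $\|P_m\|_p\le\|P_m\|_\infty\le1$. The point $(1,0)$ lies in every one of these balls and gives $P_m(1,0)=1$. Therefore $\|P_m\|_p=1$ for all $p$.

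For the Bombieri--Weyl norm, I would expand by the binomial theorem:
\[
 P_m(x,y)=\sum_{j=0}^k(-1)^j\binom kj x^{2k-2j}y^{2j}.
\]
The weight attached to the monomial $x^{2k-2j}y^{2j}$ is $\alpha!/m!=(2k-2j)!\,(2j)!/(2k)!=\binom{2k}{2j}^{-1}$. This gives
\[
 \|P_m\|_{\mathrm{BW}}^2=\sum_{j=0}^k\binom kj^2\binom{2k}{2j}^{-1}.
\]
The key algebraic step is the factorial identity
\[
 \binom kj^2\binom{2k}{2j}^{-1}=\binom{2k}{k}^{-1}\binom{2j}{j}\binom{2k-2j}{k-j}.
\]
It follows by writing both sides as $k!^2(2j)!(2k-2j)!/\bigl(j!^2(k-j)!^2(2k)!\bigr)$. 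With it, the sum reduces to the classical convolution
\[
 \sum_{j=0}^k\binom{2j}{j}\binom{2k-2j}{k-j}=4^k.
\]
I would prove this convolution by squaring the generating function $\sum_j\binom{2j}{j}t^j=(1-4t)^{-1/2}$, which yields $(1-4t)^{-1}=\sum_k4^kt^k$. This gives $\|P_m\|_{\mathrm{BW}}^2=4^k/\binom{2k}{k}$.

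Finally, $\binom{2k}{k}<\sum_i\binom{2k}{i}=4^k$ for $k\ge1$, because the other terms in the sum are positive. So $\|P_m\|_{\mathrm{BW}}>1=\|P_m\|_p$, and no real contractive comparison can hold in even degree for any $p$. Stirling's formula gives $\binom{2k}{k}\sim4^k/\sqrt{\pi k}$, hence $\|P_m\|_{\mathrm{BW}}^2\sim\sqrt{\pi k}$, which is the stated asymptotic $(\pi k)^{1/4}$.

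The only nonroutine step is recognizing the factorial identity that turns the weighted sum into the central-binomial convolution. Once that identity is in place, everything else is standard.
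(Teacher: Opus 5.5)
Your proof is correct and follows essentially the same route as the paper. Both arguments bound $|x^2-y^2|^k$ by $1$ on the real unit ball with equality at $(1,0)$, expand binomially, and use the identity $\binom kj^2\binom{2k}{2j}^{-1}=\binom{2k}{k}^{-1}\binom{2j}{j}\binom{2k-2j}{k-j}$ to reduce to the central-binomial convolution, then finish with $\binom{2k}{k}<4^k$ and Stirling. The only differences are that you verify the factorial identity explicitly and prove the convolution via $(1-4t)^{-1/2}$ squared, whereas the paper treats both as standard.
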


\begin{proof}
{\color{black}Since $|x|,|y|\le1$ on the unit ball of $\ell_p^2(\mathbb R)$,
\[
 |P_m(x,y)|=|x^2-y^2|^k\le1,
\]
and equality holds at $(1,0)$.  Hence \(\|P_m\|_p=1\).
Expanding,
\[
 P_m(x,y)=\sum_{j=0}^k(-1)^j\binom{k}{j}x^{2k-2j}y^{2j},
\]
so the definition of the Bombieri--Weyl norm gives
\[
 \|P_m\|_{\mathrm{BW}}^2
 =\sum_{j=0}^k\frac{\binom{k}{j}^2}{\binom{2k}{2j}}.
\]
For each $j$,
\[
 \frac{\binom{k}{j}^2}{\binom{2k}{2j}}
 =\frac{\binom{2j}{j}\binom{2k-2j}{k-j}}{\binom{2k}{k}}.
\]
Therefore the central-binomial convolution yields
\[
 \|P_m\|_{\mathrm{BW}}^2
 =\frac1{\binom{2k}{k}}
   \sum_{j=0}^k\binom{2j}{j}\binom{2k-2j}{k-j}
 =\frac{4^k}{\binom{2k}{k}}.
\]
The strict inequality follows from \(\binom{2k}{k}<4^k\), while
\(\binom{2k}{k}\sim4^k/\sqrt{\pi k}\) gives the final asymptotic formula.}
\end{proof}

{\color{black}\noindent It is natural to ask whether an analogous obstruction persists for the real symmetric multilinear Bohnenblust--Hille inequality. Denote by $B^{\mathrm{sym}}_{\mathbb R,m}$ its optimal constant in degree $m$. In degree two take
\[
 T_2(x,y)=\frac12\bigl(x_0y_0+x_0y_1+x_1y_0-x_1y_1\bigr).
\]
Thus $T_2$ is symmetric, $\|T_2\|=1$, and its four coefficients have modulus $1/2$, giving the Bohnenblust--Hille quotient $\sqrt2$.  In degree four define directly
\[
 T_4(e_{i_1},e_{i_2},e_{i_3},e_{i_4})
 =\frac14(-1)^{1+\binom{i_1+i_2+i_3+i_4+1}{2}},
 \qquad i_1,i_2,i_3,i_4\in\{0,1\}.
\]
For weights $0,1,2,3,4$ the signs are respectively $-,+,+,-,-$.  Again $\|T_4\|=1$ and all sixteen coefficients have modulus $1/4$, so the quotient is again $\sqrt2$.  The same construction, using the classical Walsh system; see, for instance, \cite{FineWalsh}, gives in every even degree a symmetric $m$-linear form $T_m:(\ell_\infty^2)^m\to\mathbb R$ with $\|T_m\|=1$ and whose $2^m$ ordered coefficients all have modulus $2^{-m/2}$.  Therefore, with $q_m=2m/(m+1)$,
\[
\left(2^m(2^{-m/2})^{q_m}\right)^{1/q_m}=\sqrt2,
\]
and consequently
\[
 B^{\mathrm{sym}}_{\mathbb R,m}\ge\sqrt2\qquad(m\ \text{even}).
\]
This symmetric construction is related to the real lower-bound mechanism in \cite{DinizMunozPellegrinoSeoane}.}

\section*{Acknowledgments}

\noindent\textit{Funding.}
The research of D. N\'u\~nez-Alarc\'on, D. M. Pellegrino, and
A. Raposo Jr. was supported in part by CNPq Grants 406457/2023-9
(CNPq/MCTI Call 10/2023) and 403964/2024-5
(MCTI/CNPq Call 16/2024). D. M. Pellegrino was also supported by CNPq
Grant 305807/2025-0, and A. Raposo Jr. by CNPq Grant 302341/2025-0.
E. V. Teixeira acknowledges support from the Grayce B. Kerr Chair at
Oklahoma State University and partial support from the DARPA ExpMath
program under Agreement No.~HR0011262E029.

\medskip

\noindent\textit{AI-assisted verification.}
{\color{black}AI-assisted tools were used for exploratory calculations, consistency
checks, and editorial assistance.  The manuscript was written and verified by
the authors, who retain full responsibility for its mathematical content.}

\end{document}